\newtheorem{theorem}{Theorem}[section]
\newtheorem{proposition}[theorem]{Proposition}
\newtheorem{definition}[theorem]{Definition}
\newtheorem{lemma}[theorem]{Lemma}
\newtheorem{remark}[theorem]{Remark}
\DeclareMathOperator{\diag}{diag}
\DeclareMathOperator{\Div}{div}
\newcommand{\R}{\mathbb{R}}
\DeclareMathOperator{\sgn}{sgn}
\numberwithin{equation}{section}
\title{\bf A strong-form stability for a class of $L^p$ Caffarelli-Kohn-Nirenberg  interpolation inequality\thanks{Supported by National Key R\&D Program of China (Grant 2023YFA1010001) and NSFC(12171265,12271184). E-mail address:
zhangyf22@mails.tsinghua.edu.cn (Zhang), zou-wm@mail.tsinghua.edu.cn (Zou)} }
\author{{\bf Yingfang Zhang, Wenming Zou}  \\ {\footnotesize \it  Department of Mathematical Sciences, Tsinghua University, Beijing 100084, China.} }
\date{}
\begin{document}
\maketitle

\begin{abstract}
\noindent We study the stability of a class of Caffarelli-Kohn-Nirenberg (CKN) interpolation inequality   and establish a strong-form stability as following:
    \begin{equation*}
        \inf_{v\in\mathcal{M}_{p,a,b}}\frac{ \|u-v\|_{H_b^p} \|u-v\|_{L^p_a}^{p-1} }{\|u\|_{H^p_b}\|u\|_{L^p_a}^{p-1}} \le C\delta_{p,a,b}(u)^{t},
    \end{equation*}
where $t=1$ for $p=2$ and $t=\frac{1}{p}$ for $p>2$, and $\delta_{p,a,b}(u)$ is deficit of the CKN. We also note that it is impossible to establish stability results for $\|\cdot\|_{H_b^p}$ or $\|\cdot\|_{L_a^p}$ separately. Moreover, we consider the second-order CKN inequalities and establish similar results for radial functions.

\vskip0.1in

\noindent\textbf{Keywords:}  Stability;  Caffarelli-Kohn-Nirenberg (CKN) inequality;  Minimizers.

\vskip0.1in
\noindent{\bf 2020 Mathematics Subject Classification:} Primary 46E35(Secondary 26D10);  35J61;  35B50.

\end{abstract}

\section{Introduction}
Recall the Caffarelli-Kohn-Nirenberg (CKN) inequality:
\begin{theorem}[\cite{CKN1984}]
    For $N\ge 1$, let $s,p,q,a,b,c$ and $\theta$ satisfy the following conditions:
    \begin{align*}
        & p,q\ge 1,\quad s>0,\quad 0\le \theta\le 1;\\
        & \frac 1p+\frac{b}{N}>0,\quad \frac 1q + \frac{a}{N}>0, \quad \frac 1s + \frac{c}{N}>0;\\
        & \frac 1s + \frac {c}{N} = \theta\left(\frac{1}{p} + \frac{b-1}{N}\right) + (1-\theta)\left(\frac 1q + \frac{a}{N}\right);\\
        & c \le \theta b + (1-\theta)a; \\
        & \frac 1s \le \frac{\theta}{p} + \frac{1-\theta}{q},\quad \text{if}\ \theta =0 \ \text{or}\  \theta=1 \ \text{or} \ \frac 1s + \frac{c}{N} = \frac 1p + \frac{b-1}{N} = \frac 1q + \frac{a}{N}.
    \end{align*}
    Then there exists some positive constant $S$ such that
    \begin{equation}\label{CKN111}
        \big\| |x|^b \nabla u\big\|_{L^p(\R^N)}^\theta\ \big\| |x|^a u\big\|_{L^q(\R^N)}^{1-\theta} \ge S\big\| |x|^c u\big\|_{L^s(\R^N)}
    \end{equation}
    holds for all $u\in C_c^1(\R^N)$.
\end{theorem}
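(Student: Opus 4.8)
The plan is as follows. By density it suffices to prove \eqref{CKN111} for $u\in C_c^1(\R^N)$; the positivity conditions $\frac1p+\frac bN>0$, $\frac1q+\frac aN>0$ and $\frac1s+\frac cN>0$ are exactly what makes the three weighted norms in \eqref{CKN111} finite on such $u$, while the balance identity
\[
\frac 1s + \frac {c}{N} = \theta\Big(\frac{1}{p} + \frac{b-1}{N}\Big) + (1-\theta)\Big(\frac 1q + \frac{a}{N}\Big)
\]
is precisely the requirement that the two sides of \eqref{CKN111} scale the same way under the dilations $u\mapsto u(\lambda\,\cdot)$, $\lambda>0$ (homogeneity under $u\mapsto tu$ being automatic since $\theta+(1-\theta)=1$). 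Thus the hypotheses are necessary, and only the existence of $S>0$ needs an argument.

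For the principal range I would argue directly on $\R^N$. From the pointwise identity
\[
\Div\!\big(x\,|x|^{cs}|u|^{s}\big)=(N+cs)\,|x|^{cs}|u|^{s}+s\,|x|^{cs}|u|^{s-1}\sgn(u)\,(x\cdot\nabla u),
\]
integrated over $\R^N$ (the left side integrates to $0$, the spherical boundary terms near the origin vanishing because $N+cs>0$), and the bound $|x\cdot\nabla u|\le|x|\,|\nabla u|$, one obtains
\[
(N+cs)\,\big\||x|^c u\big\|_{L^s}^{s}\ \le\ s\int_{\R^N}|x|^{cs+1}\,|u|^{s-1}\,|\nabla u|\,dx .
\]
I would then write the integrand as $\big(|x|^b|\nabla u|\big)\big(|x|^a|u|\big)^{\lambda}\big(|x|^c|u|\big)^{\kappa}$ with $\lambda=\tfrac{1-\theta}{\theta}$ and $\kappa=s-1-\lambda$, and apply Hölder's inequality with the exponents $p$, $q/\lambda$ and $s/\kappa$. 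A direct computation shows that, in the regime $0<\theta<1$, $\kappa>0$ and $c=\theta(b-1)+(1-\theta)a$ (the equality case of the balance identity), the matching of the powers of $|\nabla u|$, of $|u|$ and of $|x|$, together with $\frac1p+\frac\lambda q+\frac\kappa s=1$, is equivalent to the hypotheses of the theorem; dividing by $\||x|^cu\|_{L^s}^{\kappa}$ and using $s-\kappa=1+\lambda=\tfrac1\theta$ and $\theta\lambda=1-\theta$ yields \eqref{CKN111} with $S=\big((N+cs)/s\big)^{\theta}$. The limiting case $\theta=1$, $s=p$ reduces to the weighted Hardy inequality $\||x|^b\nabla u\|_{L^p}\ge S\||x|^{b-1}u\|_{L^p}$, obtained from a two-factor version of the same argument, and $\theta=0$ is trivial.

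For the remaining configurations I would pass to dimension one. For radial $u$, polar coordinates turn \eqref{CKN111} into a one-dimensional interpolation inequality on $(0,\infty)$ whose weights are powers of $r$; this can be established by combining the pointwise bound $|u(r)|\le\int_r^\infty|u'(t)|\,dt$ with weighted Hardy inequalities on the half-line and Hölder interpolation, the admissible range of exponents being governed precisely by $c\le\theta b+(1-\theta)a$ (and, in the borderline cases, by the extra hypothesis $\frac1s\le\frac\theta p+\frac{1-\theta}q$). A general $u$ is reduced to the radial situation by replacing $u$ on each sphere $\{|x|=r\}$ by its $L^q$-norm $g(r):=\|u(r\,\cdot)\|_{L^q(S^{N-1})}$ over the sphere (or, when the ordering of $p$, $q$, $s$ requires it, by another adapted spherical norm), controlling $|g'(r)|$ by the spherical $L^p$-norm of $\partial_r u$ via Minkowski's inequality (and, when $s$ is too large for the embedding $L^s(S^{N-1})\hookrightarrow L^q(S^{N-1})$, by first invoking the Sobolev inequality on $S^{N-1}$ and absorbing the resulting angular gradient into $|\nabla u|$). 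I expect the main obstacle to be exactly this last reduction together with the accompanying exponent bookkeeping: one cannot use a symmetric rearrangement, since the sharp constant is not always attained by radial functions, so the passage from $u$ to $g$ must be carried out by hand with $N$-dependent constants; and covering the whole admissible parameter set---in particular the degenerate configurations where the hypothesis $\frac1s\le\frac\theta p+\frac{1-\theta}q$ is indispensable, and the case in which $p$, $q$ and $s$ are mutually distinct---requires a careful case analysis to ensure that every Hölder splitting and every weighted Hardy inequality invoked is actually valid.
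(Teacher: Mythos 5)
The paper itself offers no proof of this statement: it is the classical Caffarelli--Kohn--Nirenberg interpolation inequality, quoted as background directly from \cite{CKN1984}, so there is no internal argument to compare your sketch against; it has to be judged on its own terms. On those terms, the first half is sound but covers much less than you suggest. The divergence identity for $\Div\big(x\,|x|^{cs}|u|^{s}\big)$ plus the three-factor H\"older splitting works only when the exponents match exactly, i.e. when $c=\theta(b-1)+(1-\theta)a$, which by the balance identity is the same as $\frac1s=\frac\theta p+\frac{1-\theta}q$ (and even then you need $\kappa=s-1-\frac{1-\theta}{\theta}\ge0$, which is a further restriction on the parameters, not a consequence of the hypotheses). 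This is the Hardy-type ``diagonal'' case, the easy part of the theorem, not its principal range.

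The genuine content of CKN lies in the regime $\theta(b-1)+(1-\theta)a<c\le\theta b+(1-\theta)a$, where the balance identity forces $\frac1s<\frac\theta p+\frac{1-\theta}q$, i.e. a strict gain of integrability; with $\theta=1$, $b=c=0$, $s=\frac{Np}{N-p}$ this already contains the ordinary Sobolev inequality, which no pointwise divergence identity plus H\"older can deliver. For exactly these cases your proposal only states a plan --- pass to one dimension for radial functions, replace a general $u$ by spherical norms $g(r)=\|u(r\,\cdot)\|_{L^q(S^{N-1})}$, control $g'$ by Minkowski, invoke the Sobolev inequality on $S^{N-1}$ and ``absorb'' the angular gradient, and sort out the borderline configurations where $\frac1s\le\frac\theta p+\frac{1-\theta}q$ is needed --- and you yourself flag this reduction and its exponent bookkeeping as the unresolved obstacle. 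None of it is carried out, no case analysis is given, and it is precisely there that the work of \cite{CKN1984} lies. So as it stands the proposal proves only a special Hardy-type subcase and leaves the heart of the theorem as an intention rather than an argument.
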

The largest $S$ that satisfies \eqref{CKN111} is its best constant, and a function that takes the equality is called a minimizer.

\subsection{Brief history of the stability for  CKN inequalities}
We are concerned with the stability of it --- can the {\it deficit } of $u$:
\[ \delta(u) \coloneqq \||x|^b\nabla u\|_{L^p}^\theta\||x|^au\|_{L^q}^{1-\theta} - S\||x|^cu\|_{L^s}\]
control  some distance of $u$ from the set of minimizers? Our quest to address this question unfolds in two primary ways: firstly, we need to figure out the best constant and the structure of the set of minimizers. Subsequently, we embark on identifying and employing suitable methods to estimate the distance.

\vskip0.1in
The question on the stability of functional inequalities traced back to Br\'{e}zis and Lieb \cite{Brezis1985}, and the initial result of the Sobolev inequalities (for the case $p=2$) was given by Bianchi and Egnell in \cite{Bianchi1991}. The quantitative form of the $L^p$ Sobolev inequality with $p\neq 2$ was proved by \cite{Cianchi2009, Figalli2018}. For more detailed results about the Sobolev-type inequalities, see \cite{Figalli2022, Chen2013, Gazzola2010, Neumayer2020, Talenti1976, Figalli2013, Cianchi2006, Fusco2007}.

\vskip0.1in
 When $\theta=1$, in the special case,  only the term of derivative remains on the left, and this is known as weighted Sobolev inequality. Fortunately, some methods of studying the stability of the Sobolev inequality remain applicable in this case, and there are already many stability results.  Chou and Chu \cite{Chou1993} gave the   best constant and minimizers for $p=2,\, (b,c)\neq (0,0)$. Based on their work, Wei and Wu \cite{Wei2022} established some stability results in this case. For $p\neq 2$, the explicit best constant was given by Horiuchi in \cite{Toshio1997}. See Zhou and Zou \cite{Zhou2024} and the references therein for stability results in this case.

\vskip0.1in

When it comes to the case $\theta\neq 1$, the inclusion of the interpolation terms in the CKN inequality makes it difficult to study. When $(a,b,c)=(0,0,0)$, it becomes the renowned Gagliardo-Nirenberg-Sobolev (GNS) inequality. Since there are no singular terms, we can use some rearrangement and symmetrization tricks to get its best constants and minimizers. Del Pino and Dolbeault \cite{DelPino2002} gave the best constant and minimizers for $p=2$, and \cite{Cordero-Erausquin2004} gave another proof by using mass transportation. Carlen and Figalli \cite{Carlen2013} used dimension reduction method to derive a stability result for the GNS inequality in $\R^2$ by a result for Sobolev inequality in $\R^4$. Later, Nguyen \cite{Nguyen2019} and Seuffert \cite{Seuffert2016} generalized their work.

\vskip0.1in

The case $\theta\neq 1,\, (a,b,c)\neq (0,0,0)$ is perhaps the most challenging one. Until recently, stability results in this case had not been extensively explored. In \cite{McCurdy2021},  McCurdy and Venkatraman gave a stability result for $(p,q,s)=(2,2,2),\, (a,b,c) = (1,0,0)$. Specifically, there exist constants $C_1>0,\,C_2(N) >0$ such that for all $u\in W^{1,2}(\R^N)$ such that $|x|u\in L^2(\R^N)$, the following inequality holds:
\begin{align*}
    \|\nabla u\|_{L^2(\R^N)}^2\||x|u\|_{L^2(\R^N)}^2 - \frac{N^2}{4}\|u\|_{L^2(\R^N)}^4 \ge C_1\|u\|_{L^2(\R^N)}d_1(u,E)^2 + C_2(N)d_1(u,E)^4,
\end{align*}
where $E=\{ce^{-\alpha|x|^2}:c\in\R,\,\alpha>0\}$ is the set of minimizers, and $d_1(u,E)$ denotes the distance from $u$ to $E$ under $L^2$-norm. Fathi \cite{Fathi2021} gave a short proof with explicit constants $(C_1,C_2)=(\frac 14, \frac 1{16})$. Further advancements were made by Cazacu, Flynn, Lam and Lu \cite{Cazacu2024}, which established an identity and used it to obtain the optimal constants $(C_1,C_2) = (N,1)$. Do, Flynn, Lam and Lu \cite{Do2023} followed their methods and generalized their result to the following $L^p$-CKN inequality:
\begin{theorem}[{\cite[Corollary 1.2]{Do2023}}]
    Let $N\ge 1,\, p>1,\, b+1-a>0$ and $b\le \frac{N-p}{p}$. Then for all $u\in C_0^\infty(\R^N\backslash\{0\})$,
    \begin{equation}\label{CKN}
        \left(\int_{\R^N} \frac{|\nabla u|^p}{|x|^{pb}}\,dx\right)^{1/p}\left(\int_{\R^N} \frac{|u|^p}{|x|^{pa}}\,dx\right)^{\frac{p-1}{p}} \ge \frac{N-1-(p-1)a-b}{p} \int_{\R^N} \frac{|u|^p}{|x|^{(p-1)a+b+1}}\,dx.
    \end{equation}
\end{theorem}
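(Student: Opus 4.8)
The plan is to obtain \eqref{CKN} from a single integration by parts against a radial vector field, followed by one application of Hölder's inequality --- the standard route for Hardy-type inequalities with sharp constant. Write $\gamma \coloneqq (p-1)a+b+1$, so that the right-hand side of \eqref{CKN} equals $\frac{N-\gamma}{p}\int_{\R^N}|x|^{-\gamma}|u|^p\,dx$, and recall the elementary identity $\Div\bigl(x\,|x|^{-\gamma}\bigr) = (N-\gamma)\,|x|^{-\gamma}$ on $\R^N\setminus\{0\}$. Before anything else I would check that $N-\gamma>0$ under the stated hypotheses, so that the constant is positive and the inequality is nonvacuous: from $b\le\frac{N-p}{p}$ one gets $p(b+1)\le N$, and from $a<b+1$ one gets $(p-1)a+b<(p-1)(b+1)+b = p(b+1)-1\le N-1$, i.e.\ $N-\gamma = N-1-(p-1)a-b>0$.

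For $u\in C_0^\infty(\R^N\setminus\{0\})$ the function $|u|^p$ is $C^1$ with $\nabla|u|^p = p\,|u|^{p-2}u\,\nabla u$ (using $p>1$), and it is compactly supported away from the origin, so there are no boundary terms. Multiplying the divergence identity by $|u|^p$, integrating, and integrating by parts gives
\begin{align*}
(N-\gamma)\int_{\R^N}\frac{|u|^p}{|x|^{\gamma}}\,dx
&= \int_{\R^N}|u|^p\,\Div\bigl(x\,|x|^{-\gamma}\bigr)\,dx
= -\,p\int_{\R^N}\frac{|u|^{p-2}u\,(x\cdot\nabla u)}{|x|^{\gamma}}\,dx\\
&\le p\int_{\R^N}\frac{|u|^{p-1}\,|\nabla u|}{|x|^{\gamma-1}}\,dx,
\end{align*}
where the last step uses the pointwise bound $|x\cdot\nabla u|\le |x|\,|\nabla u|$.

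The final step is to factor the weight as $|x|^{\gamma-1}=|x|^{b}\,|x|^{(p-1)a}$, split the integrand accordingly, and apply Hölder's inequality with conjugate exponents $p$ and $\frac{p}{p-1}$ (noting $(p-1)\cdot\frac{p}{p-1}=p$):
\begin{equation*}
\int_{\R^N}\frac{|u|^{p-1}\,|\nabla u|}{|x|^{\gamma-1}}\,dx
= \int_{\R^N}\frac{|\nabla u|}{|x|^{b}}\cdot\frac{|u|^{p-1}}{|x|^{(p-1)a}}\,dx
\le \Bigl(\int_{\R^N}\frac{|\nabla u|^p}{|x|^{pb}}\,dx\Bigr)^{\!1/p}\Bigl(\int_{\R^N}\frac{|u|^p}{|x|^{pa}}\,dx\Bigr)^{\!\frac{p-1}{p}}.
\end{equation*}
Combining the two displays and dividing by $N-\gamma>0$ yields \eqref{CKN}.

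There is no serious obstacle in this argument; the only points needing care are the justification of the integration by parts (covered by compact support in $\R^N\setminus\{0\}$ and the $C^1$-regularity of $|u|^p$ for $p>1$, with a routine density argument if $1<p<2$) and the bookkeeping showing $N-\gamma>0$ exactly under the hypotheses $b+1-a>0$ and $b\le\frac{N-p}{p}$. The genuinely hard issues --- deciding when equality holds (equivalently, the structure of the minimizer set) and then controlling the deficit --- are not needed for the inequality itself and form the substance of the rest of the paper.
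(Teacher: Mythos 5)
Your proof is correct: the constant bookkeeping ($N-1-(p-1)a-b>0$ follows exactly from $a<b+1$ and $pb\le N-p$), the divergence identity, the integration by parts (legitimate since $|u|^p\in C^1$ for $p>1$ and the support avoids the origin), and the Hölder split $|x|^{\gamma-1}=|x|^b|x|^{(p-1)a}$ are all in order. Note, however, that the paper does not prove this statement at all — it is quoted verbatim from Do–Flynn–Lam–Lu, and their proof (in the spirit of Cazacu–Flynn–Lam–Lu, and of the second-order identity \eqref{identity-I of 2nd order} reproduced in the paper) proceeds not by your divergence-plus-Hölder estimate but by establishing an exact \emph{identity}: the difference between the two sides of \eqref{CKN} is written as an integral of the nonnegative remainder $\mathcal{R}_p(s,t)=|t|^p+(p-1)|s|^p-p|s|^{p-2}st$ evaluated on a suitable pair built from $x\cdot\nabla u$ and $u$. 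Your route is more elementary and suffices for the inequality with the sharp constant, but it discards the error terms at the two inequality signs (Cauchy–Schwarz pointwise and Hölder), whereas the identity approach keeps them; that extra information is what yields the characterization of the minimizer set $\mathcal{M}_{p,a,b}$ and the weak stability estimate \eqref{weak stability}, on which the rest of the paper (Proposition \ref{min}, Theorems \ref{main p=2}--\ref{plus version}) crucially depends. So your argument is a valid independent proof of \eqref{CKN} itself, but not a substitute for the cited machinery used downstream.
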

They also obtain the following  weak stability results for some special $(p,a,b)$:
\begin{theorem}[{\cite[Theorem 1.9]{Do2023}}]\label{Do's Thm}
    Let $p\ge 2,\, 0\le b <\frac{N-p}{p},\, a<\frac{Nb}{N-p}$, and $(p-1)a+b+1=\frac{pbN}{N-p}$. There exists a universal constant $C(N,p,a,b)>0$ such that for all $u\in C_0^\infty(\R^N\backslash\{0\})$,
    \begin{equation}\label{weak stability}
        \begin{aligned}
            & \left(\int_{\R^N} \frac{|\nabla u|^p}{|x|^{pb}}\,dx\right)^{1/p}\left(\int_{\R^N} \frac{|u|^p}{|x|^{pa}}\,dx\right)^{\frac{p-1}{p}} - \frac{N-1-(p-1)a-b}{p} \int_{\R^N} \frac{|u|^p}{|x|^{(p-1)a+b+1}}\,dx\\
            \ge{}& C(N,p,a,b) \inf_{v\in \mathcal M_{p,a,b}} \int_{\R^N} \frac{|u-v|^p}{|x|^{(p-1)a+b+1}}\,dx,
        \end{aligned}
    \end{equation}
    where its set of minimizers is
    \[ \mathcal{M}_{p,a,b} =\left\{k\exp\left(-\frac{\lambda}{b+1-a}|x|^{b+1-a}\right):k\in \R,\,\lambda>0\right\}.\]
\end{theorem}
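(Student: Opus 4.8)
The plan is to quantify the proof of \eqref{CKN} step by step. Replacing $u$ by $|u|$ changes neither the three integrals $A:=\int|x|^{-pb}|\nabla u|^p\,dx$, $B:=\int|x|^{-pa}|u|^p\,dx$, $D:=\int|x|^{-s}|u|^p\,dx$ (with $s:=(p-1)a+b+1$) nor the deficit $\delta(u)=A^{1/p}B^{(p-1)/p}-\tfrac{N-s}{p}D$, while it can only decrease $\inf_{v\in\mathcal M_{p,a,b}}\int|x|^{-s}|u-v|^p\,dx$; so we may assume $u\ge0$. Now \eqref{CKN} follows from the divergence identity $(N-s)D=-p\int|x|^{-(s-1)}u^{p-1}\,\partial_r u\,dx$ (integrate $|u|^p$ against $\Div(x|x|^{-s})$), then $-\partial_r u\le|\partial_r u|\le|\nabla u|$, then H\"older with exponents $(p,p')$, $p'=\tfrac{p}{p-1}$, on $|x|^{-b}|\nabla u|$ and $|x|^{-(p-1)a}u^{p-1}$. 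This exhibits the deficit as a sum of two nonnegative remainders,
\[ \delta(u)=\Bigl(A^{1/p}B^{\frac{p-1}{p}}-\int\frac{u^{p-1}|\nabla u|}{|x|^{s-1}}\,dx\Bigr)+\int\frac{u^{p-1}}{|x|^{s-1}}\bigl(|\nabla u|+\partial_r u\bigr)\,dx=:R_1(u)+R_2(u), \]
with $R_1(u)=0$ exactly when $|x|^{-pb}|\nabla u|^p$ is proportional to $|x|^{-pa}u^p$, and $R_2(u)=0$ exactly when $u$ is radial and non-increasing in $|x|$; together these force $u\in\mathcal M_{p,a,b}$. The target is $R_1(u)+R_2(u)\gtrsim\inf_v\int|x|^{-s}|u-v|^p\,dx$.

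First I would treat radial non-increasing $u$, for which $R_2(u)=0$ and the whole deficit equals the H\"older gap $R_1(u)$. The elementary inequality $\tfrac1p x^p+\tfrac1{p'}y^{p'}-xy\ge\tfrac1p\bigl|x-y^{1/(p-1)}\bigr|^p$ (valid for $p\ge2$, $x,y\ge0$) upgrades the H\"older step to a quantitative one: with $\Lambda:=(A/B)^{1/p}$,
\[ R_1(u)\gtrsim\frac{B^{1/p}}{A^{(p-1)/p}}\int_0^\infty\bigl|\,\partial_r u+\Lambda\,r^{b-a}u\,\bigr|^p\,r^{N-1-s}\,dr, \]
that is, $u$ satisfies the ODE $\partial_r u=-\Lambda r^{b-a}u$ defining $\mathcal M_{p,a,b}$ in an integrated $L^p$ sense. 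Using the integrating factor $\exp(\tfrac{\Lambda}{b+1-a}r^{b+1-a})$ together with a weighted Hardy estimate, one upgrades this to $\int|x|^{-s}|u-w|^p\,dx\lesssim R_1(u)$, where $w(x)=k\exp(-\tfrac{\Lambda}{b+1-a}|x|^{b+1-a})\in\mathcal M_{p,a,b}$ and $k$ is chosen to minimise the left-hand side. This chain is \emph{linear} in $\delta(u)$ and yields exactly the $p$-th power of the weighted $L^p$-distance, as claimed. (One may also organise the radial step around the radial stretching $y=|x|^{\alpha-1}x$, $\alpha=1-\tfrac{pb}{N-p}$; the hypothesis $(p-1)a+b+1=\tfrac{pbN}{N-p}$ is exactly what makes this map send \eqref{CKN} on radial functions to the $b=c=0$ member of the CKN family, removing the weights from both the gradient and the lowest-order terms simultaneously.)

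It remains to pass from radial $u$ to general $u$. Writing $\bar u$ for the spherical average of $u$, a Poincar\'e inequality on $\mathbb S^{N-1}$ bounds $\int|x|^{-s}|u-\bar u|^p\,dx$ by an angular energy, and $\bar u$ can then be compared with a radial non-increasing profile to which the previous step applies; the triangle inequality closes the argument provided one controls $\int|x|^{-s}|u-\bar u|^p\,dx$ by $R_2(u)$. This last control is the main obstacle. When $p=2$ it is precisely where Cazacu--Flynn--Lam--Lu produce an algebraic identity in which the non-radial contribution appears manifestly nonnegative and proportional to the angular energy, so the whole proof collapses into one computation; for $p\neq2$ no such identity is available, and the angular part of $R_2$, which on $\{\partial_r u<0\}$ equals $\int u^{p-1}|x|^{-(s-1)}\,\frac{|\nabla_\omega u|^2}{r^2(|\nabla u|+|\partial_r u|)}\,dx$, does \emph{not} dominate $\int|\nabla_\omega u|^p$ pointwise: the ratio degenerates wherever the radial derivative dominates. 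Overcoming this should require a more careful region decomposition (separating where the angular versus radial derivatives dominate and using near-equality in the H\"older step to replace the factor $u^{p-1}$ by the corresponding Gaussian profile), with the bookkeeping arranged so that the final bound remains linear in $\delta(u)$. A secondary technical point is the weighted Hardy/Gr\"onwall estimate in the radial step, which needs care near the origin and uses the integrability encoded in $b<\tfrac{N-p}{p}$ and $a<\tfrac{Nb}{N-p}$.
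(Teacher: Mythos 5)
This theorem is not proved in the paper at all: it is quoted verbatim from [Do2023, Theorem 1.9] and used as a black box (the closest in-paper analogue of its proof is the Section 4 argument for Theorem \ref{main I - 2nd order}, which runs on the identity \eqref{identity-I of 2nd order} plus the weighted Poincar\'e inequality of Lemma \ref{Lemma Poincare}). Measured against that route, your radial step is sound in spirit (quantified Young gap, integrating factor, weighted Hardy/Poincar\'e), but the proposal has a genuine gap which you yourself flag and do not close: the passage from radial to general $u$. The scalar decomposition $\delta(u)=R_1(u)+R_2(u)$, with $R_1$ the H\"older gap for the \emph{modulus} $|\nabla u|$ and $R_2=\int |x|^{1-\sigma}u^{p-1}(|\nabla u|+\partial_r u)\,dx$, discards exactly the information needed: the quantified Young gap only controls $\int |x|^{-pb}\bigl| |\nabla u|-\Lambda |x|^{b-a}u \bigr|^p dx$, whereas the distance to $\mathcal{M}_{p,a,b}$ requires $\int |x|^{-pb}\bigl| \nabla u+\Lambda |x|^{b-a}u\,\tfrac{x}{|x|} \bigr|^p dx$, and, as you correctly observe, $R_2$ does not dominate the missing angular energy (the ratio degenerates where the radial derivative dominates). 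The promised ``more careful region decomposition'' is not supplied, so the theorem is not established for general $u$; only the radial monotone case is (sketchily) complete.

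The fix --- and the actual proof in [Do2023] --- is to never take moduli. With $\lambda=(A/B)^{1/p}$, $s=-\lambda^{1/p}|x|^{-a}u\,\tfrac{x}{|x|}$ and $t=\lambda^{-\frac{p-1}{p}}|x|^{-b}\nabla u$ (vectors), one has the \emph{exact} identity $A^{1/p}B^{\frac{p-1}{p}}-\tfrac{N-\sigma}{p}D=\tfrac1p\int \mathcal{R}_p(s,t)\,dx$, where $\mathcal{R}_p(s,t)=|t|^p+(p-1)|s|^p-p|s|^{p-2}s\cdot t$; the cross term integrates by parts into $-\tfrac{N-\sigma}{p}D$ exactly as in your divergence identity. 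So your assertion that no such identity is available for $p\neq 2$ is wrong: this is the main point of [Do2023], and the second-order analogue is displayed in the paper as \eqref{identity-I of 2nd order}. Then $\mathcal{R}_p(s,t)\ge m_p|s-t|^p$ for $p\ge 2$ gives $\delta(u)\gtrsim \lambda^{1-p}\int |x|^{-pb}e^{-p\Phi}\bigl|\nabla\bigl(e^{\Phi}u\bigr)\bigr|^p dx$ with $\Phi=\tfrac{\lambda}{b+1-a}|x|^{b+1-a}$, i.e. the full gradient including its angular part, and the weighted Gaussian Poincar\'e inequality of Lemma \ref{Lemma Poincare} (valid for all, not only radial, functions) with $\mu=pb$ yields $\inf_c\int |x|^{-\frac{N\mu}{N-p}}|u-ce^{-\Phi}|^p dx$; the structural hypothesis $(p-1)a+b+1=\tfrac{pbN}{N-p}$ is precisely the condition $\tfrac{N\mu}{N-p}=pc$ that makes this the $L^p_c$-distance to $\mathcal{M}_{p,a,b}$. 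This bypasses the radial reduction, the treatment of $R_2$, and the sign/symmetrization step altogether; your radial analysis is essentially the radial shadow of this argument, but as written the proposal does not prove the stated theorem.
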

\begin{theorem}[{\cite[Theorem 1.8]{Do2023}}]\label{Do's Thm 2}
    Let $\frac{N-2}{2} < b \le N-2$ and $N(b-a+3) = 2(3b-a+3)$. There exists a universal constant $C(N,a,b)>0$ such that for all $u\in C_0^\infty(\R^N\backslash\{0\})$,
    \begin{equation}\label{weak stability 2}
        \begin{aligned}
            &\left(\int_{\R^N} \frac{|\nabla u|^2}{|x|^{2b}}\,dx\right)^{1/2}\left(\int_{\R^N} \frac{|u|^2}{|x|^{2a}}\,dx\right)^{1/2} - \frac{3b-a-N+3}{2} \int_{\R^N} \frac{|u|^2}{|x|^{a+b+1}}\,dx\\
            \ge{}& C(N,a,b) \inf_{v\in \mathcal M_{2,a,b}} \int_{\R^N} \frac{|u-v|^2}{|x|^{a+b+1}}\,dx,
        \end{aligned}
    \end{equation}
    where its set of minimizers is
    \[ M_{2,a,b} = \left\{ k|x|^{2b+2-N}\exp\left(-\frac{\lambda}{b+1-a}|x|^{b+1-a}\right):k\in \R,\,\lambda>0\right\}.\]
\end{theorem}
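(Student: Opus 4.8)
The plan is to deduce this statement (Theorem~\ref{Do's Thm 2}) from Theorem~\ref{Do's Thm} (the case $p=2$) by a ground-state substitution that absorbs the factor $|x|^{2b+2-N}$ visible in the minimizers $M_{2,a,b}$. Set $\alpha\coloneqq 2b+2-N$ and, for $u\in C_0^\infty(\R^N\backslash\{0\})$, let $v(x)\coloneqq|x|^{-\alpha}u(x)=|x|^{N-2-2b}u(x)$; since $|x|^{-\alpha}$ is smooth and nowhere zero on $\R^N\backslash\{0\}$, the map $u\mapsto v$ is a bijection of $C_0^\infty(\R^N\backslash\{0\})$. Expanding $|x|^{-2b}|\nabla u|^2$ and integrating the cross term $\alpha|x|^{2\alpha-2b-2}\,x\cdot\nabla(v^2)$ by parts (no boundary contribution, $v$ having compact support in $\R^N\backslash\{0\}$), the zeroth-order term in $v$ carries the coefficient $\alpha(2b+2-N-\alpha)$, which vanishes exactly at $\alpha=0$ and at $\alpha=2b+2-N$. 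Together with the trivial rescaling of the other two weighted norms this yields
\begin{gather*}
\int_{\R^N}\frac{|\nabla u|^2}{|x|^{2b}}\,dx=\int_{\R^N}\frac{|\nabla v|^2}{|x|^{2b'}}\,dx,\qquad \int_{\R^N}\frac{|u|^2}{|x|^{2a}}\,dx=\int_{\R^N}\frac{|v|^2}{|x|^{2a'}}\,dx,\\
\int_{\R^N}\frac{|u|^2}{|x|^{a+b+1}}\,dx=\int_{\R^N}\frac{|v|^2}{|x|^{a'+b'+1}}\,dx,
\end{gather*}
with $b'\coloneqq N-2-b$ and $a'\coloneqq a+N-2-2b$; one checks $a'+b'+1=(a+b+1)-2\alpha=2N-3-3b+a$ and that $\frac{3b-a-N+3}{2}=\frac{N-1-a'-b'}{2}$ is precisely the sharp constant of \eqref{CKN} for $(p,a,b)=(2,a',b')$. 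Hence the left side of \eqref{weak stability 2} equals the $(2,a',b')$-deficit of $v$.

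Second, I would check that $(2,a',b')$ is admissible in Theorem~\ref{Do's Thm}. Since $b'=N-2-b$, the hypothesis $\tfrac{N-2}{2}<b\le N-2$ is exactly $0\le b'<\tfrac{N-2}{2}$; the critical identity $a'+b'+1=\tfrac{2Nb'}{N-2}$ rearranges to $N(b-a+3)=2(3b-a+3)$; and substituting the latter, equivalently $a=3+\tfrac{N-6}{N-2}b$, into the remaining requirement $a'<\tfrac{Nb'}{N-2}$ collapses it to the single inequality $b>\tfrac{N-2}{2}$ --- again our hypothesis. Therefore Theorem~\ref{Do's Thm} (with $p=2$) applies to $v$: there is a universal constant $C=C(N,a,b)>0$ with $\text{deficit}(v)\ge C\inf_{w\in\mathcal M_{2,a',b'}}\int_{\R^N}\frac{|v-w|^2}{|x|^{a'+b'+1}}\,dx$.

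Finally I would transport the estimate back. Because $b'+1-a'=b+1-a$ and $|x|^{\alpha}k\exp\!\big(-\tfrac{\lambda}{b+1-a}|x|^{b+1-a}\big)=k|x|^{2b+2-N}\exp\!\big(-\tfrac{\lambda}{b+1-a}|x|^{b+1-a}\big)$, multiplication by $|x|^{\alpha}$ maps $\mathcal M_{2,a',b'}$ bijectively onto $M_{2,a,b}$; and writing $w=|x|^{-\alpha}(u-u^\ast)$ with $u^\ast=|x|^{\alpha}w\in M_{2,a,b}$, the same weight identity gives $\int_{\R^N}\frac{|v-w|^2}{|x|^{a'+b'+1}}\,dx=\int_{\R^N}\frac{|u-u^\ast|^2}{|x|^{a+b+1}}\,dx$, so the two infima coincide. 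Combining the three steps proves \eqref{weak stability 2} with the same $C$. The only non-routine part is the parameter bookkeeping of the second step --- this is exactly where the equation $N(b-a+3)=2(3b-a+3)$ is forced --- whereas the $p=2$ stability input is classical (McCurdy--Venkatraman \cite{McCurdy2021}, Cazacu--Flynn--Lam--Lu \cite{Cazacu2024}) and is already packaged in Theorem~\ref{Do's Thm}. If one wanted a self-contained argument, the real work would be to reprove that $p=2$ case directly via the integration-by-parts identity of \cite{Cazacu2024} together with a spectral-gap/weighted-Hardy estimate for the linearization around the profile, which is where the genuine difficulty of this circle of results lies.
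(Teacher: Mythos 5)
Your reduction is correct, but note the context: the present paper does not prove this statement at all --- it is quoted verbatim from Do--Flynn--Lam--Lu \cite{Do2023} and used later as a black box (e.g.\ in Proposition \ref{min} and in the proof of Theorem \ref{plus version}), so there is no in-paper proof to compare against; in the source the two regimes of Theorems \ref{Do's Thm} and \ref{Do's Thm 2} are handled by the identity-plus-weighted-Poincar\'e machinery (the same machinery this paper deploys for the second-order case in Theorem \ref{main I - 2nd order}). What you do instead is derive the regime $\frac{N-2}{2}<b\le N-2$ from the regime of Theorem \ref{Do's Thm} by the ground-state substitution $u=|x|^{\alpha}v$, $\alpha=2b+2-N$, and your bookkeeping checks out: the cross term contributes $\alpha\,(2b+2-N-\alpha)\int |x|^{2\alpha-2b-2}v^2=0$, so $\|u\|_{H^2_b}=\|v\|_{H^2_{b'}}$ with $b'=N-2-b$, $a'=a+N-2-2b$; one has $N-1-a'-b'=3b-a-N+3$, $b'+1-a'=b+1-a>0$, the hypotheses $\frac{N-2}{2}<b\le N-2$ and $N(b-a+3)=2(3b-a+3)$ translate exactly into $0\le b'<\frac{N-2}{2}$, $a'+b'+1=\frac{2Nb'}{N-2}$ and $a'<\frac{Nb'}{N-2}$, and multiplication by $|x|^{\alpha}$ carries $\mathcal M_{2,a',b'}$ bijectively onto $M_{2,a,b}$ while preserving the weighted $L^2_c$ distance. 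This is a legitimate and arguably more economical route (it also explains structurally why the minimizers in case (2) carry the factor $|x|^{2b+2-N}$, and it is in the same spirit as the Horiuchi-type change of variables the paper uses in Section 3.2); its only cost is that it is conditional on Theorem \ref{Do's Thm}, i.e.\ it does not remove the genuinely hard stability input, which both you and the paper take as given.
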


While it is natural to anticipate that the deficit $\delta(u)$, as defined above, can control one of the two norms on the left side of \eqref{CKN}, unfortunately, it was derived in \cite{McCurdy2021} that, for any nonnegative constants $(M_1,M_2)\neq (0,0)$, there exists $u\in W^{1,2}(\R^N)$ with $|x|u\in L^2(\R^N)$ and $u^*\in E$ such that
\[ \|\nabla u\|_{L^2(\R^N)}^2\||x|u\|_{L^2(\R^N)}^2 - \frac{N^2}{4}\|u\|_{L^2(\R^N)}^4 \le M_1 \|\nabla(u-u^*)\|_{L^2(\R^N)}^2 + M_2 \||x|(u-u^*)\|_{L^2(\R^N)}^2. \]

For the $L^p$-CKN inequality, we will prove that this property holds as well (see Proposition \ref{prop1} below), hence, it is impossible to establish stability results for these two norms separately.

Recently, Chen and Tang \cite{chen2024} considered the optimal constants and minimizers of the following second-order CKN inequalities:
\begin{theorem}[{\cite[Theorem 2.5]{chen2024}}]
    Let $N\ge 1$ and $p>1$. Assume $(a,b)$ satisfy one of the following conditions:
    \begin{enumerate}
        \item [(1)] $b+(p-1)a+1>0$, $b-a+1>0$ and $pb+(p-1)N<0$;
        \item [(2)] $b+(p-1)a+1<0$, $b-a+1<0$ and $pb+(p-1)N>0$;
        \item [(3)] $b+(p-1)a+1=0$,
    \end{enumerate}
    then there holds
    \begin{equation}
        \left(\int_{\R^N} \frac{|\Delta u|^p}{|x|^{pb}}\,dx\right)^{1/p}\left(\int_{\R^N} \frac{|\nabla u|^p}{|x|^{pa}}\,dx\right)^{\frac{p-1}{p}} \ge \frac{|b+(p-1)a+(p-1)(N-1)|}{p}\int_{\R^N} \frac{|\nabla u|^p}{|x|^{b+(p-1)a+1}}],dx.
    \end{equation}
    Moreover, the equality holds if and only if
    \begin{align*}
        & u(x) = \Lambda \int_{|x|}^\infty r^{1-N}\exp\left(-\frac{\lambda r^{b-a+1}}{b-a+1}\right)\,dr,\quad \text{for}\ a,b \ \text{satisfy (1)}; \\
        & u(x) = \Lambda \int_{|x|}^\infty r^{1-N}\exp\left(\frac{\lambda r^{b-a+1}}{b-a+1}\right)\,dr,\quad \text{for} \ a,b \ \text{satisfy (2)}.
    \end{align*}
\end{theorem}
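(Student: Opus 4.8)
The plan is to deduce this second-order inequality from the first-order $L^p$-CKN inequality \eqref{CKN} by a change of unknown that exploits the factorization of the Laplacian on radial functions. For radial $u=u(r)$ with $r=|x|$ one has $|\nabla u|=|u'(r)|$ and
\[ \Delta u = u''(r)+\tfrac{N-1}{r}u'(r) = r^{1-N}\big(r^{N-1}u'(r)\big)'. \]
Introduce $w(r):=r^{N-1}u'(r)$, so that $u'(r)=r^{1-N}w(r)$ and $\Delta u = r^{1-N}w'(r)$. The decisive point is that $\Delta u$ and $\nabla u$ now carry the \emph{same} weight $r^{1-N}$, with $w'$ in the former and $w$ in the latter --- exactly the ``gradient versus function'' pairing governing \eqref{CKN}. (One cannot simply insert $\nabla u$ into \eqref{CKN}: that would produce $|u''|$, not $|\Delta u|$; the substitution is precisely what absorbs the $\tfrac{N-1}{r}u'$ term into a clean derivative.)

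Passing to polar coordinates --- the factor $|\mathbb{S}^{N-1}|$ enters the two sides with matching total power and cancels --- the three integrals become $\int_0^\infty |w'|^p r^{N-1-pB}\,dr$, $\int_0^\infty |w|^p r^{N-1-pA}\,dr$ and $\int_0^\infty |w|^p r^{N-1-\Gamma}\,dr$, where $A:=a+N-1$, $B:=b+N-1$, and $\Gamma:=(p-1)A+B+1$. A short computation gives $\Gamma = \gamma + p(N-1)$ with $\gamma := b+(p-1)a+1$ the right-hand weight exponent, and, crucially, $N-1-(p-1)A-B = -\big(b+(p-1)a+(p-1)(N-1)\big)$. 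Hence the asserted inequality for radial $u$ is exactly the radial form of \eqref{CKN} with the shifted parameters $(a,b)\mapsto(A,B)$, and the stated constant $\tfrac1p\,|b+(p-1)a+(p-1)(N-1)|$ is just $\tfrac1p\,|N-1-(p-1)A-B|$. The equality analysis transfers along the substitution: equality in \eqref{CKN} for $(A,B)$ forces $w(r)=k\exp\!\big(-\tfrac{\lambda}{B+1-A}r^{B+1-A}\big)=k\exp\!\big(-\tfrac{\lambda}{b+1-a}r^{b-a+1}\big)$, so $u'(r)=r^{1-N}w(r)$, and integrating from $r$ to $+\infty$ (using decay at infinity) yields $u(x)=\Lambda\int_{|x|}^{\infty}s^{1-N}\exp\!\big(-\tfrac{\lambda}{b-a+1}s^{b-a+1}\big)\,ds$, as claimed; the $\exp(+\cdots)$ profile in case (2) appears because there the sign of the relevant exponent in the Hölder equality condition flips.

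\textbf{Hypotheses, the three cases, and the main obstacle.} The engine behind \eqref{CKN} is the identity $|x|^{-\Gamma}=(N-\Gamma)^{-1}\Div\!\big(x|x|^{-\Gamma}\big)$, one integration by parts, and Hölder's inequality; this in fact yields the bound with constant $\tfrac1p|N-\Gamma|$ for any admissible weights with $N\ne\Gamma$, the role of the extra sign conditions being to determine which exponent makes the Hölder equality case compatible with decay. Under condition (1) one checks $B+1-A=b+1-a>0$ and $B\le\frac{N-p}{p}$ (equivalently $pb+(p-1)N\le0$), and these two constraints already force $b+(p-1)a+(p-1)(N-1)<0$, i.e. $N-\Gamma<0$, consistent with the absolute value; condition (2) is the mirror regime $N-\Gamma>0$; condition (3), $b+(p-1)a+1=0$, trivializes the right-hand weight and reduces the statement to a weighted Hardy-type inequality, covered by the same computation provided $N\ne\Gamma$. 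I expect the routine-but-necessary point to be the justification of the integration by parts in this weighted setting (finiteness of all integrals and vanishing of the boundary terms at $0$ and $\infty$), which is exactly what the strict inequalities in (1)--(3) secure. Finally, for a \emph{general} (non-radial) $u$ one must first reduce to the radial case; since the left-hand side involves $\Delta u$ rather than merely $\nabla u$, this needs a Talenti-type comparison principle for the weighted problem, and I expect \emph{this} reduction --- not the one-dimensional computation --- to be the genuine difficulty.
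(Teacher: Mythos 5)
Your radial reduction is sound: setting $w(r)=r^{N-1}u'(r)$ so that $\Delta u=r^{1-N}w'(r)$, the exponent bookkeeping $(a,b)\mapsto(A,B)=(a+N-1,b+N-1)$, $\Gamma=\gamma+p(N-1)$ and $N-1-(p-1)A-B=-(b+(p-1)a+(p-1)(N-1))$ is correct, and under condition (1) the shifted parameters do satisfy the hypotheses of \eqref{CKN} (for (2) you are outside the stated range $b+1-a>0$ and would have to rerun the divergence-plus-H\"older argument in the mirror regime, as you note). But this only proves the theorem for \emph{radial} $u$, while the statement is for all $u\in C_c^\infty(\R^N\setminus\{0\})$, and the step you defer --- ``a Talenti-type comparison principle for the weighted problem'' --- is precisely the missing content, not a routine verification. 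Symmetrization is ill-suited here: the left-hand side is a product of two weighted norms, one of which involves $|\Delta u|$ (Talenti-type comparison controls rearrangements of solutions of $-\Delta u=f$, not $\|\Delta u^\ast\|$ for a rearranged $u$), the weights $|x|^{-pb},|x|^{-pa},|x|^{-\gamma}$ under (1)--(3) have no monotonicity structure that a rearrangement argument could exploit, and there is no known rearrangement inequality that simultaneously decreases $\|\Delta u\|_{L^p_b}\|\nabla u\|_{L^p_a}^{p-1}$ and does not decrease the right-hand side, which itself involves $|\nabla u|$ rather than $|u|$. Moreover the ``if and only if'' part requires showing that equality \emph{forces} radial symmetry, which a comparison-principle reduction would not deliver even if it existed.

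The proof the paper relies on (the identity \eqref{identity-I of 2nd order} from \cite{chen2024}) handles general, non-radial $u$ directly and is where the real work lies: the deficit is written as a nonnegative remainder $\frac1p\int |x|^{-pb}\mathcal{R}_p(\cdot,\cdot)$ plus the term $\epsilon[b+(p-1)a+1]\int |\nabla u|^{p-2}\big[(x\cdot\nabla u)^2-|x|^2|\nabla u|^2\big]|x|^{-b-(p-1)a-3}\,dx$, and under each of (1)--(3) the prefactor's sign combines with the Cauchy--Schwarz bound $(x\cdot\nabla u)^2\le |x|^2|\nabla u|^2$ to make this term nonnegative (it vanishes in case (3)). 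Equality then forces $\nabla u\parallel x$, i.e.\ radiality, together with the vanishing of $\mathcal{R}_p$, which yields the first-order ODE and hence the stated minimizers. Your one-dimensional computation essentially reproduces the radial content of that identity, but without the identity (or some substitute that controls the angular part of $\nabla u$) the general case and the rigidity statement remain unproven, so as it stands the proposal has a genuine gap.
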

Their proof relied on the following identity:
\begin{theorem}[{\cite[Theorem 2.3 and 2.4]{chen2024}}]
    Let $N\ge 1$, $p>1$, $b-a+1>0$ and set $\epsilon = \sgn(pb+(p-1)N)$. For $u\in C_c^\infty(\R^N\backslash\{0\})$, there holds
    \begin{equation}\label{identity-I of 2nd order}
        \begin{aligned}
            &  \left(\int_{\R^N} \frac{|\Delta u|^p}{|x|^{pb}}\,dx\right)^{1/p}\left(\int_{\R^N} \frac{|\nabla u|^p}{|x|^{pa}}\,dx\right)^{\frac{p-1}{p}} - \frac{|b+(p-1)a+(p-1)(N-1)|}{p}\int_{\R^N} \frac{|\nabla u|^p}{|x|^{b+(p-1)a+1}}],dx\\
             & = \frac 1p \int_{\R^N} \frac{1}{|x|^{pb}}\mathcal{R}_p\left(-\lambda^{\frac 1p}|x|^{b-a-1}x\cdot \nabla u, \lambda^{-\frac{p-1}{p}}\Delta u \right)\,dx \\
            & \quad +\epsilon [b+(p-1)a+1]\int_{\R^N} \frac{|\nabla u|^{p-2}[(x\cdot \nabla u)^2-|x|^2|\nabla u|^2]}{|x|^{b+(p-1)a+3}}\,dx,
        \end{aligned}
    \end{equation}
    where
    \[ \mathcal{R}_p(s,t) \coloneqq |t|^p + (p-1)|s|^p - p|s|^{p-2}st,\quad s,t \in \R,\]
    and $\lambda = \frac{\left(\int_{\R^N} |x|^{-pb}|\Delta u|^p\right)^{1/p}}{\left(\int_{\R^N}|x|^{-pa}|\nabla u|^p\right)^{1/p}}$.
\end{theorem}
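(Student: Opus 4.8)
The plan is to establish \eqref{identity-I of 2nd order} by the ``nonlinear integration by parts'' scheme used for \eqref{CKN} itself (cf. \cite{Do2023,Cazacu2024}): realize the deficit as an integral of the manifestly nonnegative density $\mathcal R_p$ plus explicit lower-order remainders, with the normalizing factor $\lambda$ chosen so that the product $\|\Delta u/|x|^b\|_{L^p}\|\nabla u/|x|^a\|_{L^p}^{p-1}$ is produced exactly. First I would record the two elementary facts used throughout: $\mathcal R_p(s,t)\ge 0$ for all $s,t\in\R$, with equality if and only if $s=t$ (a consequence of Young's inequality $p|s|^{p-1}|t|\le (p-1)|s|^p+|t|^p$), and the $p$-homogeneity $\mathcal R_p(cs,ct)=c^p\mathcal R_p(s,t)$, equivalently $\mathcal R_p(-\lambda^{1/p}\alpha,\lambda^{-(p-1)/p}\beta)=\lambda^{-(p-1)}\mathcal R_p(-\lambda\alpha,\beta)$.

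The next step is to expand the first integral on the right of \eqref{identity-I of 2nd order}. Writing $w:=x\cdot\nabla u$ and using the definition of $\lambda$, the $|t|^p$-part contributes exactly $\tfrac1p\|\Delta u/|x|^b\|_{L^p}\|\nabla u/|x|^a\|_{L^p}^{p-1}$; the $(p-1)|s|^p$-part contributes $\tfrac{p-1}{p}\lambda\int |x|^{-pa-p}|w|^{p}\,dx$; and in the cross term $-p|s|^{p-2}st$ the powers of $\lambda$ cancel, leaving $\int |x|^{-(b+(p-1)a+p-1)}|w|^{p-2}w\,\Delta u\,dx$. Thus the entire content of the theorem is reduced to evaluating this cross term and matching it against the Hardy-type term and the angular remainder.

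The core computation is the integration by parts in $\int |x|^{-(b+(p-1)a+p-1)}|w|^{p-2}w\,\Delta u\,dx$, moving $\Delta u=\Div(\nabla u)$ onto the rest; expanding $\nabla\!\bigl(|w|^{p-2}w\,|x|^{-\gamma}\bigr)$ and using $\nabla w=\nabla u+(D^2u)x$ splits the result into a ``radial'' part built from $|\nabla u|^2|w|^{p-2}$ and powers of $|x|$, and ``angular'' parts carrying the Hessian contraction. It is cleanest to pass to polar coordinates $x=r\omega$, so that $w=r u_r$, $(x\cdot\nabla u)^2-|x|^2|\nabla u|^2=-|\nabla_\omega u|^2$ with $\nabla_\omega$ the spherical gradient, and to integrate by parts once more in $r$; the exponent relations (in particular $b-a+1>0$, which also governs integrability at $r=0$ and $r=\infty$, together with $u\in C_c^\infty(\R^N\setminus\{0\})$ to discard boundary terms) then reorganize the radial part into $\pm\tfrac{b+(p-1)a+(p-1)(N-1)}{p}\int|x|^{-(b+(p-1)a+1)}|\nabla u|^p\,dx$, the sign being pinned down by $\epsilon=\sgn(pb+(p-1)N)$, which likewise fixes the sign of $b+(p-1)a+(p-1)(N-1)$ so that the absolute value in the statement is the correct normalization. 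Finally one checks that the leftover tangential-derivative terms, together with the discrepancy between $\int|x|^{-pa-p}|w|^{p}\,dx$ and $\int|x|^{-pa}|\nabla u|^p\,dx$, recombine into $\epsilon[b+(p-1)a+1]\int|x|^{-(b+(p-1)a+3)}|\nabla u|^{p-2}\bigl[(x\cdot\nabla u)^2-|x|^2|\nabla u|^2\bigr]\,dx$.

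The main obstacle is exactly this last step: the term $x^{T}(D^2u)x$ produced by the first integration by parts is not of the target form, and one must integrate by parts a further time and exploit the linear relations among $b$, $(p-1)a$, $N$ and $p$ to obtain the exact cancellations; getting the constant $\epsilon[b+(p-1)a+1]$ — and its sign — to come out correctly is the delicate point. When $p=2$ the whole argument collapses to an elementary double integration by parts, and for radial $u$ the angular remainder vanishes identically; combined with $\mathcal R_p\ge 0$ this recovers \eqref{CKN}, with equality forcing $\Delta u=-\lambda|x|^{b-a-1}(x\cdot\nabla u)$, whose radial solutions are precisely the stated extremals.
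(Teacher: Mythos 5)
First, a point of reference: the paper you are working from does not prove this statement at all --- it is quoted as a black box from Chen--Tang \cite{chen2024} (their Theorems 2.3 and 2.4) --- so your attempt has to stand on its own as a reconstruction of that computation. Your skeleton is the right one, and it is indeed how such identities are produced in \cite{Cazacu2024,Do2023}: expand $\mathcal{R}_p$, observe that the choice of $\lambda$ turns the $|t|^p$-part into $\tfrac1p\|\Delta u\|_{L^p_b}\|u\|_{H^p_a}^{p-1}$, note that the powers of $\lambda$ cancel in the cross term, and then integrate the cross term $\int |x|^{-(b+(p-1)a+p-1)}|w|^{p-2}w\,\Delta u\,dx$, $w=x\cdot\nabla u$, by parts (twice) in polar coordinates. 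But the proposal stops exactly where the theorem lives: the second integration by parts, the treatment of the Hessian contraction $x^{T}(D^2u)x$, the boundary terms, and the matching of every constant are only announced (``one checks that\dots''), and this bookkeeping \emph{is} the identity; nothing is actually verified.

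Beyond being incomplete, two of the assertions used to wave the matching through are problematic. (i) The $(p-1)|s|^p$-part equals $\tfrac{p-1}{p}\lambda\int|x|^{-pa-p}|x\cdot\nabla u|^p\,dx$, which differs from the piece $\tfrac{p-1}{p}\|\Delta u\|_{L^p_b}\|u\|_{H^p_a}^{p-1}=\tfrac{p-1}{p}\lambda\int|x|^{-pa}|\nabla u|^p\,dx$ needed to reconstruct the full product of norms by the $\lambda$-weighted defect $\tfrac{p-1}{p}\lambda\int|x|^{-pa-p}\bigl(|x|^p|\nabla u|^p-|x\cdot\nabla u|^p\bigr)\,dx$. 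This defect carries a factor of $\lambda$ and is a $p$-th power angular deficit, whereas the remainder in the statement is $\lambda$-free and quadratic in the angular gradient, and the cross term you integrate by parts contains no $\lambda$ at all; so the claimed ``recombination'' of this discrepancy with the tangential terms cannot come out of that computation, and for non-radial $u$ this is precisely the delicate point (for radial $u$ --- the only case the present paper uses --- both angular defects vanish and the identity reduces to an explicit one-dimensional computation that you should simply carry out). (ii) The sign bookkeeping is wrong as stated: under the hypotheses $N\ge1$, $p>1$, $b-a+1>0$ alone, $\epsilon=\sgn(pb+(p-1)N)$ does not determine the sign of $b+(p-1)a+(p-1)(N-1)$ (take $N=3$, $p=2$, $b=0$ and compare $a=0$ with $a=-10$), and there are no ``linear relations among $b$, $(p-1)a$, $N$ and $p$'' available here --- such constraints belong to the later stability theorems, while this identity must hold for all admissible parameters. (A minor slip: the identity yields the second-order CKN inequality, not \eqref{CKN}.) To turn the sketch into a proof you must perform the integrations by parts in full and exhibit the exact cancellation producing the stated $\lambda$-free remainder, or else determine that the $\lambda$-dependent defect survives for non-radial $u$, in which case the identity as transcribed should be checked against the original statement in \cite{chen2024}.
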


\vskip0.2in

\subsection{Main results of the current paper}
We begin by some notations. Let $N\ge 1$, $p>1$. The space $H^p_b(\R^N)$ is the closure of $C_c^\infty(\R^N\backslash\{0\})$ under the norm
$$ \|u\|_{H^p_b(\R^N)} \coloneqq \left(\int_{\R^N} |x|^{-pb}|\nabla u|^p\,dx\right)^{1/p}, $$
and $L^p_a(\R^N)$ is the closure of $C_c^\infty(\R^N\backslash\{0\})$ under the norm
\[ \|u\|_{L^p_a(\R^N)}\coloneqq\left(\int_{\R^N} |x|^{-pa}|u|^p\,dx\right)^{1/p}.\]
The space $\mathcal{H}_{b,a}^{p,q}$ is the closure of $C_c^\infty(\R^N\backslash\{0\})$ under the norm
\[ \|u\|_{\mathcal{H}_{b,a}^{p,q}(\R^N)} \coloneqq \|\Delta u\|_{L_b^p(\R^N)} + \|u\|_{H_a^q(\R^N)}.\]
In the following sections, for the sake of simplifying notations, we will omit $\R^N$ if there are no confusions.

We define the deficit of a function $u\in H^p_b\cap L^p_a$ to be
  \begin{equation}\label{ckn001} \delta_{p,a,b}(u)=\frac{\|u\|_{H_b^p}\|u\|_{L_a^p}^{p-1}}{\|u\|_{L_c^p}^p}-S_{p,a,b},   \end{equation}
where $c=\frac{(p-1)a+b+1}{p}$ and $S_{p,a,b}$ is the best constant of the CKN inequality. Similarly, we define the deficit of a function $u\in \mathcal{H}_{b,a}^{p,p}$ to be
  \begin{equation}\label{ckn002}\sigma_{p,a,b}(u) \coloneqq \frac{\|\Delta u\|_{L_b^p}\|u\|_{H_a^p}^{p-1}}{\|u\|_{H_c^p}^p} - K_{p,a,b}, \end{equation}
where $K_{p,a,b}$ is the best constant of the second-order CKN inequality.

\vskip0.12in

The first part of our paper focus on the CKN inequalities.  Firstly, we will prove the following proposition, which means it is impossible to establish stability results for $\|\cdot\|_{H_b^p}$ or $\|\cdot\|_{L_a^p}$ separately.
\begin{proposition}\label{prop1}
    Assume $1<p<N$ and $b+1\neq a$, then there do not exist universal constant $C>0$ and $\alpha > 0$ such that
    \begin{align*}
        \left( \frac{\inf_{v\in \mathcal{M}_{p,a,b}}\|u-v\|_{H^p_b} }{\|u\|_{L^p_c}} \right)^\alpha \le C \delta_{p,a,b}(u)
    \end{align*}
    or
    \begin{align*}
        \left( \frac{\inf_{v\in \mathcal{M}_{p,a,b}}\|u-v\|_{L^p_a} }{\|u\|_{L^p_c}} \right)^\alpha \le C \delta_{p,a,b}(u)
    \end{align*}
    holds for all $u \in H^p_b \cap L^p_a$.
\end{proposition}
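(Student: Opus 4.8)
The plan is to exploit the scaling and dilation invariances that are built into the CKN functional. The deficit $\delta_{p,a,b}(u)$ is invariant under $u \mapsto k u$ (by homogeneity of all three norms) and, crucially, under the dilation $u(x) \mapsto u_\mu(x) := u(\mu x)$ for $\mu > 0$, because the ratio $\|u\|_{H_b^p}\|u\|_{L_a^p}^{p-1}/\|u\|_{L_c^p}^p$ is scale-invariant precisely when $c = \frac{(p-1)a+b+1}{p}$. The set of minimizers $\mathcal{M}_{p,a,b}$ is itself a two-parameter family generated from a single profile by these two invariances. So the idea is: start from any fixed $u_0$ that is \emph{not} a minimizer (so $\delta_{p,a,b}(u_0) =: \delta_0 > 0$), and look at the dilated family $u_\mu$. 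Along this family $\delta_{p,a,b}(u_\mu) \equiv \delta_0$ stays bounded, while I will show the \emph{relative distance} $\inf_{v}\|u_\mu - v\|_{H_b^p}/\|u_\mu\|_{L_c^p}$ (resp. with $L_a^p$) blows up to $+\infty$ as $\mu \to 0^+$ or $\mu \to \infty$. If that relative distance is unbounded while the deficit is constant, then no inequality of the claimed form $(\text{dist}/\|u\|_{L_c^p})^\alpha \le C\,\delta$ can hold.

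\textbf{Key steps.} First I would record the exact scaling weights: under $u_\mu(x) = u(\mu x)$ one has $\|u_\mu\|_{L_a^p} = \mu^{a - N/p}\|u\|_{L_a^p}$, $\|u_\mu\|_{H_b^p} = \mu^{b+1 - N/p}\|u\|_{H_b^p}$, and $\|u_\mu\|_{L_c^p} = \mu^{c - N/p}\|u\|_{L_c^p}$, and check that $c - N/p = \frac1p(b+1-N/p) \cdot (\text{weight sum})$ makes $\delta$ invariant; I'd also note that $\mathcal{M}_{p,a,b}$ is exactly $\{k\,w_\mu : k\in\R,\ \mu>0\}$ for the standard profile $w(x) = \exp(-\frac{1}{b+1-a}|x|^{b+1-a})$, so applying a dilation permutes $\mathcal{M}_{p,a,b}$ within itself. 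Second, since dilation preserves $\mathcal{M}_{p,a,b}$, for the distance I can undo the dilation: $\inf_{v\in\mathcal{M}}\|u_\mu - v\|_{H_b^p} = \inf_{v\in\mathcal{M}}\|u_\mu - v_\mu\|_{H_b^p} = \mu^{b+1-N/p}\inf_{v\in\mathcal{M}}\|u - v\|_{H_b^p}$. Hence
\begin{align*}
\frac{\inf_{v\in\mathcal{M}_{p,a,b}}\|u_\mu - v\|_{H_b^p}}{\|u_\mu\|_{L_c^p}} = \mu^{(b+1-N/p) - (c - N/p)}\,\frac{\inf_{v\in\mathcal{M}_{p,a,b}}\|u - v\|_{H_b^p}}{\|u\|_{L_c^p}} = \mu^{\beta}\,\frac{\inf_{v\in\mathcal{M}_{p,a,b}}\|u - v\|_{H_b^p}}{\|u\|_{L_c^p}},
\end{align*}
where $\beta = (b+1) - c = \frac{(p-1)(b+1-a)}{p} \neq 0$ precisely because $b+1 \neq a$. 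Third, since $u_0$ is not a minimizer, its numerator $\inf_{v}\|u_0 - v\|_{H_b^p}$ is strictly positive (the infimum over the closed family is attained or at least bounded away from $0$ — one should check the set $\mathcal{M}_{p,a,b}$ is closed in $H_b^p$, which follows from the explicit form and a short compactness argument on the parameters $(k,\mu)$). Therefore letting $\mu \to 0$ or $\mu \to \infty$ (whichever makes $\mu^\beta \to \infty$) drives the left-hand side of the putative inequality to $\infty$ while $\delta_{p,a,b}(u_\mu) = \delta_0$ is fixed, a contradiction with any fixed $C, \alpha$. The $L_a^p$ case is identical with the exponent $a$ in place of $b+1$, giving $\beta' = a - c = \frac{b+1-a}{p}\neq 0$.

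\textbf{Main obstacle.} The only genuinely non-trivial point is producing a concrete admissible $u_0$ that is \emph{not} a minimizer and for which all the quantities are finite and the numerator is strictly positive — i.e.\ making sure $\delta_{p,a,b}(u_0) > 0$ and $\inf_{v\in\mathcal{M}}\|u_0 - v\|_{H_b^p} > 0$ simultaneously, with $u_0 \in H_b^p \cap L_a^p$. This is easy in principle (take a smooth compactly supported bump away from the origin, or a small perturbation of the profile $w$), but one must verify it does not accidentally lie in $\mathcal{M}_{p,a,b}$ and that the minimizing set is closed so the distance is positive; alternatively one can bypass closedness by choosing $u_0$ with a geometric feature (e.g.\ compact support, hence $\not\to$ any $w_\mu$ which is everywhere positive) that forces a positive gap directly. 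A secondary bookkeeping point is confirming the sign and non-vanishing of the scaling exponent $\beta$, which is where the hypothesis $b+1\neq a$ enters; I'd present that computation explicitly since it is the crux of why the statement is sharp.
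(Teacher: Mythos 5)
Your proposal is correct and takes essentially the same route as the paper: the paper also exploits the dilation $\Phi_\lambda u(x)=\lambda^{N/p-c}u(\lambda x)$ (the normalizing prefactor is immaterial since deficit and relative distance are ratios), uses $\Phi_\lambda\mathcal{M}_{p,a,b}=\mathcal{M}_{p,a,b}$ to pull the scaling factor out of the infimum, and sends $\lambda\to 0$ or $\infty$ relying on the nonvanishing exponents $b+1-c$ and $a-c$, exactly your $\beta$ and $\beta'$. The only nit is a harmless sign slip, $a-c=-\frac{b+1-a}{p}$, which does not affect the argument since only $\beta'\neq 0$ is used.
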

In light of this, our focus in this paper shifts towards providing a strong-form stability with respect to the interpolation terms. Specifically, we observe  very different
behaviors for the cases of  $p=2$ and $p>2.$
\begin{theorem}\label{main p=2}
    Let  $p=2$ and  $(a,b)$ satisfy one of the following conditions:
    \begin{enumerate}
        \item [(1)] $0\le b < \frac{N-2}{2}$, $a<\frac{Nb}{N-2}$ and $a+b+1=\frac{2bN}{N-2}$;
        \item [(2)] $\frac{N-2}{2}<b\le N-2$ and $N(b-a+3) = 2(3b-a+3)$.
    \end{enumerate}
    Then there exists a constant $C$ depending only on $N,a,b$, such that for all $u\in H_b^2\cap L^2_a$, it holds
    \begin{equation}\label{result}
        \inf_{v\in\mathcal{M}_{2,a,b}}\frac{ \|u-v\|_{H_b^2} \|u-v\|_{L^2_a} }{\|u\|_{H_b^2}\|u\|_{L^2_a}} \le C\delta_{2,a,b}(u).
    \end{equation}
    We additionally mention that it is sharp (See Remark \ref{rmk 3.1}).
\end{theorem}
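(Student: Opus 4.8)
The plan is to reduce the strong-form stability estimate \eqref{result} to the weak-form estimates already available in Theorems \ref{Do's Thm} and \ref{Do's Thm 2}, which control $\inf_{v\in\mathcal{M}_{2,a,b}}\|u-v\|_{L^2_c}^2$ by the deficit $\delta_{2,a,b}(u)$ (after homogenizing; note $\|u\|_{L^2_c}^2\delta_{2,a,b}(u)$ is exactly the left-hand side of \eqref{weak stability} or \eqref{weak stability 2}). So the core of the argument is to pass from control of the single weighted $L^2_c$-distance to simultaneous control of the product $\|u-v\|_{H^2_b}\|u-v\|_{L^2_a}$. First I would fix the near-optimal $v_0\in\mathcal{M}_{2,a,b}$ realizing (or nearly realizing) the infimum in \eqref{weak stability}, write $w=u-v_0$, and observe that by the CKN inequality \eqref{CKN} applied to $w$ (with $p=2$) we get $\|w\|_{L^2_c}^2\le S_{2,a,b}^{-1}\|w\|_{H^2_b}\|w\|_{L^2_a}$. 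This is the wrong direction, so instead the real task is an \emph{upper} bound on $\|w\|_{H^2_b}\|w\|_{L^2_a}$ in terms of $\|w\|_{L^2_c}^2$ together with quantities controlled by the deficit.

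The key step is an expansion around the minimizer. Since $p=2$, everything is quadratic, and one can use the Euler--Lagrange equation satisfied by $v_0\in\mathcal{M}_{2,a,b}$ (it is a critical point of the CKN quotient) to derive an orthogonality-type identity: the cross terms $\langle v_0, w\rangle$ in the various weighted inner products combine, via the Euler--Lagrange relation, into a single scalar that is itself controlled by the deficit. Concretely, I expect an identity of the form
\begin{equation*}
    \|u\|_{H^2_b}\|u\|_{L^2_a} = \|v_0\|_{H^2_b}\|v_0\|_{L^2_a} + (\text{linear terms that vanish by Euler--Lagrange}) + Q(w),
\end{equation*}
where $Q(w)$ is a quadratic form comparable to $\|w\|_{H^2_b}\|w\|_{L^2_a}$, while the homogeneity $S_{2,a,b}=\|v_0\|_{H^2_b}\|v_0\|_{L^2_a}/\|v_0\|_{L^2_c}^2$ lets us rewrite $\delta_{2,a,b}(u)$ in terms of $Q(w)$ and $\|w\|_{L^2_c}^2$. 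Combining with the weak stability $\|w\|_{L^2_c}^2\lesssim \|u\|_{L^2_c}^2\delta_{2,a,b}(u)$ from Theorem \ref{Do's Thm}/\ref{Do's Thm 2}, and using the spectral gap implicit in those theorems (the quadratic form is positive definite transverse to the manifold $\mathcal{M}_{2,a,b}$), one closes the estimate. Normalizing $\|u\|_{H^2_b}\|u\|_{L^2_a}=1$ by scaling invariance of \eqref{result} removes the denominator and finishes the proof.

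The main obstacle I anticipate is handling the dilation and scaling directions of $\mathcal{M}_{2,a,b}$ correctly: the set of minimizers is a two-parameter family ($k\in\R$, $\lambda>0$), and one must ensure the near-minimizer $v_0$ is chosen so that $w=u-v_0$ is genuinely transverse to \emph{both} the amplitude direction and the dilation direction, otherwise the quadratic form $Q(w)$ degenerates and cannot be bounded below by $\|w\|_{H^2_b}\|w\|_{L^2_a}$ up to a constant. This is exactly where the weak stability input is essential — it already encodes that the second variation of the deficit is coercive modulo these zero modes — so the argument is really about transferring that coercivity from the $L^2_c$-norm to the product norm, which should follow from the Euler--Lagrange equation for $v_0$ (an ODE in $r=|x|$ of the form $v_0'' + \frac{N-1-2b}{r}v_0' = \mu |x|^{\cdots} v_0$, or its first-order analogue) together with weighted Hardy-type inequalities relating $\|w\|_{L^2_a}$, $\|w\|_{L^2_c}$ and $\|w\|_{H^2_b}$ on the relevant range of exponents. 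A secondary technical point is the sharpness claim promised in Remark \ref{rmk 3.1}: for that I would exhibit a one-parameter family $u_\varepsilon = v_0 + \varepsilon \phi$ with $\phi$ a fixed transverse direction and check that both sides of \eqref{result} are comparable to $\varepsilon^2$ as $\varepsilon\to 0$, confirming the exponent $t=1$ cannot be improved.
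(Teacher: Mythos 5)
Your overall strategy --- expand around the minimizer $v_0$ of the $L^2_c$-distance, use the Euler--Lagrange equation to handle cross terms, and feed in the weak stability \eqref{weak stability}/\eqref{weak stability 2} --- points in the same direction as the paper, but the two steps that actually make the argument work are missing, and the substitutes you propose would not supply them. First, the cancellation of the linear terms does not come from the Euler--Lagrange equation alone. In the paper, minimality of $v_0=v(k_0,\lambda_0)$ within the two-parameter family gives $\partial_k$- and $\partial_\lambda$-conditions which, because $\partial_\lambda v=\frac{\alpha}{\lambda}v-\frac{|x|^{b+1-a}}{b+1-a}v$ and $2c=a+b+1$, yield orthogonality of $w=u-v_0$ to $v_0$ in \emph{both} the $L^2_c$- and the $L^2_a$-inner products (\eqref{or1}, \eqref{or2}); only then does testing \eqref{eq} with $u$ and using \eqref{norm of v} produce the third orthogonality \eqref{or3} in $H^2_b$. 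With all three, the expansion is exact ($\|u-v_0\|^2=\|u\|^2-\|v_0\|^2$ in each of the three norms), and no ``spectral gap'' or coercivity of a quadratic form comparable to $\|w\|_{H^2_b}\|w\|_{L^2_a}$ is needed or available: Theorems \ref{Do's Thm} and \ref{Do's Thm 2} control only the $L^2_c$-distance, and by Proposition \ref{prop1} the deficit cannot control $\|w\|_{H^2_b}$ or $\|w\|_{L^2_a}$ through any Hardy-type comparison; the bound on $\|w\|_{H^2_b}^2+\|w\|_{L^2_a}^2$ comes purely from the exact identities together with $\tilde{\delta}_{2,a,b}(v_0)\ge 0$ and the weak bound on $\|w\|_{L^2_c}$, as in \eqref{p=2 middle}.

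Second, your normalization $\|u\|_{H^2_b}\|u\|_{L^2_a}=1$ does not bridge the mismatch between the product deficit $\delta_{2,a,b}$ and the sum-type quantities produced by any quadratic expansion. What is needed is the balancing dilation $\tilde u(x)=u(\lambda^{-\frac{1}{b+1-a}}x)$ with $\lambda=\|u\|_{H^2_b}/\|u\|_{L^2_a}$, which forces $\|\tilde u\|_{H^2_b}=\|\tilde u\|_{L^2_a}$ and hence $\tilde{\delta}_{2,a,b}(\tilde u)=2\delta_{2,a,b}(u)$; without it $\tilde{\delta}_{2,a,b}$ can be arbitrarily large while $\delta_{2,a,b}$ stays small, so an estimate in terms of the sum cannot be converted into \eqref{result}. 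This is exactly how the paper deduces Theorem \ref{main p=2} from \eqref{sum 2}. (Alternatively, once the three orthogonality relations are established one can bypass balancing via
$\bigl(\|v_0\|_{H^2_b}^2+\|w\|_{H^2_b}^2\bigr)\bigl(\|v_0\|_{L^2_a}^2+\|w\|_{L^2_a}^2\bigr)\ge\bigl(\|v_0\|_{H^2_b}\|v_0\|_{L^2_a}+\|w\|_{H^2_b}\|w\|_{L^2_a}\bigr)^2$ together with $\|v_0\|_{H^2_b}\|v_0\|_{L^2_a}=S\|v_0\|_{L^2_c}^2$, but some such device must be made explicit.) Finally, your argument presupposes an exact minimizer with first-order conditions: you need the attainability statement for small deficit (Proposition \ref{min}) and the trivial bound in the large-deficit regime to make this legitimate.
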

For the case $p>2$, we get
\begin{theorem}\label{main}
    Let $p>2,\, 0\le b <\frac{N-p}{p},\, a<\frac{Nb}{N-p}$, and $(p-1)a+b+1=\frac{pbN}{N-p}$. Then there exists a constant $C$ depending only on $N,a,b,p$, such that for all $u\in H_b^p\cap L_a^p$, it holds
    \begin{equation}\label{general result}
        \inf_{v\in\mathcal{M}_{2,a,b}}\frac{ \|u-v\|_{H_b^p} \|u-v\|_{L^p_a}^{p-1} }{\|u\|_{H^p_b}\|u\|_{L^p_a}^{p-1}} \le C\delta_{p,a,b}(u)^{1/p}.
    \end{equation}
\end{theorem}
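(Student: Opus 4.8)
The plan is to reduce the CKN inequality \eqref{CKN} to a one-dimensional problem via the logarithmic substitution $r = e^{-t}$, exploit the $\mathcal{R}_p$-identity à la Do–Flynn–Lam–Lu (Theorem 1.5), and then run a spectral-gap argument on the linearized operator. More precisely, for $u \in H^p_b\cap L^p_a$ I would first reduce to radial functions: since the weights are radial and the exponents in \eqref{CKN} are critical, the deficit $\delta_{p,a,b}$ does not increase under a suitable "radialization" (symmetric decreasing rearrangement adapted to the weighted measure, or simply the observation that after the change of variables the problem becomes one-dimensional and the angular part only increases $\|u\|_{H^p_b}$), so it suffices to bound the minimizing distance by $\delta^{1/p}$ for radial $u$. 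For radial $u$ write $u(x)=f(-\ln|x|)=f(t)$; the substitution turns \eqref{CKN} into a one-dimensional inequality of the form $\|f'\|_{L^p(\R)}\,\|f\|_{L^p(\R)}^{p-1}\ge \kappa \|f\|_{L^p(\R)}^p$ for an appropriate $\kappa$ (with an extra harmless weight that disappears precisely under the critical relation $(p-1)a+b+1=\frac{pbN}{N-p}$), whose extremals are the translates/dilates of $e^{-|t|}$-type profiles, i.e. the images of $\mathcal{M}_{p,a,b}$.

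The core estimate I would then establish is: for the one-dimensional deficit $\tilde\delta(f)$, one has $\inf_{g}\|f-g\|^{p}\lesssim \tilde\delta(f)$ where the infimum is over the one-dimensional extremal manifold and the norm is a combined norm matching $\|u-v\|_{H^p_b}\|u-v\|_{L^p_a}^{p-1}$. For this I would use the pointwise inequality $\mathcal{R}_p(s,t)\ge c_p\,|t-s|^p$ (valid for $p\ge 2$; this is the standard uniform convexity estimate for $x\mapsto|x|^p$, e.g. $|t|^p+(p-1)|s|^p-p|s|^{p-2}st\ge c(p)|t-s|^p$), together with an identity expressing $\delta_{p,a,b}(u)$ as an integral of $\mathcal{R}_p\big(-\lambda^{1/p}|x|^{b-a}x\cdot\nabla u/|x|,\ \lambda^{-(p-1)/p}\Delta u\big)$ plus a nonnegative angular remainder — the first-order analogue of \eqref{identity-I of 2nd order}, which for the first-order CKN is exactly the identity underlying Theorem 1.3/1.7 of Do et al. Feeding the convexity bound into that identity gives
\begin{equation*}
\delta_{p,a,b}(u)\ \gtrsim\ \int_{\R^N}\frac{1}{|x|^{pb}}\,\Big|\,\lambda^{-\frac{p-1}{p}}\Delta u+\lambda^{\frac1p}\tfrac{x}{|x|}\cdot\nabla u\,|x|^{b-a}\,\Big|^{p}\,dx\ +\ (\text{angular part}),
\end{equation*}
which says that the "radial error" $w:=u-\Pi u$ (projection onto the appropriately scaled extremal, chosen so that $\lambda^{-(p-1)/p}\Delta(\Pi u)+\lambda^{1/p}|x|^{b-a}\partial_r(\Pi u)=0$) has small weighted $L^p$-norm of a first-order differential expression in it. An $L^p$ Hardy/Poincaré inequality on the half-line in the $t$-variable (with the exponential weight removed by the substitution) then upgrades this to control of $\|w\|_{H^p_b}$ and $\|w\|_{L^p_a}$ separately, each to the power $p$, hence of the product to the power $p$; taking $p$-th roots yields \eqref{general result}.

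The main obstacle — and the reason the exponent degrades from $1$ (as in $p=2$) to $1/p$ — is precisely this last linearization step: for $p>2$ the functional $u\mapsto\|u\|_{H^p_b}\|u\|_{L^p_a}^{p-1}$ is not twice differentiable in a way that gives a genuine quadratic spectral gap, so one cannot close the argument with a Bianchi–Egnell-type second-variation estimate. Instead one only has the convexity inequality $\mathcal{R}_p(s,t)\ge c_p|t-s|^p$, which is "$p$-homogeneous" rather than quadratic; this forces the distance-to-the-power-$p$ (rather than squared distance) to be controlled by a single power of the deficit, giving the $\delta^{1/p}$ rate. A secondary technical point is justifying the reduction to radial functions with no loss in the deficit and handling the normalization/choice of the optimal $\lambda$ and of the projection $\Pi u$ so that the cross term in the expansion of $\|u\|_{H^p_b}\|u\|_{L^p_a}^{p-1}$ vanishes; I would handle this by an implicit-function/compactness argument choosing $(k,\lambda)\in\mathcal{M}_{p,a,b}$ to realize the infimum, exactly as in the proof of Theorem 1.6 in \cite{Do2023}, and then argue that near that minimizer the Euler–Lagrange orthogonality conditions kill the first-order term.
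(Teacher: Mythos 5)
There are two genuine gaps in your plan, and both concern steps the paper deliberately avoids. First, the ``reduction to radial functions'' is not justified and is not a harmless step: even if a radialization decreases the deficit, the distance from the original $u$ to $\mathcal{M}_{p,a,b}$ is not controlled by the distance of the radialized function to $\mathcal{M}_{p,a,b}$, so proving \eqref{general result} for radial $u$ does not prove it for general $u$. The paper never reduces to radial functions; it uses the Horiuchi-type change of variables $u(r\theta)=\tilde u(r^{l}\theta)$, which keeps the full angular dependence (the angular gradient term is simply bounded below by dropping the factor $l^{-2}\ge 1$) and reduces the weights to $(a,b,c)=(-\tfrac{1}{p-1},0,0)$, after which it invokes the weak stability \eqref{weak stability} of Do--Flynn--Lam--Lu, which is valid for all (not just radial) functions. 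Second, your endgame --- upgrading the $\mathcal{R}_p$-identity via a Hardy/Poincar\'e inequality to control $\|u-v\|_{H^p_b}$ and $\|u-v\|_{L^p_a}$ ``separately'' by a power of $\delta_{p,a,b}(u)$ --- cannot work as stated: Proposition \ref{prop1} shows exactly that no power of the deficit controls either norm separately, because the deficit is scale invariant while those norms are not. Even after you fix the normalization of $\lambda$, there is a circularity: the quantity the identity controls is (for radial data) $\nabla(u-v)+\lambda|x|^{b-a}\tfrac{x}{|x|}(u-v)$ in $L^p_b$, so splitting off $\|\nabla(u-v)\|_{L^p_b}$ requires a bound on $\||x|^{b-a}(u-v)\|_{L^p_b}=\|u-v\|_{L^p_a}$, which is one of the two quantities you are trying to prove and is not supplied by the weak stability (that only controls the $L^p_c$-distance).

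For comparison, the paper's actual route for $p>2$ is: (i) introduce the strengthened deficit $\tilde\delta_{p,a,b}$ and prove Theorem \ref{plus version} for the sum $\|u-\bar v\|_{H^p_b}^p+(p-1)\|u-\bar v\|_{L^p_a}^p$; (ii) to do so, instead of the nearest point it picks $v\in P_u$ maximizing $\int|v|^{p-2}uv$ over normalized minimizers and sets $\bar v=\mu v$, which yields the three orthogonality conditions \eqref{orth-111} (the nearest point in $L^p_c$ does \emph{not} obviously satisfy them when $p>2$); (iii) it applies the sharp Figalli--Zhang vector inequalities (Lemma \ref{primary}) rather than an $\mathcal{R}_p$/Poincar\'e argument, and absorbs the quadratic error term $\|\bar v\|_{L^p}^{p-2}\|u-\bar v\|_{L^p}^2$ using the weak stability \eqref{weak stability} together with the estimate $\|u-\mu v\|_{L^p}\lesssim\delta(u)^{1/(2p)}\|u\|_{L^p}$ of Proposition \ref{pro} --- this is where the exponent $1/p$ actually originates, not from a ``$p$-homogeneous spectral gap''; (iv) finally it rescales $u$ so that $\|\tilde u\|_{H^p_b}=\|\tilde u\|_{L^p_a}$, which makes $\tilde\delta_{p,a,b}(\tilde u)=p\,\delta_{p,a,b}(u)$ and converts the sum bound into the scale-invariant product bound \eqref{general result} by AM--GM. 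If you want to salvage your approach, you would need to replace the radial reduction by the change of variables above, build the normalization in from the start, and find a substitute for the orthogonality/projection step that does not presuppose control of $\|u-v\|_{L^p_a}$.
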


\vskip0.3in
To obtain such statements, we introduce a stronger deficit:
\begin{equation}\label{ckn003}\tilde{\delta}_{p,a,b}(u) = \frac{\|u\|_{H^p_b}^p + (p-1) \|u\|_{L^p_a}^p}{\|u\|_{L_c^p}^p} - pS_{p,a,b}.\end{equation}
It is not difficult to see that $\tilde{\delta}_{p,a,b}(u)\ge p\delta_{p,a,b}(u)\ge 0$ (cf. \eqref{ckn001}).
Interestingly,  we  are able to prove that this  stronger deficit $\tilde{\delta}_{p,a,b}(u)$ can control the distance of $u$ from $\mathcal{M}_{p,a,b}$ under both $H^p_b$- and $L^p_a$-norms:
\begin{theorem}\label{plus version}
    Suppose that $p,a,b$ satisfy one of the conditions of Theorem \ref{main p=2} or Theorem \ref{main}, then there exists a constant $C$ depending only on $p,a,b,N$ such that for any $u\in H^p_b \cap L^p_a$,
    \begin{equation}\label{sum 1}
        \inf_{v\in \mathcal{M}_{p,a,b}}\frac{\|u-v\|_{H_b^p}^p + (p-1)\|u-v\|_{L^p_a}^p}{\|u\|_{H^p_b}^p + (p-1)\|u\|_{L^p_a}^p} \le C\tilde{\delta}_{p,a,b}(u)^{1/p}
    \end{equation}
    if $p>2$;
    \begin{equation}\label{sum 2}
        \inf_{v\in \mathcal{M}_{2,a,b}}\frac{\|u-v\|_{H_b^2}^2 + \|u-v\|_{L^2_a}^2}{\|u\|_{H^2_b}^2 + \|u\|_{L^2_a}^2} \le C\tilde{\delta}_{2,a,b}(u)
    \end{equation}
    if $p=2$.
\end{theorem}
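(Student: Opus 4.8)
The plan is to reduce Theorem~\ref{plus version} to the two weak stability estimates \eqref{weak stability} and \eqref{weak stability 2} of Do--Flynn--Lam--Lu, combined with the algebraic identity underlying the $L^p$-CKN inequality (the $\mathcal R_p$-type identity that produces \eqref{CKN}). The starting point is the elementary observation already noted in the excerpt, $\tilde\delta_{p,a,b}(u)\ge p\,\delta_{p,a,b}(u)\ge 0$, together with the fact that $\|u\|_{H_b^p}^p+(p-1)\|u\|_{L_a^p}^p\ge p\|u\|_{H_b^p}\|u\|_{L_a^p}^{p-1}$ by Young's inequality, with equality iff $\|u\|_{H_b^p}=\|u\|_{L_a^p}$. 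So the ``stronger deficit'' $\tilde\delta$ simultaneously encodes (i) the CKN deficit $\delta$ and (ii) the imbalance between the two norms; controlling it should control both the imbalance and the genuine distance to $\mathcal M_{p,a,b}$.

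First I would make the scaling/normalization reduction. Both norms are homogeneous of degree $1$, the inequality is invariant under $u\mapsto ku$, and there is a one-parameter dilation $u(x)\mapsto u(\mu x)$ under which $\|u\|_{L_c^p}$, $\|u\|_{H_b^p}$, $\|u\|_{L_a^p}$ scale by explicit powers of $\mu$; the condition $(p-1)a+b+1=\frac{pbN}{N-p}$ (resp.\ the condition in case (2)) is exactly what makes $\delta_{p,a,b}$ dilation-invariant. I will use the $k$-scaling to set $\|u\|_{L_c^p}=1$ and the $\mu$-scaling to arrange $\|u\|_{H_b^p}=\|u\|_{L_a^p}$; call this common value $A$, so that after normalization $\tilde\delta_{p,a,b}(u)=pA^p-pS_{p,a,b}$ and hence $A^p=S_{p,a,b}+\tilde\delta/p$, i.e.\ $A$ is close to $S_{p,a,b}^{1/p}$ when $\tilde\delta$ is small. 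Under this normalization the two deficits coincide up to constants: $\tilde\delta_{p,a,b}(u)=pA^p-pS_{p,a,b}$ while $\delta_{p,a,b}(u)=A^p-S_{p,a,b}$, so $\tilde\delta=p\,\delta$ \emph{after} the balancing dilation. This is the crucial point — once the norms are balanced, the stronger deficit is no stronger, and Theorem~\ref{plus version} for the balanced function follows from the already-established Theorems \ref{main p=2} and \ref{main} (equivalently, directly from \eqref{weak stability}/\eqref{weak stability 2}) which give $\inf_v \|u-v\|_{L_c^p}^p\lesssim \delta^{\cdot}$, and then one converts the $L_c^p$-distance back to $H_b^p$- and $L_a^p$-distances using the CKN inequality applied to $u-v$ (the numerator $\|u-v\|_{H_b^p}\|u-v\|_{L_a^p}^{p-1}\ge S_{p,a,b}\|u-v\|_{L_c^p}^p$ goes the wrong way, so instead one argues that $v$ can be chosen with controlled norms and uses the identity to bound each norm of $u-v$ by the $L_c^p$-distance plus lower-order deficit terms).

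Second I would handle the undoing of the dilation. Let $u$ be arbitrary (normalized only by $\|u\|_{L_c^p}=1$) and let $\mu_*$ be the balancing dilation; write $u_{\mu_*}$ for the rescaled function, for which the previous paragraph gives $v\in\mathcal M_{p,a,b}$ with $\|u_{\mu_*}-v\|_{H_b^p}^p+(p-1)\|u_{\mu_*}-v\|_{L_a^p}^p\lesssim \tilde\delta(u)^{1/p}\cdot(\text{normalizing factor})$ — here I use that $\tilde\delta$ is dilation-invariant, $\tilde\delta(u_{\mu_*})=\tilde\delta(u)$, and that $\mathcal M_{p,a,b}$ is dilation-invariant as a set (dilation just moves $\lambda$), so pulling $v$ back by $\mu_*^{-1}$ stays in $\mathcal M_{p,a,b}$. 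The only thing to check is that the dilation factors relating $\|u_{\mu_*}-v\|_{H_b^p}$ to $\|u-\mu_*^{-1}\!\cdot v\|_{H_b^p}$ stay bounded above and below; this is where one needs $\mu_*$ itself to be controlled, and $\mu_*$ is determined by the ratio $\|u\|_{H_b^p}/\|u\|_{L_a^p}$ which — and here is the one genuinely delicate estimate — must be shown to be bounded away from $0$ and $\infty$ whenever $\tilde\delta(u)$ is bounded, \emph{and} to converge to $1$ as $\tilde\delta\to 0$. That follows because $\|u\|_{H_b^p}\|u\|_{L_a^p}^{p-1}\le \frac1p(\|u\|_{H_b^p}^p+(p-1)\|u\|_{L_a^p}^p) = S_{p,a,b}+\tilde\delta/p$ while the CKN inequality gives $\|u\|_{H_b^p}\|u\|_{L_a^p}^{p-1}\ge S_{p,a,b}$, so $1\le \frac{\frac1p(r+(p-1)r^{-1})}{1}\cdot$ (for $r=(\|u\|_{H_b^p}/\|u\|_{L_a^p})$ appropriately) forces $r\to 1$; this sandwich is the heart of the argument.

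I expect the main obstacle to be precisely this last sandwich/quantitative step: turning ``$\tilde\delta$ small $\Rightarrow$ norms nearly balanced, with a quantitative rate matching the $\delta^{1/p}$ (resp.\ $\delta$) exponent'' into a clean inequality, and then bookkeeping the dilation factors so the final constant depends only on $N,a,b,p$. Concretely, writing $X=\|u\|_{H_b^p}^p$, $Y=\|u\|_{L_a^p}^p$ (with $\|u\|_{L_c^p}=1$), one has $X^{1/p}Y^{(p-1)/p}\ge S$ and $X+(p-1)Y=pS+\tilde\delta$, and the deviation $|X-Y|$ — which upon optimizing the dilation is the residual contribution of the imbalance to $\tilde\delta$ — must be bounded by $\tilde\delta$ (not its square root), via a second-order Taylor expansion of $t\mapsto \frac1p t + \frac{p-1}{p}t^{1-p}$ at $t=1$. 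Feeding $|X-Y|\lesssim \tilde\delta$ and $\delta = X^{1/p}Y^{(p-1)/p}-S \lesssim \tilde\delta$ into Theorems \ref{main}/\ref{main p=2} and carefully converting the $L^p_c$-distance to the sum of $H^p_b$- and $L^p_a$-distances (using that the optimal $v$ has norms comparable to those of $u$, hence to $S^{1/p}$) yields \eqref{sum 1} and \eqref{sum 2}; the $p=2$ case is cleaner because then $X,Y$ enter quadratically and the Taylor remainder is exact, which is why one gains a full power of the deficit rather than $\tilde\delta^{1/p}$.
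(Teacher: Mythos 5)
Your plan has a genuine gap at its central step. The weak stability estimates \eqref{weak stability}--\eqref{weak stability 2} only control the distance $\|u-v\|_{L^p_c}$, while \eqref{sum 1}--\eqref{sum 2} require controlling the much stronger norms $\|u-v\|_{H^p_b}$ and $\|u-v\|_{L^p_a}$. You acknowledge that the CKN inequality applied to $u-v$ "goes the wrong way'' and then simply assert that one can "bound each norm of $u-v$ by the $L^p_c$-distance plus lower-order deficit terms''; no such bound holds for an arbitrary choice of $v$, and producing one is precisely the entire content of the theorem. The paper's proof does this by choosing $v$ very carefully so that orthogonality conditions hold: for $p=2$ the $L^2_c$-closest minimizer satisfies \eqref{or1}--\eqref{or2} from the first-order conditions, and the Euler--Lagrange equation \eqref{eq} tested against $u$ yields the third orthogonality \eqref{or3}; Pythagoras in all three inner products then converts $\tilde\delta(u)$, $\tilde\delta(v)\ge 0$ and the weak-stability bound on $\|u-v\|_{L^2_c}$ directly into \eqref{sum 2}. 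For $p>2$ there is no Hilbert structure, so the paper instead projects onto $\mathcal M$ in a nonlinear dual sense (the set $P_u$ maximizing $\int |v|^{p-2}uv$), derives the orthogonality relations \eqref{orth-111} from the maximality in $\lambda$ together with \eqref{eq}, proves the quantitative closeness $\|u-\mu_v v\|_{L^p}\lesssim\delta(u)^{1/(2p)}\|u\|_{L^p}$ (Proposition \ref{pro}, which compares $\mu_v v$ with the actual $L^p_c$-minimizer and loses a square root), and then invokes the Figalli--Zhang vector inequalities (Lemma \ref{primary}) to obtain $\|\nabla u\|_{L^p}^p\ge\|\nabla\bar v\|_{L^p}^p+c_0\|\nabla(u-\bar v)\|_{L^p}^p$ and its analogues. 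None of this machinery appears in your proposal, and without it (or a substitute) the step from $L^p_c$-closeness to $H^p_b\cap L^p_a$-closeness does not go through. Note also that attainability of the relevant infimum/supremum (Propositions \ref{min} and \ref{inf and max prop}) must be established before any first-order conditions can be used.

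Two further inaccuracies: $\tilde\delta_{p,a,b}$ is \emph{not} dilation-invariant (only $\delta_{p,a,b}$ is); the correct statement is that the balancing dilation gives $\tilde\delta(u_{\mu_*})=p\,\delta(u)\le\tilde\delta(u)$, which happens to work in your favor but should be stated correctly. And the exponent $1/p$ for $p>2$ does not come from a Taylor expansion of the norm-balancing function: in the paper it arises from the quadratic error term $\|\bar v\|_{L^p}^{p-2}\|u-\bar v\|_{L^p}^2$ in Lemma \ref{primary}(ii) combined with the $\delta^{1/(2p)}$ rate of Proposition \ref{pro}. Your balancing/undoing-the-dilation bookkeeping (paragraphs two and three) is essentially the content of the short deductions of Theorems \ref{main p=2} and \ref{main} \emph{from} Theorem \ref{plus version}, i.e.\ the easy direction, not the theorem you were asked to prove.
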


Next, we state results for the second-order CKN inequalities. With identity \eqref{identity-I of 2nd order}, we can establish the following stability results for radial functions $u\in H_{b,a}^{p,p}$ under some conditons for $N,p,a,b$:
\begin{theorem}\label{main I - 2nd order}
    Assume $N\ge 1$, $2\le p<N$, $1-N\le b<\frac{N(1-p)}{p}$, $b-a+1>0$ and $b+(p-1)a+1 = \frac{Npb}{N-p}+\frac{p^2(N-1)}{N-p}$. Then their exists a constant $C$ depending only on $N,p,a,b$, such that for all radial function $u\in H_{b,a}^{p,p}$, it holds
    \begin{equation}\label{2nd weak inequalities}
        \begin{aligned}
            &\|\Delta u\|_{L_b^p}\|u\|_{H_a^p}^{p-1} - \frac{-b-(p-1)a-(p-1)(N-1)}{p}\|u\|_{H_c^p}^p\\
             & \ge{} C(N,p,a,b) \inf_{v\in\mathcal{M}_{p,a,b}^2} \|u-v\|_{H_c^p}^p,
        \end{aligned}
    \end{equation}
    where $\mathcal{M}_{p,a,b}^2$ is the set of minimizers:
    \[ \mathcal{M}_{p,a,b}^2 \coloneqq \left\{v(x) = \Lambda \int_{|x|}^\infty r^{1-N}\exp\left(-\frac{\lambda r^{b-a+1}}{b-a+1}\right)\,dr:\Lambda\in\R,\, \lambda>0\right\}.\]
\end{theorem}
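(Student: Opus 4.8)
The plan is to reduce the second-order CKN inequality for radial functions to the first-order CKN inequality via the substitution $w = \nabla u$, more precisely to the scalar radial function $\phi(r) = -u'(r)$ (so that $x \cdot \nabla u = r u'(r) = -r\phi(r)$), and then to invoke the already-established stability machinery. For a radial $u$, we have $|\nabla u| = |\phi|$ and $\Delta u = \phi' + \tfrac{N-1}{r}\phi$ up to signs; the point is that on radial functions the last term of identity \eqref{identity-I of 2nd order}, namely $(x\cdot\nabla u)^2 - |x|^2|\nabla u|^2$, vanishes identically, so \eqref{identity-I of 2nd order} collapses to
\begin{equation*}
    \|\Delta u\|_{L_b^p}\|u\|_{H_a^p}^{p-1} - \frac{-b-(p-1)a-(p-1)(N-1)}{p}\|u\|_{H_c^p}^p
    = \frac 1p \int_{\R^N} \frac{1}{|x|^{pb}}\mathcal{R}_p\!\left(-\lambda^{1/p}|x|^{b-a-1}x\cdot\nabla u,\ \lambda^{-(p-1)/p}\Delta u\right)dx,
\end{equation*}
with $\lambda = \|\Delta u\|_{L_b^p}/\|u\|_{H_a^p}$ (using $2\le p<N$ and the sign conventions so that the deficit is nonnegative). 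Thus the second-order deficit is \emph{exactly} a weighted $\mathcal{R}_p$-integral of the pair $(x\cdot\nabla u,\ \Delta u)$ --- which is structurally identical to the integrand appearing in the first-order theory, where the deficit controls $\int |x|^{-pb}\mathcal{R}_p(-\lambda^{1/p}|x|^{b-a}\,|x|^{-1}x\cdot\nabla v,\ \lambda^{-(p-1)/p}(\text{something}))$. So the first step is to make this dictionary precise: identify which first-order CKN triple $(\tilde p,\tilde a,\tilde b)$ and which substitution $v\mapsto$ (function of $u'$) turns the radial second-order problem into the radial first-order problem, matching the scaling condition $b+(p-1)a+1 = \tfrac{Npb}{N-p}+\tfrac{p^2(N-1)}{N-p}$ to the first-order criticality $(p-1)\tilde a+\tilde b+1 = \tfrac{p\tilde b N}{N-p}$.

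The second step is to run the $\mathcal{R}_p$-coercivity argument that underlies Theorem \ref{main} (and, earlier, \cite{Do2023}): one shows that the integral $\frac1p\int |x|^{-pb}\mathcal{R}_p(s,t)\,dx$, where $s = -\lambda^{1/p}|x|^{b-a-1}x\cdot\nabla u$ and $t = \lambda^{-(p-1)/p}\Delta u$, is bounded below by a constant times $\inf_v \|u-v\|_{H_c^p}^p$ over $v\in\mathcal{M}_{p,a,b}^2$. The mechanism is: (i) $\mathcal{R}_p(s,t)\ge 0$ with equality iff $s=t$, and in fact $\mathcal{R}_p(s,t)\ge c_p\min(|s|,|t|)^{p-2}|s-t|^2$ or the cleaner bound $\mathcal{R}_p(s,t)\ge c_p|s-t|^p$ when $p\ge 2$ (this is the standard $p$-uniform-convexity-type inequality, valid precisely because $p\ge 2$); (ii) $s=t$ pointwise is exactly the Euler--Lagrange/ODE characterization of $\mathcal{M}_{p,a,b}^2$, i.e. $\lambda^{-(p-1)/p}\Delta u = -\lambda^{1/p}|x|^{b-a-1}x\cdot\nabla u$ forces $u$ radial to be of the stated integral form (this is how \cite{chen2024} derived their minimizers); (iii) therefore $\int |x|^{-pb}|s-t|^p\,dx \gtrsim \inf_v \|u-v\|_{H_c^p}^p$ after unwinding the weights --- here $|s-t|^p$ with weight $|x|^{-pb}$ must be shown to dominate $|u-v|^p$ with weight $|x|^{-pc}$ for a well-chosen $v$, which is the one-dimensional radial-ODE stability estimate: writing everything as an ODE in $\phi(r) = -u'(r)$ on $(0,\infty)$ with measure $r^{N-1}dr$, the quantity $\|s-t\|$ controls a first-order differential expression in $\phi$, and a Hardy/Poincaré-type inequality on the half-line recovers control of $\phi$ itself (hence of $u$ via one more integration, picking $v$'s free parameters $\Lambda,\lambda$ to match the two integration constants).

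The third step is bookkeeping: choose $\lambda>0$ in the definition of the target minimizer to be the $\lambda$ appearing in the identity (so that the ``center'' of the $\mathcal{R}_p$ functional is the minimizer we compare against), choose $\Lambda$ to match the remaining degree of freedom, verify that the resulting $v$ indeed lies in $\mathcal{M}_{p,a,b}^2$ and that $\|u-v\|_{H_c^p}<\infty$, and track that all constants depend only on $N,p,a,b$. The main obstacle I expect is step (iii): converting the pointwise lower bound $\mathcal{R}_p(s,t)\gtrsim |s-t|^p$ into the \emph{global} estimate $\int|x|^{-pb}|s-t|^p \gtrsim \inf_v\|u-v\|_{H_c^p}^p$. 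This is not automatic because $s-t$ is a differential expression (a combination of $u'$ and $\Delta u$), so one needs a weighted one-dimensional Hardy-type inequality on $(0,\infty)$ --- essentially ``the deficit controls the residual of the linearized ODE, and the linearized ODE operator has a one-dimensional kernel (the minimizer directions), so away from that kernel it is invertible with a quantitative bound.'' The restriction $p\ge 2$ enters crucially here (for $1<p<2$ the clean $|s-t|^p$ lower bound fails and one only gets the degenerate $\min(|s|,|t|)^{p-2}|s-t|^2$), which is exactly why the theorem is stated for $2\le p<N$; and the restriction to radial $u$ is what kills the extra angular term in \eqref{identity-I of 2nd order}, so extending beyond radial functions would require controlling $\int |x|^{-b-(p-1)a-3}|\nabla u|^{p-2}(|x|^2|\nabla u|^2-(x\cdot\nabla u)^2)\,dx$ by the deficit as well, which is a genuinely harder (and here avoided) problem.
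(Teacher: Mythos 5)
Your skeleton coincides with the paper's argument: for radial $u$ the angular term in \eqref{identity-I of 2nd order} vanishes, the condition $1-N\le b<\frac{N(1-p)}{p}$ forces $\epsilon=-1$, and for $p\ge 2$ one uses $\mathcal{R}_p(s,t)\ge m_p|s-t|^p$, so the deficit dominates $\frac{m_p}{p}\int |x|^{-pb}\big|\lambda^{1/p}|x|^{b-a-1}x\cdot\nabla u+\lambda^{-(p-1)/p}\Delta u\big|^p\,dx$. The problem is that everything after this point is exactly the step you label as ``the main obstacle'' and then leave unresolved: your step (iii) invokes an unspecified ``Hardy/Poincar\'e-type inequality on the half-line'' plus a spectral heuristic (``the linearized operator is invertible away from its kernel''), but no such quantitative statement is proved or even precisely formulated, and it is the entire content of the theorem. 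The paper closes this gap concretely: for radial $u$ the residual is an \emph{exact} derivative,
\begin{equation*}
\lambda^{1/p}|x|^{b-a-1}x\cdot\nabla u+\lambda^{-\frac{p-1}{p}}\Delta u=\lambda^{-\frac{p-1}{p}}e^{-\frac{\lambda r^{b-a+1}}{b-a+1}}\,r^{1-N}\frac{d}{dr}\Big(r^{N-1}e^{\frac{\lambda r^{b-a+1}}{b-a+1}}u'(r)\Big),
\end{equation*}
and then one applies a specific weighted $L^p$-Poincar\'e inequality with exponential weight (Lemma \ref{Lemma Poincare}, i.e.\ \cite[Cor.~4.1]{Do2023}) to the function $V(x)=|x|^{N-1}e^{\frac{\lambda|x|^{b-a+1}}{b-a+1}}u'(|x|)$, with $\mu=pb+p(N-1)$, $\gamma=b-a+1$, $\tilde\lambda=\lambda^{1/(b-a+1)}$. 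The hypotheses of that lemma ($0\le\mu<N-p$, $\gamma\ge\frac{N-p-\mu}{N-p}$) and the identity $\frac{N\mu}{N-p}+(N-1)p=pc$ are precisely what the assumptions $1-N\le b<\frac{N(1-p)}{p}$ and $b+(p-1)a+1=\frac{Npb}{N-p}+\frac{p^2(N-1)}{N-p}$ guarantee; none of this bookkeeping appears in your proposal, and without it one cannot verify the result holds under the stated hypotheses.

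Two further points. First, your step 1 ``dictionary'' reducing to a first-order CKN stability statement for $\phi=-u'$ is not a mere relabeling and is not what is needed: $\Delta u=\phi'+\frac{N-1}{r}\phi$ carries the extra $\frac{N-1}{r}\phi$ term, so the pair $(s,t)$ does not match the first-order radial pair for any triple $(\tilde p,\tilde a,\tilde b)$ until you absorb that term --- which is exactly the divergence/integrating-factor manipulation above, after which the Poincar\'e lemma is applied directly rather than any first-order stability theorem (note also that the first-order stability of \cite{Do2023} controls an $L^p_c$-distance, whereas here you need the gradient-level distance $\|u-v\|_{H^p_c}$). Second, there is no ``one more integration'' and no two integration constants to match: $\|u-v\|_{H^p_c}$ is already a gradient norm, and the single constant $c$ appearing in the Poincar\'e infimum corresponds exactly to the parameter $\Lambda$ of $\mathcal{M}^2_{p,a,b}$ with $\lambda$ fixed at the value from the identity, since $\nabla v(c,\lambda)=-c\,|x|^{1-N}e^{-\frac{\lambda|x|^{b-a+1}}{b-a+1}}\frac{x}{|x|}$. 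So the strategy is right, but as written the proof is incomplete at its decisive quantitative step.
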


Using this weak stability result, we can then obtain a strong-form version:
\begin{theorem}\label{main II - 2nd order}
    Under the same conditions as in Theorem \ref{main I - 2nd order}, their exists a constant $C$ depending only on $N,p,a,b$, such that for all radial function $u\in H_{b,a}^{p,p}$, it holds
    \begin{equation}\label{2nd strong inequalities}
        \inf_{v\in\mathcal{M}_{p,a,b}^2}\frac{\|\Delta(u-v)\|_{L_b^p}\|u-v\|_{H_a^p}^{p-1}}{\|\Delta u\|_{L_b^p}\|u\|_{L^p_a}^{p-1}} \le C \sigma_{p,a,b}(u)^{1/p}
    \end{equation}
    if $p>2$;
    \begin{equation}\label{2nd strong inequalities p=2}
        \inf_{v\in\mathcal{M}_{2,a,b}^2}\frac{\|\Delta(u-v)\|_{L_b^2}\|u-v\|_{H_a^2}}{\|\Delta u\|_{L_b^2}\|u\|_{L^2_a}} \le C \sigma_{2,a,b}(u)
    \end{equation}
    if $p=2$.
\end{theorem}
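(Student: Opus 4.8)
\noindent\emph{Strategy.} The plan is to reproduce, in the second-order radial setting, the mechanism by which Theorems \ref{main p=2} and \ref{main} are obtained from their weak-type counterparts: first upgrade the control on $\|u-v\|_{H_c^p}$ furnished by Theorem \ref{main I - 2nd order} to simultaneous control of $\|\Delta(u-v)\|_{L_b^p}$ and $\|u-v\|_{H_a^p}$, and then recombine the two through Hölder's inequality. Throughout one uses that every ratio in \eqref{2nd strong inequalities}--\eqref{2nd strong inequalities p=2}, and the set $\mathcal{M}_{p,a,b}^2$, are invariant under $u\mapsto\mu\,u(\lambda\,\cdot)$ with $\mu\in\R\setminus\{0\}$, $\lambda>0$. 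Since $b-a+1>0$, a dilation changes $\|\Delta u\|_{L_b^p}/\|u\|_{H_a^p}$ by a nontrivial power of $\lambda$, so one may normalise $\|\Delta u\|_{L_b^p}=\|u\|_{H_a^p}=1$; this is exactly the value for which the constant $\lambda$ in Chen--Tang's identity \eqref{identity-I of 2nd order} equals $1$, for which $\|u\|_{H_c^p}^p=(K_{p,a,b}+\sigma_{p,a,b}(u))^{-1}$, and for which the auxiliary deficit $\tilde\sigma_{p,a,b}(u)\coloneqq\bigl(\|\Delta u\|_{L_b^p}^p+(p-1)\|u\|_{H_a^p}^p\bigr)\|u\|_{H_c^p}^{-p}-pK_{p,a,b}$ satisfies $\tilde\sigma_{p,a,b}(u)=p\,\sigma_{p,a,b}(u)$ (in general $\tilde\sigma\ge p\sigma$ by the weighted AM--GM inequality, with equality precisely under this normalisation). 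One may also assume $\sigma_{p,a,b}(u)\le\sigma_0(N,p,a,b)$ small, the complementary range being trivial by enlarging $C$.

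\noindent\emph{Upgrading the weak estimate.} Theorem \ref{main I - 2nd order} gives a comparison minimizer achieving $\inf_{v\in\mathcal{M}_{p,a,b}^2}\|u-v\|_{H_c^p}^p\le C^{-1}\sigma_{p,a,b}(u)\|u\|_{H_c^p}^p$, so this distance is as small as we like. On $\mathcal{M}_{p,a,b}^2$ one has $\Delta v=-\lambda\,|x|^{b-a-1}(x\cdot\nabla v)$, whence $\|\Delta v\|_{L_b^p}/\|v\|_{H_a^p}=\lambda$ while $\|v\|_{H_c^p}$ and $\|v\|_{H_a^p}$ are explicit positive powers of $\Lambda$ and $\lambda$; together with the normalisation of Step 1 and a compactness argument (as in the proof of the weak form, used as a black box), this lets us take the comparison minimizer, call it $v^\ast$, in the slice $\lambda=1$ at the cost of an additive error $O\bigl(\sigma_{p,a,b}(u)^{1/p}\bigr)$ in what follows, so that $\Delta v^\ast=-|x|^{b-a-1}(x\cdot\nabla v^\ast)$ and $\Lambda^\ast$ stays in a fixed compact set. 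Now insert $\lambda=1$ into \eqref{identity-I of 2nd order} and note that its second integral vanishes identically for radial $u$ (since $x\cdot\nabla u=|x|\,\partial_r u$ gives $(x\cdot\nabla u)^2=|x|^2|\nabla u|^2$):
\begin{equation*}
\sigma_{p,a,b}(u)\,\|u\|_{H_c^p}^p=\frac1p\int_{\R^N}|x|^{-pb}\,\mathcal{R}_p\bigl(-|x|^{b-a-1}(x\cdot\nabla u),\ \Delta u\bigr)\,dx .
\end{equation*}
Write $L\psi\coloneqq\Delta\psi+|x|^{b-a-1}(x\cdot\nabla\psi)$, an ordinary differential operator on radial functions whose null space inside $\mathcal{H}_{b,a}^{p,p}$ is exactly the one-dimensional slice $\{\lambda=1\}$ of $\mathcal{M}_{p,a,b}^2$. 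The convexity bound $\mathcal{R}_p(s,t)\ge c_p|s-t|^p$ for $p\ge2$ (with $\mathcal{R}_2(s,t)=(s-t)^2$) yields $\int|x|^{-pb}|Lu|^p\lesssim\sigma_{p,a,b}(u)$, and since $L(u-v^\ast)=Lu$ one gets $\|L(u-v^\ast)\|_{L_b^p}\lesssim\sigma_{p,a,b}(u)^{1/p}$. A weighted Hardy/Poincaré-type coercivity estimate inverting $L$ on radial functions --- degenerate-elliptic for $p>2$, applied in the coercive regime granted by the a priori smallness above, and absorbing the $O(\sigma^{1/p})$ error coming from the choice of $v^\ast$ --- then produces the second-order analogue of Theorem \ref{plus version}:
\begin{equation*}
\inf_{v\in\mathcal{M}_{p,a,b}^2}\Bigl(\|\Delta(u-v)\|_{L_b^p}^p+(p-1)\|u-v\|_{H_a^p}^p\Bigr)\ \lesssim\ \tilde\sigma_{p,a,b}(u)^{1/p}\qquad(p>2),
\end{equation*}
and the same with exponent $1$ for $p=2$ (there $\mathcal{R}_2$ is a perfect square and the coercivity loses nothing).

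\noindent\emph{Recombination.} Taking $v^\ast$ (nearly) optimal in the last display, $\|\Delta(u-v^\ast)\|_{L_b^p}$ and $\|u-v^\ast\|_{H_a^p}$ are each $\lesssim\tilde\sigma_{p,a,b}(u)^{1/p^2}$ when $p>2$, so by Hölder
\begin{equation*}
\|\Delta(u-v^\ast)\|_{L_b^p}\,\|u-v^\ast\|_{H_a^p}^{\,p-1}\ \lesssim\ \tilde\sigma_{p,a,b}(u)^{1/p^2}\cdot\tilde\sigma_{p,a,b}(u)^{(p-1)/p^2}=\tilde\sigma_{p,a,b}(u)^{1/p}=\bigl(p\,\sigma_{p,a,b}(u)\bigr)^{1/p}.
\end{equation*}
Since the denominator of \eqref{2nd strong inequalities} equals $1$ after the normalisation of Step 1, passing to the infimum over $\mathcal{M}_{p,a,b}^2$ gives \eqref{2nd strong inequalities}. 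For $p=2$ the identical computation, starting from the exponent-$1$ version of the previous step, gives $\|\Delta(u-v^\ast)\|_{L_b^2}\|u-v^\ast\|_{H_a^2}\lesssim\tilde\sigma_{2,a,b}(u)=2\,\sigma_{2,a,b}(u)$, which is \eqref{2nd strong inequalities p=2}.

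\noindent\emph{Main obstacle.} The crux is the coercivity/invertibility of $L$ on radial functions: one must show that the smallness of $\int|x|^{-pb}\mathcal{R}_p\bigl(-|x|^{b-a-1}(x\cdot\nabla u),\Delta u\bigr)\,dx$ --- together with the $\|\cdot\|_{H_c^p}$-closeness from Theorem \ref{main I - 2nd order}, which is what both localises the comparison minimizer (down to the slice $\lambda=1$, modulo an $O(\sigma^{1/p})$ error) and keeps one in the range where the (for $p>2$ degenerate) coercivity applies --- controls $\|\Delta(u-v^\ast)\|_{L_b^p}$ and $\|u-v^\ast\|_{H_a^p}$. Proving a clean version of this weighted one-dimensional estimate, and correctly treating the tails where $v^\ast$ decays super-exponentially --- which is also where converting the $\mathcal{R}_p$-controlled quantity into the $L^p$-type norms costs the exponent $1/p$ for $p>2$, exactly as in the first-order Theorem \ref{main} --- is the main technical burden; the radial hypothesis is what makes this feasible, since it kills the angular term in \eqref{identity-I of 2nd order} and turns $L$ into a one-variable operator with a one-dimensional kernel.
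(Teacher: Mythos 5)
Your overall architecture (weak $H_c^p$-stability $\Rightarrow$ sum-form stability for $\|\Delta(u-v)\|_{L_b^p}^p+(p-1)\|u-v\|_{H_a^p}^p$ $\Rightarrow$ product form by scaling/AM--GM) matches the paper, but the middle step is where your proof has a genuine gap: the ``weighted Hardy/Poincar\'e-type coercivity estimate inverting $L$'' is only asserted, and it is precisely the hard analytic content. From the identity \eqref{identity-I of 2nd order} you get $\|L(u-v^\ast)\|_{L_b^p}\lesssim\sigma^{1/p}$ with $L\psi=\Delta\psi+|x|^{b-a-1}x\cdot\nabla\psi$, and from Theorem \ref{main I - 2nd order} you get smallness of $\|u-v^\ast\|_{H_c^p}$; but since $c-a=\frac{b-a+1}{p}>0$, the weight $|x|^{-a}$ dominates $|x|^{-c}$ for $|x|$ large, so $\|u-v^\ast\|_{H_a^p}$ is \emph{not} controlled by $\|u-v^\ast\|_{H_c^p}$, and no estimate in the paper (Lemma \ref{Lemma Poincare} included) converts $\|Lw\|_{L_b^p}+\|w\|_{H_c^p}$ into $\|w\|_{H_a^p}+\|\Delta w\|_{L_b^p}$. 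In addition, your localization of the comparison minimizer to the slice $\lambda=1$ is unjustified and risks circularity: $H_c^p$-closeness of $u$ to $v(k_0,\lambda_0)$ together with the normalization $\|\Delta u\|_{L_b^p}=\|u\|_{H_a^p}=1$ does not pin $\lambda_0$ near $1$ unless you already control $\|\Delta(u-v)\|_{L_b^p}$ and $\|u-v\|_{H_a^p}$, which is what you are trying to prove; and a Proposition~\ref{pro}-type argument only yields a square-root-type estimate on the parameter, so even granting it, your $p=2$ error budget fails: an additive error of order $\sigma^{1/2}$ from the slice replacement caps the rate at $\sigma^{1/2}$, not the linear rate claimed in \eqref{2nd strong inequalities p=2}.

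The paper avoids inverting $L$ altogether. It defines the projection $Q_u$ by maximizing $\int|\nabla w|^{p-2}\nabla w\cdot\nabla u$ over normalized elements of $\mathcal{M}^2_{p,a,b}$, proves attainability and the quantitative bound $\|u-\mu_v v\|_{H_c^p}\lesssim\sigma^{1/(2p)}\|u\|_{H_c^p}$ (the analogue of Proposition \ref{pro}), extracts the three orthogonality conditions \eqref{2nd orth-111} from the Euler--Lagrange equation \eqref{eq2}, the maximality in $\lambda$, and \eqref{norm of v 2nd}, and then applies the pointwise Figalli--Zhang inequalities (Lemma \ref{primary}) to $(\Delta\bar v,\Delta u-\Delta\bar v)$ and to the weighted gradients; the cross terms vanish by orthogonality and the error terms are absorbed using only the $H_c^p$-weak stability, exactly as in the proof of Theorem \ref{plus version}. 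For $p=2$ one takes the $H_c^2$-minimizer itself, where the orthogonality is exact, so no sublinear error appears and the linear rate survives. If you want to salvage your route, you would have to actually prove the weighted coercivity for $L$ on radial functions (including the large-$|x|$ regime where $|x|^{-a}$ beats $|x|^{-c}$) and find a non-circular way to fix the scaling parameter; as written, these steps are missing rather than merely technical.
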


\vskip0.2in

The paper is organized as follows: In Section 2, we provide necessary preliminaries to establish our work. Especially, we give a statement for the variational problem $\inf_{v\in\mathcal{M}_{p,a,b}}\|u-v\|_{L^p_c}$. The proof of Proposition \ref{prop1} and Theorem \ref{plus version} are presented in Section 3. We will then use them to derive Theorem \ref{main p=2} and \ref{main}. The discussions for the second-order CKN inequalities are included in Section 4.

\section{Preliminaries}\label{section 2}

In the following, the notation $A\lesssim B$ (resp. $A\gtrsim B$) means there exists a constant $C$ depending at most on $N,p,a,b$ such that $A\le CB$ (resp. $A\ge CB$). We say $A\sim B$ if $A\lesssim B$ and $A\gtrsim B$.

\subsection{Some  observations for the CKN inequalities}
\begin{itemize}
    \item When $(p,a,b)$ satisfy conditions in Theorem \ref{main} or conditions (1) in Theorem \ref{main p=2}, then $S_{p,a,b}= \frac{N-1-(p-1)a-b}{p}$. Functions in $\mathcal{M}_{p,a,b}$ can be written as
    \[v(k,\lambda) = kC_1\lambda^{\alpha}\exp\left(-\frac{\lambda}{b+1-a}|x|^{b+1-a}\right),\]
    where
    \[ \alpha = \frac{N-1-(p-1)a-b}{(b+1-a)p} = \frac{S_{p,a,b}}{b+1-a},\]
    and
    \begin{align*}
        C_1 ={}& \left\|\exp\left(-\frac{|x|^{b+1-a}}{b+1-a}\right)\right\|_{L^p_c}^{-1} \\
        ={}& \left( V(\mathbb{S}^{N-1}) \int_0^\infty r^{-pc+N-1} \exp\left(-\frac{pr^{b+1-a}}{b+1-a}\right)\,dr \right)^{-1/p} \\
        ={}& \frac{p^\alpha V(\mathbb{S}^{N-1})^{-1/p}}{(b+1-a)^{\alpha-1/p}}\Gamma\left(\frac{pS_{p,a,b}}{b+1-a}\right)^{-1/p}.
    \end{align*}
    Then $\|v(k,\lambda)\|_{L^p_c} = |k|$. We can also compute its $L^p_a$-norm:
    \begin{align*}
        \|v(k,\lambda)\|_{L^p_a} ={}& |k|C_1\lambda^{-1/p}\left(V(\mathbb{S}^{N-1})\int_0^\infty r^{-pa+N-1}\exp\left(-\frac{pr^{b+1-a}}{b+1-a}\right)\,dr\right)^{1/p} \\
        ={}& |k|\lambda^{-1/p} \left(\frac{b+1-a}{p}\right)^{1/p}\left(\frac{pS_{p,a,b}}{b+1-a}\right)^{1/p}\\
        ={}& |k|\left(\frac{S_{p,a,b}}{\lambda}\right)^{1/p}.
    \end{align*}

    \item When $p=2$ and $(a,b)$ satisfy condition (2) in Theorem \ref{main p=2}, then $S_{2,a,b} = \frac{3b-a-N+3}{2}$.

    Functions in $\mathcal{M}_{2,a,b}$ can be written as
    \[ v(k,\lambda) = k C_1\lambda^\alpha |x|^{2b+2-N}\exp\left(-\frac{\lambda}{b+1-a}|x|^{b+1-a}\right),\]
    where
    \begin{align*}
        & \alpha = \frac{3b+3-a-N}{2(b+1-a)} = \frac{S_{2,a,b}}{b+1-a}, \\
        & C_1 = \left\| |x|^{2b+2-N} \exp\left(-\frac{|x|^{b+1-a}}{b+1-a}\right)\right\|_{L^2_c}^{-1}.
    \end{align*}
    Then $\|v(k,\lambda)\|_{L^2_c} = |k|$. Similarly, we can also get $\|v(k,\lambda)\|_{L^p_a} = |k|\left(\frac{S_{2,a,b}}{\lambda}\right)^{1/2}$.
\end{itemize}

It is easy to see that the norms of $v=v(k,\lambda)$ have the following relations (in both cases):
\begin{equation}\label{norm of v}
    \|v\|_{H^{p}_b} = \lambda \|v\|_{L^p_a},\quad \|v\|_{L^p_a} = \left(\frac{S_{p,a,b}}{\lambda}\right)^{1/p}\|v\|_{L^p_c}.
\end{equation}

Next, since functions in $\mathcal{M}_{p,a,b}$ minimize the following energy functional
\[ J(u) = \delta_{p,a,b}(u)\|u\|_{L^p_c}^p \ge 0,\]
we immediately get that $v(k,\lambda)\in\mathcal{M}_{p,a,b}$ satisfies the following equation:
\begin{equation}\label{eq}
        -\Div(|x|^{-pb}|\nabla v|^{p-2}\nabla v) + (p-1)\lambda^p|x|^{-pa}|v|^{p-2}v - pS_{p,a,b}\lambda^{p-1}|x|^{-pc}|v|^{p-2}v = 0.
\end{equation}

Last but not least, we give a statement about the attainability of the variational problem $\inf_{v\in\mathcal{M}_{p,a,b}}\|u-v\|_{H_c^p}$, which is an essential step to derive the stability results.

\vskip0.2in

\begin{proposition}\label{min}
    Let $N\ge 1$, $p\ge 2$. Suppose $(p,a,b)$ satisfy conditions in Theorem \ref{main} or Theorem \ref{main p=2}, then there exists $\delta_0>0$ such that for any $u\in H_b^p\cap L_a^p$ with $\delta_{p,a,b}(u)<\delta_0$, the following infimum is attainable:
    \begin{equation}\label{inf of u-v}
        \inf_{w\in\mathcal{M}_{p,a,b}}\|u-w\|_{L_c^p}.
    \end{equation}
\end{proposition}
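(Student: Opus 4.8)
The plan is to set up a standard direct-method argument for the minimization problem $\inf_{w\in\mathcal{M}_{p,a,b}}\|u-w\|_{L_c^p}$, where the family $\mathcal{M}_{p,a,b}$ is parametrized by $w=v(k,\lambda)$ with $(k,\lambda)\in\R\times(0,\infty)$. First I would observe that the infimum is finite and nonnegative; taking a minimizing sequence $v(k_n,\lambda_n)$, I want to show $(k_n,\lambda_n)$ stays in a compact subset of $\R\times(0,\infty)$, i.e. $k_n$ is bounded and $\lambda_n$ is bounded away from both $0$ and $\infty$. The key quantitative leverage here is the smallness of the deficit: if $\delta_{p,a,b}(u)<\delta_0$, then $u$ is close (in the sense of the inequality chain, after normalizing $\|u\|_{L^p_c}=1$) to the manifold of minimizers, so in particular $\inf_w\|u-w\|_{L^p_c}$ is small, which forces any near-optimal $v(k_n,\lambda_n)$ to have $\|v(k_n,\lambda_n)\|_{L^p_c}=|k_n|$ close to $\|u\|_{L^p_c}$, giving boundedness of $k_n$ and a positive lower bound for $|k_n|$.

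Next I would control $\lambda_n$. The mechanism is that as $\lambda\to\infty$ the Gaussian-type profile $v(k,\lambda)$ concentrates near the origin, while as $\lambda\to 0$ it spreads out; in either degenerate regime one can exhibit, for fixed $u$, a uniform lower bound on $\|u-v(k,\lambda)\|_{L^p_c}$ bounded below by a positive constant depending on $u$ (for instance by testing against a fixed cutoff region where $u$ has definite mass but $v(k,\lambda)$ has vanishing mass, or vice versa). Combined with the fact that along a minimizing sequence $\|u-v(k_n,\lambda_n)\|_{L^p_c}$ is small (again using $\delta_{p,a,b}(u)<\delta_0$ and that the distance to $\mathcal M_{p,a,b}$ is small on the sublevel set of the deficit — this should be an easy consequence of the CKN inequality being strict off the minimizer set, or can be arranged by choosing $\delta_0$ small), this rules out $\lambda_n\to 0$ and $\lambda_n\to\infty$. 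Hence $(k_n,\lambda_n)$ lies in a compact set, so up to a subsequence it converges to some $(k_*,\lambda_*)\in\R\times(0,\infty)$.

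Finally, I would pass to the limit: since $k\mapsto v(k,\lambda)$ and $\lambda\mapsto v(k,\lambda)$ depend continuously on $(k,\lambda)$ in the $L^p_c$-norm (explicit exponential integrals, so dominated convergence applies on compact parameter sets), we get $v(k_n,\lambda_n)\to v(k_*,\lambda_*)$ in $L^p_c$, and therefore $\|u-v(k_*,\lambda_*)\|_{L^p_c}=\lim_n\|u-v(k_n,\lambda_n)\|_{L^p_c}=\inf_{w\in\mathcal{M}_{p,a,b}}\|u-w\|_{L^p_c}$, so the infimum is attained.

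The main obstacle is the compactness of $\lambda_n$ — specifically, making rigorous and uniform (in the relevant class of $u$) the claim that the degenerate regimes $\lambda\to 0$ and $\lambda\to\infty$ keep $\|u-v(k,\lambda)\|_{L^p_c}$ bounded away from zero. One clean way to do this is to exploit the scaling structure: the transformation $u(x)\mapsto \mu^{\gamma}u(\mu x)$ (for the appropriate homogeneity exponent $\gamma$) is an isometry for all three norms $\|\cdot\|_{H^p_b}$, $\|\cdot\|_{L^p_a}$, $\|\cdot\|_{L^p_c}$ and maps $v(k,\lambda)$ to $v(k',\lambda')$ with $\lambda'=\mu^{a-b-1}\lambda$; thus after rescaling we may normalize $\lambda_n\equiv 1$ along the sequence, reducing the problem to boundedness of $k_n$ alone, which follows from the triangle inequality and $\|v(k_n,1)\|_{L^p_c}=|k_n|$. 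This scaling reduction is the cleanest route and I expect the bulk of the write-up to be checking that the rescaled sequence still converges weakly to a limit that is itself in $\mathcal{M}_{p,a,b}$ (here one uses $\delta_0$ small, so the rescaled $u$'s stay in a fixed neighborhood and weak limits cannot escape), together with a lower-semicontinuity/weak-convergence argument to conclude $\|u-v(k_*,1)\|_{L^p_c}\le\liminf\|u-v(k_n,1)\|_{L^p_c}$.
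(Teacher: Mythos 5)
Your main line of argument --- take a minimizing sequence $v(k_n,\lambda_n)$, bound $k_n$ by the triangle inequality using $\|v(k,\lambda)\|_{L^p_c}=|k|$, exclude $\lambda_n\to 0$ and $\lambda_n\to\infty$ because $|x|^{-c}v(k_n,\lambda_n)$ then vanishes on a fixed annulus carrying a definite fraction of $\int |x|^{-pc}|u|^p$, and pass to the limit by dominated convergence --- is essentially the paper's proof. However, one step is not justified as you state it: the smallness of $\inf_{w\in\mathcal{M}_{p,a,b}}\|u-w\|_{L^p_c}$ when the deficit is small is \emph{not} ``an easy consequence of the CKN inequality being strict off the minimizer set''; strictness alone gives no quantitative control. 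The paper obtains it from the quantitative weak stability inequality \eqref{weak stability} (and \eqref{weak stability 2} for case (2) of Theorem \ref{main p=2}) quoted from Do--Flynn--Lam--Lu, which gives $\inf_w\|u-w\|_{L^p_c}^p\lesssim \delta_{p,a,b}(u)\,\|u\|_{L^p_c}^p$. This input is essential: since $0\in\mathcal{M}_{p,a,b}$, the infimum is always at most $\|u\|_{L^p_c}$, while the degenerate regimes only force the distance to be at least a fixed fraction of $\|u\|_{L^p_c}$, so without the weak stability bound there is no contradiction (this is precisely why the hypothesis $\delta_{p,a,b}(u)<\delta_0$ is needed). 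Relatedly, to get a \emph{universal} $\delta_0$ you should choose the annulus so that it carries, say, more than half of $\|u\|_{L^p_c}^p$, so that the lower bound in the degenerate regime and the upper bound from weak stability both scale like $\|u\|_{L^p_c}$; a lower bound ``depending on $u$'' in an unspecified way does not suffice. Finally, in case (2) of Theorem \ref{main p=2} the minimizers contain the extra factor $|x|^{2b+2-N}$, so the pointwise decay and domination estimates must be redone with that weight, as the paper does separately.

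The scaling ``shortcut'' you propose at the end does not work as described. Normalizing $\lambda_n\equiv 1$ requires an $n$-dependent dilation, which replaces the fixed function $u$ by a sequence of rescaled functions; boundedness of $k_n$ is then trivial, but the genuine difficulty --- compactness of the dilation parameter --- has merely been relocated. If $\lambda_n$ degenerates, the rescaled copies of $u$ concentrate at the origin or spread out to infinity, and their weak limit can perfectly well vanish; thus ``weak limits cannot escape'' is exactly what fails and what must be excluded, and excluding it again requires the weak-stability smallness combined with the concentration/annulus argument. In other words, the shortcut is circular rather than a simplification, and the direct argument you describe first (which coincides with the paper's) is the correct route.
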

\begin{proof}
    We first assume $(p,a,b)$ satisfy conditions in Theorem
    \ref{main} or condition (1) in Theorem \ref{main p=2}, then
    \[ pc = (p-1)a+b+1 = \frac{pbN}{N-p} \in [0, N), \]
    and so $|x|^{-c}\in L^p(\{|x|\le 1\})$. Picking a minimizing sequence $\{v_i=v(k_i,\lambda_i)\}_{i=1}^\infty$, we show $\{k_i\},\,\{\lambda_i\},\,\{1/\lambda_i\}$ are bounded. Since by \eqref{weak stability}
    \begin{equation*}
        \lim_{i\to \infty}\|u-v_i\|_{L^p_c} = \inf_{w\in\mathcal{M}}\|u-w\|_{L^p_c} \lesssim \delta_{p,a,b}(u)^{1/p}\|u\|_{L^p_c},
    \end{equation*}
    then
    \begin{equation}
        \delta_{p,a,b}(u)^{1/p}\|u\|_{L^p_c} \ge \lim_{i\to \infty} \|u-v(k_i,\lambda_i)\|_{L^p_c} \ge \limsup_{i\to \infty} |k_i| - \|u\|_{L^p_c}.
    \end{equation}
    Thus $\{k_i\}$ is bounded: $|k_i|\le K$, $\forall\, i$, and we can pick a subsequence such that $k_i\to k_0\in \R$. If $\lambda_i\to 0$, for any $x\in \R^N\backslash\{0\}$, we have
    \begin{equation*}
        |x|^{-c}|v_i(x)| = |k_i|C_1\lambda_i^{\alpha}|x|^{-c}\exp\left(-\lambda_i\frac{|x|^{b+1-a}}{b+1-a}\right) \le KC_1\lambda_i^{\alpha}|x|^{-c} \to 0,\quad i\to \infty.
    \end{equation*}
    If $\lambda_i\to +\infty$, for any $x\in\R^N\backslash\{0\}$, choosing an integer $m > \alpha$, we have
    \begin{align*}
        |x|^{-c}|v_i(x)| ={}& |k_i|C_1\lambda_i^{\alpha}|x|^{-c}\exp\left(-\lambda_i\frac{|x|^{b+1-a}}{b+1-a}\right)\\
        \le{}& KC_1\lambda_i^{\alpha}|x|^{-c} \cdot \frac{m!}{\left(\lambda_i \frac{|x|^{b+1-a}}{b+1-a}\right)^m}\\
        \lesssim{}& K\lambda_i^{-m + \alpha}|x|^{-m(b+1-a)-c} \to 0, \quad i\to \infty.
    \end{align*}
    Hence, no matter $\lambda_i\to 0$ or $1/\lambda_i \to 0$, we always have $|x|^{-c}v_i(x) \to 0$ pointwise for all $x\in\R^N\backslash\{0\}$. Since $u\in L^p_c$, there exist $\epsilon>0$ and $A>0$ s.t.
    \begin{equation*}
        \int_{\{|x|<\epsilon\}\cup \{|x|>A\}} |x|^{-pc}|u|^p\,dx > \frac 12 \|u\|_{L_c^p}^p.
    \end{equation*}
    In the region $\{\epsilon\le |x|\le A\}$, we have
    \begin{equation*}
        |x|^{-c}|v_i(x)| \lesssim \begin{cases}
            K\epsilon^{-c}\left(\sup \lambda_i\right)^\alpha, & \text{if $\lambda_i\to 0$},\\
            K\epsilon^{-m(b+1-a)-c}\left(\inf \lambda_i\right)^{-m+\alpha}, & \text{if $\lambda_i\to \infty$}.
        \end{cases}
    \end{equation*}
    Since the above dominated functions of $\{|x|^{-c}v_i\}$ are both in $L^p(\{\epsilon\le |x|\le A\})$, the Dominated Convergence Theorem implies
    \begin{align*}
        \delta_0\|u\|_{L^p_c}^p > \delta_{p,a,b}(u)\|u\|_{L^p_c}^p \gtrsim  \lim_{i\to \infty}\|u-v_i\|_{L^p_c}^p \ge{}& \limsup_{i\to \infty} \int_{\epsilon\le |x|\le A} |x|^{-pc}|u-v_i|^p\,dx \\
        ={}& \int_{\epsilon\le |x|\le A} |x|^{-pc}|u|^p\,dx > \frac 12 \|u\|^p_{L^p_c},
    \end{align*}
    which leads to a contradiction if $\delta_0\ll  1$. Thus, $\{\lambda_i\},\,\{1/\lambda_i\}$ are bounded, and we can pick a subsequence such that both $k_i\to k_0\in \R$ and $\lambda_i\to \lambda_0\in\R_+$, and there exist $t,T\in \R_+$ s.t. $0<t\le |\lambda_i|\le T<\infty$. Hence,
    \begin{align*}
        |x|^{-c}|v_i(x)| \lesssim KT^{\alpha}|x|^{-c}\cdot \chi_{\{|x|\le 1\}} + Kt^{-m+\alpha}|x|^{-m(b+1-a)-c}\cdot\chi_{\{|x|\ge 1\}} \in L^p,
    \end{align*}
    and so
    \[ \|u-v_i\|_{L^p_c} \to \|u-v(k_0,\lambda_0)\|_{L^p_c},\]
    showing that $v(k_0,\lambda_0)$ attains the infimum.

    Next, we assume $(p,a,b)$ satisfy condition (2) in Theorem \ref{main p=2}, then
    \[ 2b+2-N-c = \frac{3b-a+1}{2}-N = \frac{N}{4}(b+1-a)-\frac N2 < -\frac N2,\]
    and so $|x|^{2b+2-N-c}\in L^2(\{|x|\le 1\})$. Picking a minimizing sequence $\{v_i=v(k_i,\lambda_i)\}_{i=1}^{\infty}$, then
    \[ \limsup_i |k_i|-\|u\|_{L^2_c} \le \lim_i \|u-v_i\|_{L^2_c} \lesssim \delta_{2,a,b}(u)^{1/2}\|u\|_{L^2_c}\]
    and $\{k_i\}$ is bounded. Next, we show $\{\lambda_i\},\,\{1/\lambda_i\}$ are bounded. If $\lambda_i\to 0$, then
    \[ |x|^{-c}|v_i(x)| \le |k_i|C_1\lambda_i^\alpha |x|^{2b+2-N-c} \to 0,\quad \forall\, x\in \R^N\backslash\{0\},\]
    and
    \[ \delta_{2,a,b}(u)^{1/2}\|u\|_{L^2_c} \gtrsim \lim_i \|u-v_i\|_{L^2_c(\{\epsilon\le |x|\le M\})} = \|u\|_{L^2_c(\{\epsilon\le |x|\le M\})},\quad \forall\, 0<\epsilon<M. \]
    Letting $\epsilon \to 0$ and $M\to \infty$ will lead to a contradiction if $\delta_{2,a,b}(u)\ll 1$.

    If $\lambda_i\to +\infty$, then pick an integer $m>\alpha$,
    \begin{align*}
        |x|^{-c} |v_i(x)| \le{}& |k_i|C_1\lambda_i^\alpha |x|^{2b+2-N-c}\frac{m!}{\left(\lambda_i\frac{|x|^{b+1-a}}{(b+1-a)} \right)^m} \\
        \lesssim{}& K\lambda_i^{\alpha-m} |x|^{2b+2-N-c-m(b+1-a)} \to 0,\quad \forall\, x\in \R^N\backslash\{0\},
    \end{align*}
    and for all $0<\epsilon<M$,
    \begin{align*}
        \delta_{2,a,b}(u)^{1/2}\|u\|_{L^p_c} \gtrsim{}& \lim_i \|u-v_i\|_{L^2_c(\{\epsilon<|x|<M\})}\\
        ={}& \|u\|_{L^2_c(\{\epsilon<|x|<M \})}.
    \end{align*}
    Letting $\epsilon\to 0$ and $M\to \infty$ will lead to a contradiction if $\delta_{2,a,b}(u) \ll 1$. Thus, we may assume $k_i\to k_0\in\R$ and $\lambda_i\to \lambda_0\in\R_+$ with $0<t\le |\lambda_i|\le T<\infty$. Hence,
    \[ |x|^{-c}|v_i(x)| \lesssim KT^\alpha |x|^{2b+2-N-c}\chi_{\{|x|\le 1\}} + Kt^{\alpha-m} |x|^{2b+2-N-c-m(b+1-a)}\chi_{\{|x|>1\}}\in L^2.\]
    Similar to the above, it yields that $v(k_0,\lambda_0)$ attains the infimum.
\end{proof}

\subsection{On the second-order CKN inequalities}
Note that under the condition of Theorem \ref{main II - 2nd order}, $K_{p,a,b}=\frac{-b-(p-1)a-(p-1)(N-1)}{p}$. Functions in $\mathcal{M}_{p,a,b}^2$ can be written as
\[ v(k,\lambda) = k C_3 \lambda^\beta \int_{|x|}^\infty r^{1-N}\exp\left(-\frac{\lambda r^{b-a+1}}{b-a+1}\right)\,dr,\]
where
\[ \beta = \frac{-b-(p-1)a-(p-1)(N-1)}{(b-a+1)p} = \frac{K_{p,a,b}}{b-a+1},\]
and
\begin{align*}
    C_3 = \frac{p^\beta V(\mathbb{S}^{N-1})^{-1/p}}{(b-a+1)^{\beta-1/p}}\Gamma\left(\frac{pK_{p,a,b}}{b-a+1}\right)^{-1/p}.
\end{align*}
We can show by simple calculation that the norms of $v=v(k,\lambda)$ satisfy
\begin{equation}\label{norm of v 2nd}
    \|\Delta v\|_{L^{p}_b} = \lambda \|v\|_{H^p_a},\quad \|v\|_{H^p_a} = \left(\frac{K_{p,a,b}}{\lambda}\right)^{1/p}\|v\|_{H^p_c},\quad \|v\|_{H_c^p} = |k|.
\end{equation}
Also, $v\in \mathcal{M}_{p,a,b}^2$ satisfies the following equation:
\begin{equation}\label{eq2}
    \begin{aligned}
        & -\Delta(|x|^{-pb}|\Delta v|^{p-2}\Delta v) + (p-1)\lambda^p\Div(|x|^{-pa}|\nabla v|^{p-2}\nabla v)\\
        ={}& pK_{p,a,b}\lambda^{p-1}\Div(|x|^{-pc}|\nabla v|^{p-2}\nabla v).
    \end{aligned}
\end{equation}

\section{Strong-form stability results for CKN inequalities}\label{section 3}

For the first part of this section, we show the deficit $\delta_{p,a,b}(u)$ cannot control the $H^p_b$- or $L^p_a$-norms of $u$. For $u\in H^p_b\cap L^p_a$, define $\Phi_\lambda u(x) \coloneqq \lambda^{N/p-c}u(\lambda x)$, then
\begin{align*}
    & \|\Phi_\lambda u\|_{L^p_c} = \|u\|_{L^p_c}; \\
    & \|\Phi_\lambda u\|_{L^p_a} = \lambda^{a-c}\|u\|_{L^p_a}; \\
    & \|\Phi_\lambda u\|_{H^p_b} = \lambda^{b-c+1}\|u\|_{H^p_b},
\end{align*}
implying that
\[ \delta_{p,a,b}(\Phi_\lambda u) = \delta_{p,a,b}(u). \]
It is easy to see $\Phi_{\lambda^{-1}}(\Phi_\lambda u) = u$, and the set of minimizers is invariant under this transformation, i.e.,  $\Phi_\lambda \mathcal{M}_{p,a,b} = \mathcal{M}_{p,a,b}$, $\forall\, \lambda>0$. Thus,
\begin{align*}
    \inf_{v\in\mathcal{M}_{p,a,b}}\|\Phi_\lambda u-v\|_{H^p_b} = \inf_{v\in\mathcal{M}_{p,a,b}}\|\Phi_\lambda(u-v)\|_{H^p_b} = \lambda^{b-c+1} \inf_{v\in \mathcal{M}_{p,a,b}}\|u-v\|_{H^p_b}.
\end{align*}
\begin{proof}[Proof of Proposition \ref{prop1}]
    If there exist $C>0$ and $\alpha>0$ such that for all $u \in H^p_b \cap L^p_a \backslash \{0\}$,
    \begin{align*}
        \left( \frac{\inf_{v\in \mathcal{M}_{p,a,b}}\|u-v\|_{H^p_b} }{\|u\|_{L^p_c}} \right)^\alpha \le C \delta_{p,a,b}(u),
    \end{align*}
    replacing $u$ by $\Phi_\lambda u$ we get
    \begin{align*}
        \left(\frac{\lambda^{b-c+1}\inf_{v\in \mathcal{M}_{p,a,b}}\|u-v\|_{H^p_b}}{\|u\|_{L^p_c}}\right)^\alpha \le C \delta_{p,a,b}(\Phi_\lambda u) = C \delta_{p,a,b}(u),\quad \forall\,\lambda>0.
    \end{align*}
    Since $b+1\neq a\implies b-c+1\neq 0$, taking $\lambda\to \infty$ or $0$, we get a contradiction. Similarly, if
    \[ \left( \frac{\inf_{v\in \mathcal{M}_{p,a,b}}\|u-v\|_{L^p_a} }{\|u\|_{L^p_c}} \right)^\alpha \le C \delta_{p,a,b}(u),\]
    then
    \begin{align*}
        \left(\frac{\lambda^{a-c}\inf_{v\in \mathcal{M}_{p,a,b}}\|u-v\|_{L^p_a}}{\|u\|_{L^p_c}}\right)^\alpha \le C \delta_{p,a,b}(\Phi_\lambda u) = C \delta_{p,a,b}(u),\quad \forall\,\lambda>0,
    \end{align*}
    we also get a contradiction by letting $\lambda \to \infty$ or $0$.
\end{proof}
These properties of transformation indicate that, to establish the stability results of the strong-forms, it is only possible to strengthen the deficit $\delta_{p,a,b}(u)$ into $\tilde{\delta}_{p,a,b}(u)$, or reduce the sum of the $H^p_b$- and $L^p_a$-norms to their weighted product.

\subsection{The case $p=2$}

In this case, we are surprised to find that the minimizer of $\inf_{w\in \mathcal{M}_{2,a,b}}\|u-w\|_{L^2_c}$ satisfies some orthogonality conditions, and can help us derive the strong-form inequality directly.
\begin{proof}[Proof of Theorem \ref{plus version}: the case $p=2$]
    Write $S=S_{2,a,b}$ and $\tilde{\delta}_{2,a,b}(u)=\tilde{\delta}(u)$. Thanks to Proposition \ref{min}, there exists $0<\delta_0<1/2$ such that if $\tilde{\delta}(u)<2\delta_0$, then $\delta_{2,a,b}(u) < \delta_0$, and so
    \[ \inf_{w\in\mathcal{M}_{2,a,b}} \|u-w\|_{L_c^2} \]
    is attainable. If $\tilde{\delta}(u) \ge 2\delta_0$, then
    \begin{align*}
        \inf_{v\in \mathcal{M}_{2,a,b}}\frac{ \|u-v\|_{H_b^2}^2+\|u-v\|_{L^2_a}^2}{\|u\|_{H^2_b}^2+\|u\|_{L^2_a}^2} \le 1 \le \epsilon_1^{-1}\tilde{\delta}(u),
    \end{align*}
    and the statement holds.

    Next, we assume $\tilde{\delta}(u)<2\delta_0$. We can choose $v\in\mathcal{M}_{2,a,b}$ such that it attains $\inf_{w\in\mathcal{M}_{2,a,b}} \|u-w\|_{L_c^2}$. Assume $v(x) = v(k_0,\lambda_0)$, then by the minimality, we know $k_0\neq 0$ and
    \begin{align*}
        & \partial_{k}\big|_{k=k_0}\|u-v(k,\lambda)\|_{L^2_c} = 0,  \\
        & \partial_{\lambda}\big|_{\lambda=\lambda_0}\|u-v(k,\lambda)\|_{L^2_c} = 0.
    \end{align*}
    Since for $k\neq 0,\,\lambda>0$,
    \[  \partial_k v(k,\lambda) = \frac{v}{k},\quad \partial_\lambda v(k,\lambda) = \frac{\alpha}{\lambda}v-\frac{|x|^{b+1-a}}{b+1-a}v,\]
    we get
    \begin{align*}
        \int |x|^{-2c}(u-v)\frac{v}{k_0} = 0,\quad \int |x|^{-2c}(u-v)v\left(\frac{\alpha}{\lambda_0} - \frac{|x|^{b+1-a}}{b+1-a}\right) =0.
    \end{align*}
    Hence, the following orthogonality conditions hold:
    \begin{align}
        & \int |x|^{-(a+b+1)}(u-v)v\,dx = 0 \implies \|v\|_{L^2_c}^2 = \int |x|^{-(a+b+1)}uv\,dx , \label{or1}\\
        & \int |x|^{-2a} (u-v)v\,dx =0 \implies \|v\|_{L^2_a}^2 = \int |x|^{-2a}uv\,dx. \label{or2}
    \end{align}
    Since $v$ satisfies equation \eqref{eq}, testing $v$ by $u$, we obtain
    \begin{align*}
        & \int |x|^{-2b}\nabla v\cdot\nabla u + \lambda_0^2 \int |x|^{-2a}vu - 2S\lambda_0 \int |x|^{-(a+b+1)}vu = 0 \\
        \implies{}& \int |x|^{-2b}\nabla u\cdot\nabla v = -\lambda_0^2\|v\|_{L^2_a}^2 + 2S\lambda_0\|v\|_{L^2_c}^2.
    \end{align*}
    By \eqref{norm of v}, when $p=2$, $\|v\|_{H^2_b} = \lambda_0\|v\|_{L^2_a} = \sqrt{\lambda_0 S}\|v\|_{L^2_c}$, we get the third orthogonality condition:
    \begin{equation}\label{or3}
        \begin{aligned}
            & \int |x|^{-2b}\nabla u\cdot \nabla v \,dx = \|v\|_{H^2_b}^2 \\
        \implies{}& \int |x|^{-2b} (\nabla u-\nabla v)\cdot\nabla v\,dx=0.
        \end{aligned}
    \end{equation}
    By \eqref{or1}, \eqref{or2}, \eqref{or3}, we directly get
    \[ \|u-v\|^2 = \|u\|^2 - \|v\|^2 \]
    for the $H^2_b$-, $L^2_a$-,$L^2_c$-norms. And so
    \begin{equation}\label{p=2 middle}
        \begin{aligned}
            \|u-v\|_{H_b^2}^2 + \|u-v\|_{L^2_a}^2 ={}& (\|u\|_{H_b^2}^2-\|v\|_{H_b^2}^2) + (\|u\|_{L^2_a}^2-\|v\|_{L^2_a}^2) \\
            ={}& \|u\|_{H_b^2}^2 + \|u\|_{L_a^2}^2 - 2S^2 \|u\|_{L_c^2}^2 + 2S^2\|u-v\|_{L^2_c}^2 \\
        & - \big( \|v\|_{H_b^2}^2+\|v\|_{L_a^2}^2 - 2S^2\|v\|_{L^2_c}^2 \big)\\
            ={}& \tilde{\delta}(u)\|u\|_{L^2_c}^2 + 2S^2\|u-v\|_{L^2_c}^2 - \tilde{\delta}(v)\|v\|_{L^2_c}^2\\
        \le{}& \tilde{\delta}(u)\|u\|_{L^2_c}^2 + 2S^2\|u-v\|_  {L^2_c}^2.
        \end{aligned}
    \end{equation}
    By \eqref{weak stability}, the minimizer $v$ satisfies
    \[ \|u-v\|_{L^2_c}^2 \le C^{-1}(N,2,a,b)\|u\|_{L^2_c}^2\delta_{2,a,b}(u).\]
    Divided by $\|u\|_{L^2_c}^2$ on both sides of \eqref{p=2 middle}, we obtain
    \begin{align*}
        \frac{\|u-v\|_{H^2_b}^2+\|u-v\|_{L^2_a}^2}{\|u\|_{L^2_c}^2} \le{}& \tilde{\delta}(u) + C^{-1}(N,2,a,b)\delta_{2,a,b}(u) \lesssim \tilde{\delta}(u).
    \end{align*}
    Hence,
    \begin{align*}
        \frac{\|u-v\|_{H^2_b}^2+\|u-v\|_{L^2_a}^2}{\|u\|_{H^2_b}^2+\|u\|_{L^2_a}^2} \lesssim{}& \tilde{\delta}(u)\frac{\|u\|_{L^2_c}^2}{\|u\|_{H^2_b}^2+\|u\|_{L^2_a}^2} \lesssim \tilde{\delta}(u),
    \end{align*}
    and we prove \eqref{sum 2}.
\end{proof}
\begin{proof}[Proof of Theorem \ref{main p=2}]
    For $u\in H^2_b \cap L^2_a$, set $\lambda = \frac{\|u\|_{H^2_b}}{\|u\|_{L^2_a}}$ and let $\tilde{u}(x) = u(\lambda^{-\frac{1}{b+1-a}} x)$, then $\|\tilde{u}\|_{H^2_b} = \|\tilde{u}\|_{L^2_a}$, and $\tilde{\delta}_{2,a,b}(\tilde{u}) = 2\delta_{2,a,b}(u)$. For any $v\in \mathcal{M}_{2,a,b}$,
    \begin{align*}
            \frac{\|\tilde{u}-v\|_{H_b^2}^2 + \|\tilde{u}-v\|_{L^2_a}^2}{\|\tilde{u}\|_{H^2_b}^2 + \|\tilde{u}\|_{L^2_a}^2} ={}& \frac{\lambda^{-1}\|u-\tilde{v}\|_{H^2_b}^2 +\lambda \|u-\tilde{v}\|_{L^2_a}^2}{2\|\tilde{u}\|_{H^2_b}\|\tilde{u}\|_{L^2_a}} \\
            \ge{}& \frac{\|u-\tilde{v}\|_{H^2_b}\|u-\tilde{v}\|_{L^2_a}}{\|u\|_{H_b^2}\|u\|_{L^2_a}},
    \end{align*}
    where $\tilde{v}(x) = v(\lambda^{-\frac{1}{b+1-a}}x)$. By the symmetry of $\mathcal{M}_{p,a,b}$, we know $\tilde{v}\in\mathcal{M}_{2,a,b}$ and hence, applying \eqref{sum 2} to $\tilde{u}$ yields
    \[ \inf_{v\in\mathcal{M}_{2,a,b}} \frac{\|u-v\|_{H^2_b}\|u-v\|_{L^2_a}}{\|u\|_{H^2_b}\|u\|_{L^2_a}} \lesssim \tilde{\delta}_{2,a,b}(\tilde{u}) \lesssim \delta_{2,a,b}(u). \qedhere\]
\end{proof}
\begin{remark}\label{rmk 3.1}
    Theorem \ref{main p=2} is sharp, i.e.,
    \[ \inf\limits_{v\in \mathcal{M}_{2,a,b}}\frac{\|u-v\|_{H^2_b}\|u-v\|_{L^2_a}}{\|u\|_{H^2_b}\|u\|_{L^2_a}} \lesssim \delta_{2,a,b}(u)^t \]
    fails if $t>1$. In fact, choose $u(x) = v(1,1)(Ax)$ with $A=\diag(1,\dots,1,1+1/j)$, and then
    \[ \delta_{2,a,b}(u),\,\inf\limits_{v\in \mathcal{M}_{2,a,b}}\frac{\|u-v\|_{H^2_b}\|u-v\|_{L^2_a}}{\|u\|_{H^2_b}\|u\|_{L^2_a}}\sim j^{-2}. \]
\end{remark}

\subsection{The case $p>2$}
We first give some properties to simplify our problem.

Set $u(x) = u(r\theta) = \tilde{u}(r^l\theta)$ (this transform was used in \cite{Toshio1997, Lam2017}), where $r=|x|,\, \theta = \frac{x}{|x|}$ and $l=\frac{N-p-pb}{N-p}\in (0,1)$. Moreover, let $u_1(x) = \tilde{u}(l^{\frac{p-1}{p}}x)$, then
\begin{align*}
    \|u\|_{H^p_b}^p = \int |x|^{-pb}|\nabla u|^p\,dx ={}& \int_{\mathbb{S}^{N-1}}\int_0^\infty (|\nabla_ru|^2+r^{-2}|\nabla_\theta u|^2)^{p/2}r^{N-1-bp}\,drd\theta \\
    ={}& l^{p-1}\int_{\mathbb{S}^{N-1}}\int_0^\infty (|\nabla_r\tilde{u}|^2+l^{-2}r^{-2}|\nabla_\theta \tilde{u}|^2)^{p/2}\,drd\theta \\
    \ge{}& l^{p-1}\int_{\mathbb{S}^{N-1}}\int_0^\infty (|\nabla_r\tilde{u}|^2+r^{-2}|\nabla_\theta \tilde{u}|^2)^{p/2}\,drd\theta\\
    ={}& l^{p-1}\|\tilde{u}\|_{H_0^p}^p = l^{\frac{N(p-1)}{p}}\|u_1\|_{H^p_0}^p,
\end{align*}
where $\nabla_\theta$ is the gradient on the sphere $\mathbb{S}^{N-1}$. Also,
\begin{align*}
    \|u\|_{L^p_a}^p =\int |x|^{-pa}|u|^p\,dx ={}& \int_{\mathbb{S}^{N-1}}\int_0^\infty r^{-pa+N-1}|u|^p\,drd\theta\\
    ={}& l^{-1}\int_{\mathbb{S}^{N-1}}\int_0^\infty r^{\frac{p}{p-1}+N-1}|\tilde{u}|^p\,drd\theta\\
    ={}& l^{-1}\|\tilde{u}\|_{L^p_{-1/(p-1)}}^p = l^{\frac{N(p-1)}{p}}\|u_1\|_{L^p_{-1/(p-1)}}^p,\\
    \|u\|_{L^p_c}^p =\int |x|^{-pc}|u|^p\,dx ={}& \int_{\mathbb{S}^{N-1}}\int_0^\infty r^{-pc+N-1}|u|^p\,drd\theta\\
    ={}& l^{-1}\int_{\mathbb{S}^{N-1}}\int_0^\infty r^{\frac{p}{p-1}+N-1}|\tilde{u}|^p\,drd\theta\\
    ={}& l^{-1}\|\tilde{u}\|_{L^p_0}^p = l^{\frac{N(p-1)}{p}-1}\|u_1\|_{L^p_0}^p.
\end{align*}
Note that
\begin{align*}
    S_{p,a,b} ={}& \frac{N-1-(p-1)a-b}{p}\\ ={}& \frac{N-\frac{Npb}{N-p}}{p}\\
    ={}& \frac{N}{p}\cdot \frac{N-p-pb}{N-p} = lS(p,-\frac{1}{p-1},0),
\end{align*}
we get
\begin{equation}
    \begin{aligned}
        & \delta_{p,a,b}(u) \ge l\delta_{p,-\frac{1}{p-1},0}(u_1), \\
        & \tilde{\delta}_{p,a,b}(u) \ge l \tilde{\delta}_{p,-\frac{1}{p-1},0}(u_1).
    \end{aligned}
\end{equation}
Note that, under this transform $u\mapsto u_1$, the set $\mathcal{M}_{p,a,b}$ becomes $\mathcal{M}_{p,-\frac{1}{p-1},0}$, so it is enough to show Theorem \ref{plus version} and \ref{main} with $(a,b,c) = (-\frac{1}{p-1},0,0)$. In the following we will omit $a,b$ if there are no confusions, and write
\begin{equation*}
    \begin{aligned}
        &  H^p=H^p_0, \quad L^p=L^p_0, \quad S=S_{p,-\frac{1}{p-1},0} = \frac{N}{p},\\
        & \delta(u) = \delta_{p,-\frac{1}{p-1},0}(u), \quad \tilde{\delta}(u) =  \tilde{\delta}_{p,-\frac 1{p-1},0}(u),\quad \mathcal{M} = \mathcal{M}_{p,-\frac{1}{p-1},0}
    \end{aligned}
\end{equation*}
for convenience.

To prove Theorem \ref{plus version} and \ref{main}, we will use the following sharp inequalities on vectors proved in \cite{Figalli2022}.

\begin{lemma}[{\cite[Lemma 2.1 and 2.4]{Figalli2022}}]\label{primary}
    Let $y,z\in \R^n$ and $a,b\in \R$. Then, for any $\kappa>0$, there exist constants $c_0=c_0(p,\kappa)>0$ and $c_1=c_1(p,\kappa)>0$ such that the following statements hold:
    \begin{enumerate}[(i)]
        \item For $p>2$, \begin{align*}
            |y+z|^p \ge{}& |y|^p + p|y|^{p-2}y\cdot z \\
            & + \frac{1-\kappa}{2}\Big(p|y|^{p-2}|z|^2 + p(p-2)|w|^{p-2}\big(|y|-|y+z|\big)^2\Big)+c_0|z|^p,
        \end{align*}
        where
        \[ w=w(y,y+z) = \begin{cases}
            x, & \text{if}\ |y|\le |y+z|,\\
            (\frac{|y+z|}{|y|})^{\frac{1}{p-2}}(y+z), &\text{if}\ |y+z|\le |y|.
        \end{cases}\]
        \item For $p>2$, \begin{align*}
            |a+b|^p \le{}& |a|^p + p|a|^{p-2}ab + \left(\frac{p(p-1)}{2}+\kappa\right)|a|^{p-2}|b|^2 + c_1|b|^p.
        \end{align*}
    \end{enumerate}
\end{lemma}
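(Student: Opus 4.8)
The plan is to exploit that both estimates are positively $p$-homogeneous in their arguments, reduce each to a calculus problem in one or two real variables, and settle that by a second-order Taylor expansion near the origin together with a compactness-plus-asymptotics argument far from it. \emph{For part (ii):} assuming $a\neq0$, divide by $|a|^p$ and set $t=b/a$, so the claim becomes $|1+t|^p\le 1+pt+\bigl(\tfrac{p(p-1)}{2}+\kappa\bigr)t^2+c_1|t|^p$ for all $t\in\R$. The function $f(t):=|1+t|^p$ is $C^2$ near $0$ with $f(0)=1$, $f'(0)=p$, $f''(0)=p(p-1)$, so Taylor's theorem with a uniform remainder bound gives $\delta=\delta(p,\kappa)>0$ for which the inequality holds with $c_1=0$ on $\{|t|\le\delta\}$. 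For $|t|\ge\delta$ one uses the crude bound $|1+t|^p\le(1+|t|)^p\le C(p,\delta)|t|^p$, so a sufficiently large $c_1$ absorbs the remaining lower-order terms; the case $a=0$ is immediate provided $c_1\ge1$. Combining the two regimes fixes $c_1=c_1(p,\kappa)$.

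\emph{For part (i):} by homogeneity we may take $|y|=1$ (if $y=0$ then, reading the first branch of the definition of $w$ as $w=y$, one has $|w|=0$ and the claim reduces to $|z|^p\ge c_0|z|^p$, valid for $c_0\le1$), and by rotational invariance we may work in the plane spanned by $\hat y=y/|y|$ and $z$, writing $z=s\hat y+te$ with $e\perp\hat y$, so that $|y+z|^2=(1+s)^2+t^2$ and the whole estimate becomes a function of $(s,t)$. The heart of the matter near the origin is that the model quadratic form $\tfrac12\bigl(p|z|^2+p(p-2)|w|^{p-2}(|y|-|y+z|)^2\bigr)$ is engineered to reproduce the Hessian $p|y|^{p-2}\bigl(I+(p-2)\hat y\otimes\hat y\bigr)$ of $y\mapsto|y|^p$: indeed $|w|^{p-2}\to1$ and $(|y|-|y+z|)^2=s^2+O(|z|^3)$ as $z\to0$, so the model form is $\tfrac12\bigl(p(p-1)s^2+p\,t^2\bigr)+O(|z|^3)$, which coincides with the second-order Taylor polynomial of $(s,t)\mapsto((1+s)^2+t^2)^{p/2}$. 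Hence the defect $D(z):=|y+z|^p-|y|^p-p|y|^{p-2}y\cdot z-\tfrac12(\text{model form})$ is $O(|z|^3)$ near $z=0$, and since the factor $1-\kappa$ leaves a spare term $\tfrac{\kappa}{2}(\text{model form})$ of order $|z|^2$, one gets $D(z)+\tfrac{\kappa}{2}(\text{model form})-c_0|z|^p\ge0$ on $\{|z|\le\delta\}$ once $\delta$ is chosen small.

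It remains to handle $\{|z|\ge\delta\}$ with $|y|=1$. For $\delta\le|z|\le M$ I would invoke the sharp quantitative-convexity fact that $D(z)>0$ for every $z\neq0$ — equivalently, part (i) with $\kappa=0$ and $c_0=0$, equality only at $z=0$ — which on the compact annulus yields $D(z)\ge c(\delta,M)>0$; since the model form is nonnegative, choosing $c_0\le c(\delta,M)M^{-p}$ gives the claim there. For $|z|\ge M$ one uses $|y+z|^p\ge(|z|-1)^p\ge\tfrac12|z|^p$ for $M$ large, while the linear and quadratic contributions are $O(|z|^2)$, so the defect dominates $c_0|z|^p$ for small $c_0$; taking $c_0$ to be the least of the three regime constants completes the proof. \emph{The main obstacle is this middle step}, namely proving $D(z)\ge0$ with equality only at $z=0$ — the $\kappa=c_0=0$ form of (i). This is an elementary but genuinely sharp refinement of the convexity of $|\cdot|^p$; I would attack it again through the $(s,t)$-reduction, fixing one variable and minimizing in the other to cut it down to a one-variable inequality, using strict convexity of $\tau\mapsto|\tau|^p$ for the radial variable and carefully tracking the weight $|w|^{p-2}$, which switches branch across the set $|y|=|y+z|$ but is continuous there (equal to $|y|^{p-2}$ when $|y|\le|y+z|$).
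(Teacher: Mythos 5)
You should first note that the paper itself offers no proof of this lemma: it is imported verbatim from Figalli--Zhang \cite{Figalli2022} (their Lemmas 2.1 and 2.4), so your argument has to stand entirely on its own. Part (ii) does stand: the homogeneity reduction to $|1+t|^p\le 1+pt+\bigl(\tfrac{p(p-1)}{2}+\kappa\bigr)t^2+c_1|t|^p$, the Taylor estimate on $\{|t|\le\delta(p,\kappa)\}$ (using continuity of $t\mapsto p(p-1)|1+t|^{p-2}$ at $0$), and the absorption of all lower-order terms into $c_1|t|^p$ on $\{|t|\ge\delta\}$ give a complete, standard proof. Your reading of the misprinted first branch of $w$ as $w=y$ is also the correct one (with $w=y+z$ the inequality would already fail at $y=0$ for $p\ge 3$).

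Part (i), however, has a genuine gap exactly at the step you yourself flag as the main obstacle, and the auxiliary fact you propose to use there is false as stated. Set
\[
D(z):=|y+z|^p-|y|^p-p|y|^{p-2}\,y\cdot z-\tfrac12\Bigl(p|y|^{p-2}|z|^2+p(p-2)|w|^{p-2}\bigl(|y|-|y+z|\bigr)^2\Bigr).
\]
You claim $D(z)>0$ for all $z\ne0$ and extract a uniform bound $D\ge c(\delta,M)>0$ on $\{\delta\le|z|\le M\}$ by compactness. But for $y\ne 0$, $D$ vanishes on the entire sphere $\{z:|y+z|=|y|\}$: there $2y\cdot z+|z|^2=|y+z|^2-|y|^2=0$, so the linear plus first quadratic term cancel, and $(|y|-|y+z|)^2=0$, hence $D(z)=0$ (e.g.\ $z=-2y$). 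Indeed, after normalizing $|y|=1$ and using $2y\cdot z+|z|^2=r^2-1$ with $r=|y+z|$, one finds $D=r^p-1-\tfrac p2(r^2-1)-\tfrac{p(p-2)}{2}\min\{1,r^{p-1}\}(r-1)^2$, a function of $r$ alone that vanishes identically at $r=1$. So no positive lower bound on the annulus exists and your middle-region argument collapses; this computation also shows that the ``$\kappa=0$, $c_0>0$'' version of (i) is false, i.e.\ the $\kappa$-loss is genuinely needed on that sphere, not just a convenience. The natural repair is to keep, on the annulus, the slack you discarded, $\tfrac{\kappa}{2}\,p|y|^{p-2}|z|^2\ge\tfrac{\kappa p}{2}\delta^2$, and then only nonnegativity $D\ge0$ is required there; but $D\ge0$ --- equivalently the one-variable inequality in $r$ displayed above, which is elementary for $r\ge1$ (three differentiations, $g(1)=g'(1)=g''(1)=0$, $g'''>0$) but requires a separate, genuinely sharp argument on $0\le r\le1$ where $|w|^{p-2}=r^{p-1}$ --- is precisely the crux you left unproven. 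As written, the proposal therefore does not establish part (i).
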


Thanks to these primary inequalities, the proof for $p=2$ is valid as long as we can find some minimizer $v$ satisfying the following orthogonality properties:
\begin{equation}\label{con1}
    \begin{aligned}
        & \int |\nabla v|^{p-2}\nabla v\cdot (\nabla u-\nabla v)=0,\\
        & \int |x|^{\frac{p}{p-1}}|v|^{p-2}v(u-v) =0,\\
        & \int |v|^{p-2}v(u-v)=0.
    \end{aligned}
\end{equation}
We hope $v\in\mathcal{M}$ attaining $\inf_{w\in \mathcal{M}} \|u-w\|_{L^p}$ can give us the desired result, but such minimizer only satisfies
\begin{align*}
    \int |u-v|^{p-2}uv = 0 = \int |x|^{\frac{p}{p-1}}|u-v|^{p-2}uv,
\end{align*}
and it is difficult to show it satisfies \eqref{con1}. To settle it, we pick another element of $\mathcal{M}$.

\begin{definition}
    For $u\in H^p\cap L^p_{-1/(p-1)}$, set
    \begin{equation}\label{def of Pu}
        P_u = \left\{ v\in\mathcal{M} : \|v\|_{L^p}=1,\, \int |v|^{p-2}uv = \sup_{ \stackrel{w\in \mathcal{M}}{\|w\|_{L^p}=1} } \int |w|^{p-2}uw\right\}.
    \end{equation}
\end{definition}

\begin{proposition}\label{inf and max prop}
    There exists $\delta_1>0$ such that for any $u\in H^p\cap L^p_{-1/(p-1)}$ with $\delta(u)<\delta_1$, the following  supremum is attainable:
    \begin{equation}\label{inf and sup}
        \sup_{ \stackrel{w\in \mathcal{M}}{\|w\|_{L^p}=1} } \int |w|^{p-2}uw.
    \end{equation}
\end{proposition}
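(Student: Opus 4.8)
The plan is to show that the supremum in \eqref{inf and sup} is attained by running the same compactness argument that proved Proposition \ref{min}, but now for the functional $w\mapsto\int|w|^{p-2}uw$ restricted to $\{w\in\mathcal M:\|w\|_{L^p}=1\}$. Parametrize this constraint set: every $w$ with $\|w\|_{L^p}=1$ is of the form $w=v(e^{i\vartheta},\lambda)$ with $\vartheta\in\{0,\pi\}$ (i.e. $w=\pm C_1\lambda^\alpha\exp(-\lambda|x|^{b+1-a}/(b+1-a))$, here with $(a,b)=(-1/(p-1),0)$), so the only free parameter is $\lambda>0$ together with the sign. Replacing $w$ by $-w$ only flips the sign of the integral, so it suffices to maximize $F(\lambda):=\int|v_\lambda|^{p-2}uv_\lambda$ over $\lambda>0$, where $v_\lambda:=C_1\lambda^\alpha\exp(-\lambda|x|^{b+1-a}/(b+1-a))$. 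Since $F$ is continuous in $\lambda$ on $(0,\infty)$, the only obstruction to attainment is escape of a maximizing sequence $\lambda_i$ to $0$ or $\infty$, so the crux is to rule this out using the smallness of $\delta(u)$.

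First I would record that $|F(\lambda)|\le\|u\|_{L^p_c}\||v_\lambda|^{p-1}\|_{L^{p'}_{-c}}=\|u\|_{L^p_c}\|v_\lambda\|_{L^p_c}^{p-1}=\|u\|_{L^p_c}$ by Hölder (with the weight $|x|^{-pc}$ split as $|x|^{-c}\cdot|x|^{-(p-1)c}$) together with the normalization $\|v_\lambda\|_{L^p_c}=1$ noted in Section 2; this bounds the supremum by $\|u\|_{L^p_c}$, and equality would force $u$ proportional to $v_\lambda$, i.e. $u\in\mathcal M$. Next, the key lower bound: by the weak stability estimate \eqref{weak stability}, if $v$ is the (near-)minimizer of $\|u-w\|_{L^p_c}$ from Proposition \ref{min}, normalized to $\|v\|_{L^p_c}=1$, then $\|u-\|u\|_{L^p_c}v\|_{L^p_c}\lesssim\delta(u)^{1/p}\|u\|_{L^p_c}$, and expanding the inner product shows $\int|v|^{p-2}uv\ge(1-C\delta(u)^{1/p})\|u\|_{L^p_c}$ — more precisely one uses $\|u\|_{L^p_c}^p-p\int|v|^{p-2}u\cdot(\|u\|_{L^p_c}v)+\dots$ or simply the convexity inequality $|a|^p\ge|b|^p+p|b|^{p-2}b(a-b)$ applied pointwise with $a=u,\ b=\|u\|_{L^p_c}v$ and integrated against $|x|^{-pc}$, which gives $\int|v|^{p-2}uv\,|x|^{-pc}\ge\|u\|_{L^p_c}+\frac1p\big(\|u\|_{L^p_c}^{p-1}-\|v\|_{L^p_c}^{p-1}\|u-\cdots\|\big)$; in any case one obtains that the supremum is at least $(1-C\delta(u)^{1/p})\|u\|_{L^p_c}$, hence $\ge\frac12\|u\|_{L^p_c}$ when $\delta(u)<\delta_1$ is small. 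Then for any maximizing sequence $v_{\lambda_i}$ one has $F(\lambda_i)\ge\frac13\|u\|_{L^p_c}>0$ for large $i$.

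Now suppose $\lambda_i\to0$ or $\lambda_i\to\infty$. Exactly as in the proof of Proposition \ref{min}, one shows $|x|^{-c}v_{\lambda_i}(x)\to0$ pointwise on $\R^N\setminus\{0\}$ (using $e^{-t}\le m!\,t^{-m}$ for the case $\lambda_i\to\infty$ with an integer $m>\alpha$), and one has the uniform-in-$i$ domination $|x|^{-c}v_{\lambda_i}(x)\lesssim|x|^{-c}\chi_{\{|x|\le1\}}+|x|^{-m(b+1-a)-c}\chi_{\{|x|\ge1\}}\in L^p$ on any annulus (the bounds depend only on $\sup\lambda_i$ or $\inf\lambda_i$). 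Since $|v_{\lambda_i}|^{p-2}v_{\lambda_i}=v_{\lambda_i}^{p-1}$ (these functions are positive) and $|x|^{-pc}=|x|^{-c}\cdot|x|^{-(p-1)c}$, Hölder on the annulus $\{\epsilon\le|x|\le A\}$ gives $\int_{\epsilon\le|x|\le A}|x|^{-pc}|v_{\lambda_i}|^{p-2}uv_{\lambda_i}\le\|u\|_{L^p_c}\big(\int_{\epsilon\le|x|\le A}|x|^{-pc}v_{\lambda_i}^p\big)^{(p-1)/p}\to0$ by dominated convergence, while the complementary piece $\int_{\{|x|<\epsilon\}\cup\{|x|>A\}}|x|^{-pc}v_{\lambda_i}^{p-2}uv_{\lambda_i}$ is bounded by $\|u\|_{L^p_c(\{|x|<\epsilon\}\cup\{|x|>A\})}\cdot1$, which is $<\frac16\|u\|_{L^p_c}$ once $\epsilon$ is small and $A$ large since $u\in L^p_c$. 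Hence $\limsup_i F(\lambda_i)\le\frac16\|u\|_{L^p_c}<\frac13\|u\|_{L^p_c}$, contradicting $F(\lambda_i)\ge\frac13\|u\|_{L^p_c}$. Therefore $\{\lambda_i\}$ and $\{1/\lambda_i\}$ are bounded, we extract $\lambda_i\to\lambda_0\in(0,\infty)$, and by the uniform domination $|x|^{-c}v_{\lambda_i}\to|x|^{-c}v_{\lambda_0}$ in $L^p$ (so $v_{\lambda_i}^{p-1}\to v_{\lambda_0}^{p-1}$ in $L^{p'}_{-c}$), giving $F(\lambda_i)\to F(\lambda_0)$; thus $v_{\lambda_0}$ (or $-v_{\lambda_0}$) attains the supremum, proving the proposition. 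The main obstacle is precisely this exclusion of concentration/vanishing — it is the same mechanism as in Proposition \ref{min}, and the delicate point is organizing the Hölder splitting of the weight so that the uniform $L^p$ dominations established there transfer to the nonlinear quantity $|v_\lambda|^{p-2}v_\lambda$.
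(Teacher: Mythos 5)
Your proposal is correct, but it reaches attainability by a slightly different route than the paper. The paper picks a maximizing sequence $v_i=v(k_i,\lambda_i)$ with $\|v_i\|_{L^p}=1$, notes that $\||v_i|^{p-2}v_i\|_{L^{p'}}=1$, extracts a weak $L^{p'}$ limit $f$, and shows via the same pointwise decay estimates as in Proposition \ref{min} that $f\equiv 0$ whenever $\lambda_i\to 0$ or $\lambda_i\to\infty$; this contradicts $\int fu=\mu>0$, while the degenerate case $\mu=0$ is attained trivially by choosing the sign of $v(\pm 1,\lambda)$. In particular the paper never needs a quantitative lower bound on $\mu$, so the hypothesis $\delta(u)<\delta_1$ plays no essential role there. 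You instead make the smallness of $\delta(u)$ do real work: using the weak stability \eqref{weak stability} together with a normalized near-minimizer of $\inf_{w\in\mathcal M}\|u-w\|_{L^p}$ you get $\sup\ge(1-C\delta(u)^{1/p})\|u\|_{L^p}\ge\tfrac12\|u\|_{L^p}$, and then you exclude escape of $\lambda_i$ by the annulus splitting (H\"older on the complement, dominated convergence on $\{\epsilon\le|x|\le A\}$), exactly mirroring Proposition \ref{min} and avoiding weak compactness altogether. Both arguments are sound; yours is more elementary and quantitative, the paper's is shorter and works without invoking \eqref{weak stability}. Two cosmetic points: the parenthetical ``convexity inequality'' display in your lower-bound step is garbled and unnecessary --- the clean H\"older expansion $\int|v|^{p-2}vu=\|u\|_{L^p}+\int|v|^{p-2}v\,(u-\|u\|_{L^p}v)\ge\|u\|_{L^p}-\|u-\|u\|_{L^p}v\|_{L^p}$ is all you need; and since the supremum over the constraint set is $\sup_\lambda|F(\lambda)|$, you should keep the sign $k_i\in\{\pm1\}$ in the maximizing sequence (your escape estimate bounds $|F(\lambda_i)|$, so the contradiction goes through unchanged, as you indicate at the end).
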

\begin{proof}
    Pick a minimizing sequence $\{v_i = v(k_i,\lambda_i)\}_{i=1}^\infty$, where $k_i\in \{1,-1\}$ and $\lambda_i>0$. Let
    \[ \mu \coloneqq \sup_{ \stackrel{w\in \mathcal{M}}{\|w\|_{L^p}=1} } \int |w|^{p-2}uw.\]
    If $\mu=0$, since at least one of $v(1,\lambda)$, $v(-1,\lambda)$ makes $\int |v|^{p-1}vu \ge 0$, it is easily to attain it. Next we assume $\mu>0$. Since $\||v_i|^{p-1}v_i\|_{L^{p'}}^{p'} = \|v_i\|_{L^p}^p = 1$, there exists $f\in L^{p'}$ s.t. $|v_i|^{p-2}v_i \rightharpoonup f$ weakly in $L^{p'}$. We recall the estimates of $|v_i(x)|$ in Proposition \ref{min}:
    \begin{equation*}
        |v_i(x)| \lesssim \begin{cases}
            K\left(\sup \lambda_i\right)^\alpha, & \text{If $\lambda_i\to 0$},\\
            K|x|^{-m(b+1-a)}\left(\inf \lambda_i\right)^{-m+\alpha}, & \text{If $\lambda_i\to \infty$}.
        \end{cases}
    \end{equation*}
    Hence, if $\lambda_i\to 0$ or $\lambda_i\to +\infty$, then for any $0<\epsilon<A<+\infty$ and any $\varphi \in C_0^\infty(\{\epsilon\le |x|\le A\})$,
    \begin{equation*}
        \int f\varphi = \lim_{i\to \infty} \int |v_i|^{p-2}v_i\varphi = 0 \implies f = 0 \,\,\text{for a.e. $\epsilon\le |x|\le A$},
    \end{equation*}
    yielding that $f=0$ a.e. and thus,
    \begin{equation*}
        0 = \int fu = \lim_{i\to \infty} \int |v_i|^{p-2}v_i u = \mu >0,
    \end{equation*}
    which is a contradiction. Thus, we can assume $k_i\equiv 1$ and $\lambda_i\to \lambda_0\in \R_+$, and so $v_i \to v_0\coloneqq v(1,\lambda_0)$ pointwise. We must have $f=|v_0|^{p-2}v_0$ a.e. and then
    \begin{equation*}
        \int |v_0|^{p-2}v_0u=\lim_{i\to \infty} \int |v_i|^{p-2}v_iu = \sup_{ \stackrel{w\in \mathcal{M}}{\|w\|_{L^p}=1} } \int |w|^{p-2}uw
    \end{equation*}
    as we desire.
\end{proof}
\begin{remark}
    Fix $p\ge 2$, for general $(a,b)$ satisfying the conditions in Theorem \ref{main}, if $\delta_{p,a,b}(u) < \frac{N-p-pb}{N-p}\delta_0$, using the transform $u(x) = \tilde{u}(r^l\theta)$ with $l=\frac{N-p-pb}{N-p}$, we know
    \[ \delta(\tilde{u}) \le \frac{N-p}{N-p-pb}\delta_{p,a,b}(u) <\delta_0.\]
    Thus,
    \[ \sup_{ \stackrel{w\in \mathcal{M}}{\|w\|_{L^p}=1} } \int |w|^{p-2}\tilde{u}w \]
    is attainable. As a result,
    \[ \sup_{ \stackrel{w\in \mathcal{M}_{p,a,b}}{\|w\|_{L^p_c}=1} } \int |x|^{-pc}|w|^{p-2}uw \]
    is attainable. So the statement of Proposition \ref{inf and max prop} holds for general $(a,b)$.
\end{remark}

\begin{proposition}\label{pro}
    There exist $\delta_2\le \min\{\delta_0,\delta_1\}$ and $C_2>0$ depending only on $N,p,a,b$ s.t. for any $u\in H^p\cap L^p_{-1/(p-1)}$ with $\delta(u) < \delta_2$, we have
    \begin{equation}
        \sup_{v\in P_u}\frac{\|u-\mu_v v\|_{L^p}}{\|u\|_{L^p}}\le C_2 \left(\inf_{w\in\mathcal{M}}\frac{\|u-w\|_{L^p}}{\|u\|_{L^p}}\right)^{1/2} \lesssim \delta(u)^{1/(2p)},
    \end{equation}
    where for $v\in P_u$, $\mu_v = \int |v|^{p-2}vu \ge 0$.
\end{proposition}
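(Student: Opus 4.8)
The plan is to relate the quantity $\sup_{v\in P_u}\|u-\mu_v v\|_{L^p}$ to the genuine distance $\inf_{w\in\mathcal M}\|u-w\|_{L^p}$, and then invoke the weak stability estimate \eqref{weak stability} (rewritten for $(a,b,c)=(-\tfrac1{p-1},0,0)$) to bound the latter by $\delta(u)^{1/p}$. Fix $u$ with $\delta(u)<\delta_2$ for $\delta_2\le\min\{\delta_0,\delta_1\}$ to be chosen small. By Proposition \ref{min}, the infimum $\inf_{w\in\mathcal M}\|u-w\|_{L^p}$ is attained by some $v_\ast=v(k_\ast,\lambda_\ast)$, and by \eqref{weak stability} we have $\|u-v_\ast\|_{L^p}\lesssim\delta(u)^{1/p}\|u\|_{L^p}$; in particular, after normalizing we may assume $\|u\|_{L^p}=1$, so $\|u-v_\ast\|_{L^p}$ is small. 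Write $\eta\coloneqq\|u-v_\ast\|_{L^p}$. Note $\|v_\ast\|_{L^p}\in[1-\eta,1+\eta]$, so $\tilde v_\ast\coloneqq v_\ast/\|v_\ast\|_{L^p}$ is a competitor in \eqref{inf and sup} with $\|u-\tilde v_\ast\|_{L^p}\le 2\eta$ (say, for $\eta$ small). Consequently, for any $v\in P_u$,
$$\mu_v=\int|v|^{p-2}vu \ge \int|\tilde v_\ast|^{p-2}\tilde v_\ast u \ge \|\tilde v_\ast\|_{L^p}^p - \int|\tilde v_\ast|^{p-2}\tilde v_\ast(\tilde v_\ast-u) \ge 1 - 2\eta,$$
using Hölder on the last integral; so $\mu_v$ is close to $1$.

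The core estimate is a uniform convexity inequality in $L^p$. For $p\ge 2$, the Clarkson-type bound gives a constant $c_p>0$ with $\big\||v|^{p-2}v-|w|^{p-2}w\big\|_{L^{p'}}\,\|v-w\|_{L^p}\gtrsim\|v-w\|_{L^p}^p$ whenever $\|v\|_{L^p},\|w\|_{L^p}\le 1$ — equivalently, from the strict monotonicity of the duality map one extracts
$$\int\big(|v|^{p-2}v-|w|^{p-2}w\big)(v-w)\,dx \;\gtrsim\; \|v-w\|_{L^p}^{\max\{2,p\}} = \|v-w\|_{L^p}^p.$$
Apply this with $v$ any element of $P_u$ and $w=u$: the left side is $\int|v|^{p-2}v(v-u) - \int|u|^{p-2}u(v-u) = (\|v\|_{L^p}^p-\mu_v) - \int|u|^{p-2}u(v-u)$. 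By definition $\|v\|_{L^p}=1$, and $\mu_v\le \sup_{\|w\|_{L^p}=1}\int|w|^{p-2}uw \le \|u\|_{L^p}^{p-1}=1$ by Hölder, so $\|v\|_{L^p}^p-\mu_v = 1-\mu_v \le 2\eta$. For the other term, the minimality/stationarity of $v_\ast$ as the $L^p$-projection onto $\mathcal M$ (the Euler-Lagrange conditions $\int|u-v_\ast|^{p-2}uv_\ast = 0 = \int|x|^{p/(p-1)}|u-v_\ast|^{p-2}uv_\ast$ noted in the text) combined with $\|u-v_\ast\|_{L^p}=\eta$ lets us bound $\big|\int|u|^{p-2}u(v-u)\big|$ — roughly, $\int|u|^{p-2}u(v-u) = \int|u|^{p-2}u(v-v_\ast) + \int|u|^{p-2}u(v_\ast-u)$, the second piece is $O(\eta)$ by Hölder since $|u|^{p-2}u\in L^{p'}$ with norm $1$, and the first is $O(\|v-v_\ast\|_{L^p})$, which we control below. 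Collecting, $\|v-u\|_{L^p}^p \lesssim \eta + \|v-v_\ast\|_{L^p}$, and since $\|v-v_\ast\|_{L^p}\le\|v-u\|_{L^p}+\eta$ this gives a self-improving inequality of the form $x^p\lesssim \eta + x$ for $x=\|v-u\|_{L^p}$; for $\eta$ small this forces $x\lesssim \eta^{1/2}$ when $p\ge 2$ (since the $x$ term dominates and $x^p\le x$ near $0$ only buys $x\lesssim x^{1/p}\cdot(\ldots)$ — more carefully: $x^p\lesssim\eta+x\le \eta + x$, and bootstrapping once with $x\le\eta^{1/p}$ gives $x\lesssim\eta^{1/2}$). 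Finally, $\|u-\mu_v v\|_{L^p}\le\|u-v\|_{L^p}+|1-\mu_v|\|v\|_{L^p}\lesssim\eta^{1/2}+\eta\lesssim\eta^{1/2}\lesssim\delta(u)^{1/(2p)}$, which is the claimed bound with $C_2$ depending only on $N,p,a,b$; and $\eta^{1/2}=\big(\inf_w\|u-w\|_{L^p}\big)^{1/2}$, giving the first inequality too.

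The main obstacle is making the chain "$\mu_v$ close to $1$ $\Longrightarrow$ $v$ close to $u$ in $L^p$" quantitative with the correct power, i.e. establishing the uniform-convexity inequality $\int(|v|^{p-2}v-|u|^{p-2}u)(v-u)\gtrsim\|v-u\|_{L^p}^p$ on the relevant ball and tracking that it yields the exponent $1/2$ rather than something worse. This is exactly where Lemma \ref{primary}(i) enters: integrating its vectorial inequality with $y=u$, $y+z=v$ (or vice versa) produces a pointwise lower bound whose integral contains a $c_0\|v-u\|_{L^p}^p$ term plus the quadratic "$|u|^{p-2}|v-u|^2$" and "Hessian" terms, all nonnegative; discarding the quadratic pieces leaves precisely the needed inequality. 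A secondary technical point is justifying that the stationarity conditions for $v_\ast$ hold (it is an interior minimizer of a smooth two-parameter family once $\delta(u)<\delta_0$, by Proposition \ref{min}), which is routine. One should also double-check the Hölder step $\mu_v\le 1$ and the harmless normalization $\|u\|_{L^p}=1$, both of which are immediate. Everything else is bookkeeping of constants, all depending only on $N,p,a,b$ through $c_0,c_1$ of Lemma \ref{primary} and the constant in \eqref{weak stability}.
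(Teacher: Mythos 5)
Your opening steps (attainment of the projection $v_\ast$, $\|u-v_\ast\|_{L^p}=\eta\lesssim\delta(u)^{1/p}$, and the deduction $1-\mu_v\le 2\eta$ from the maximality defining $P_u$) match the paper and are fine. The gap is in the core quantitative step. From the monotonicity of the duality map you arrive at $x^p\lesssim \eta + x$ with $x=\|u-v\|_{L^p}$, and you claim this forces $x\lesssim\eta^{1/2}$. It does not: the inequality $x^p\le C(\eta+x)$ is satisfied by $x\sim C^{1/(p-1)}$ even when $\eta=0$, so it gives no smallness at all; the "bootstrap with $x\le\eta^{1/p}$" is unjustified (that bound was never established), and even granting it, substituting yields $x\lesssim\eta^{1/p^2}$, not $\eta^{1/2}$. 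More fundamentally, any argument that compares $v\in P_u$ directly with $u$ through uniform convexity of $L^p$ or strict monotonicity of $t\mapsto|t|^{p-2}t$ can at best produce $\|u-v\|_{L^p}\lesssim\eta^{1/p}$, because the modulus of convexity of $L^p$ for $p>2$ is of power type $p$; for $p>2$ this is strictly weaker than the exponent $1/2$ claimed in the proposition, so the route cannot be repaired to give the stated estimate.

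The exponent $1/2$ in the paper does not come from convexity of the norm but from the explicit structure of $\mathcal M$. After showing $1-\mu\le 2\epsilon$ and $\int|v|^{p-2}v\,\frac{w}{\|w\|_{L^p}}\ge 1-4\epsilon$, the paper observes that both $v=v(1,\lambda)$ and $\frac{w}{\|w\|_{L^p}}=v(1,\rho)$ lie in the normalized one-parameter scaling family, computes the overlap explicitly as $\bigl(\tfrac{t^{1/p}}{1-1/p+t/p}\bigr)^{N(p-1)/p}$ with $t=\rho/\lambda$, and uses that $1-\tfrac{t^{1/p}}{1-1/p+t/p}\sim(1-t)^2$ near $t=1$ to conclude $|1-t|\lesssim\epsilon^{1/2}$; a direct estimate then gives $\bigl\|v-\tfrac{w}{\|w\|_{L^p}}\bigr\|_{L^p}=O(|1-t|)$, and the triangle inequality yields $\|u-\mu v\|_{L^p}\lesssim\epsilon^{1/2}$. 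The square root is exactly the loss caused by the quadratic degeneracy of the overlap in the scale parameter, a mechanism your proposal never uses; without it (or some substitute exploiting the explicit form of the minimizers), the claimed bound is out of reach. Incidentally, Lemma \ref{primary}(i) is used in the paper later, in the proof of Theorem \ref{plus version}, with the orthogonality conditions \eqref{orth-111}, not to prove this proposition.
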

\begin{proof}
    For $u\in H^p\cap L^p_{-1/(p-1)}$ with $\delta(u)<\min\{\delta_0,\delta_1\}$, we can pick $v\in P_u$, $\mu = \int |v|^{p-2}vu$ and $w$ attaining $\inf_{w\in \mathcal{M}}\|u-w\|_{L^p}$. Since $\epsilon \coloneqq \|u-w\|_{L^p} \lesssim \delta(u)^{1/p} \ll 1$, we have
    \begin{equation}\label{est 7.2}
        \big|\|w\|_{L^p} - 1\big| \le \|w-u\|_{L^p} = \epsilon.
    \end{equation}
    From the fact that $v\in P_{u}$, we get
    \begin{align*}
        \|w\|_{L^p}^{p-1}\mu \ge{}& \int |w|^{p-2}wu\\
        ={}& \int |w|^{p-2}w(u-w) + \|w\|_{L^p}^p \\
        \ge{}& \|w\|_{L^p}^p - \|w\|_{L^p}^{p-1}\|u-w\|_{L^p},
    \end{align*}
    and so
    \begin{equation}\label{est 7.3}
        1\ge \int |v|^{p-2}vu = \mu \ge \|w\|_{L^p} - \epsilon.
    \end{equation}
    By \eqref{est 7.2},
    \begin{equation}\label{est 7.4}
        0\le 1-\mu \le \epsilon + 1-\|w\|_{L^p} \le 2\epsilon.
    \end{equation}
    By \eqref{est 7.2}, \eqref{est 7.4}, we immediately get
    \begin{equation}\label{est 7.5}
        \begin{split}
            & \left\|\frac{w}{\|w\|_{L^p}}-v\right\|_{L^p} \\
            \ge{}& \|u-\mu v\|_{L^p} - \|\mu v-v\|_{L^p} - \|u-w\|_{L^p} - \left\|\frac{w}{\|w\|_{L^p}}-w\right\|_{L^p}\\
            \ge{}& \|u-\mu v\|_{L^p} - |\mu-1| - \big|\|w\|_{L^p} - 1\big| - \epsilon\\
            \ge{}& \|u-\mu v\|_{L^p} - 4\epsilon.
        \end{split}
    \end{equation}
    Moreover,
    \begin{equation}\label{est 7.6}
        \begin{aligned}
            1 \ge \int |v|^{p-2}v\frac{w}{\|w\|_{L^p}} ={}& \int |v|^{p-2}v_i\left(\frac{w}{\|w\|_{L^p}}-u_i\right) + \mu\\
            \ge{}& \mu - \|u-w\|_{L^p} - \left\|\frac{w}{\|w\|_{L^p}}-w\right\|_{L^p}\\
            \ge{}& 1-4\epsilon > 0,
        \end{aligned}
    \end{equation}
    if $\epsilon<1/4$. Thus, $v$ and $w$ have the same sign, and we can assume $v=v(1,\lambda)$ and $w = v(\|w\|_{L^p},\rho)$. Now we have
    \begin{align*}
        \int |v|^{p-2}v\frac{w}{\|w\|_{L^p}} ={}& C_1^p \lambda^{\frac{N(p-1)^2}{p^2}}\rho^{\frac{N(p-1)}{p^2}}\int \exp\left(-((p-1)\lambda+\rho)\frac{|x|^{p/(p-1)}}{p/(p-1)}\right) \,dx\\
        ={}& C_1^p \left(\frac{\lambda^{\frac{p-1}{p}}\rho^{\frac 1p}}{(1-\frac 1p)\lambda + \frac 1p \rho}\right)^{\frac{N(p-1)}{p}} \int \exp\left(-(p-1)|x|^{p/(p-1)}\right)\,dx \\
        ={}& \left(\frac{\lambda^{\frac{p-1}{p}}\rho^{\frac 1p}}{(1-\frac 1p)\lambda + \frac 1p \rho}\right)^{\frac{N(p-1)}{p}} = \left(\frac{t^{1/p}}{1-1/p + t/p}\right)^{\frac{N(p-1)}{p}},
    \end{align*}
    where $t = \rho/\lambda$. By \eqref{est 7.6},
    \begin{equation}\label{est 7.7}
        0 \le 1 - \left(\frac{t^{1/p}}{1-1/p + t/p}\right)^{\frac{N(p-1)}{p}} \le 4\epsilon.
    \end{equation}
    It is easy to see $|t-1|\le 1/2$ as long as $\epsilon \ll 1$. Since
    \begin{equation*}
        \lim_{x\to 1} \frac{1-x^q}{1-x} = \lim_{x\to 1} \frac{-qx^{q-1}}{-1} = q>0,\quad \forall\, q\in \R_+,
    \end{equation*}
    \eqref{est 7.7} implies
    \begin{equation}\label{est 7.8}
        1- \frac{t^{1/p}}{1-1/p + t/p} \lesssim \epsilon.
    \end{equation}
    Finally, we calculate the distance of $\frac{w}{\|w\|_{L^p}}$ and $v$:
    \begin{align*}
        \left\|\frac{w}{\|w\|_{L^p}}-v\right\|_{L^p}^p ={}& C_1^p \int \left|\rho^{\frac{N(p-1)}{p^2}}e^{-\rho\frac{|x|^{p/(p-1)}}{p/(p-1)}}-\lambda^{\frac{N(p-1)}{p^2}}e^{-\lambda\frac{|x|^{p/(p-1)}}{p/(p-1)}}\right|^p\,dx \\
        ={}& C_1^p \int \left|t^{\frac{N(p-1)}{p^2}}e^{-t\frac{|x|^{p/(p-1)}}{p/(p-1)}}-e^{-\frac{|x|^{p/(p-1)}}{p/(p-1)}}\right|^p\,dx\\
        ={}& C_1^p \int_{\mathbb{S}^{N-1}}\int_0^\infty e^{-(p-1)r^{p/(p-1)}}r^{N-1} \left|1-t^{\frac{N(p-1)}{p^2}}e^{(1-t)\frac{r^{p/(p-1)}}{p/(p-1)}}\right|^p\,drd\theta\\
        ={}& C_1^p V(\mathbb{S}^{N-1}) \int_0^\infty e^{-(p-1)r^{p/(p-1)}}r^{N-1}|f_r(1)-f_r(t)|^p\,dx,
    \end{align*}
    where
    \begin{equation}
        f_r(s) = s^{\frac{N(p-1)}{p^2}}e^{(1-s)\frac{r^{p/(p-1)}}{p/(p-1)}}.
    \end{equation}
    For $|s-1|\le 1/2$, we have
    \begin{align*}
        f_r'(s) ={}& f_r(s) \left(\frac{N(p-1)}{p^2s} - \frac{r^{p/(p-1)}}{p/(p-1)}\right)\\
        \implies |f_r'(s)| \le{}& \left(\frac{2N(p-1)}{p^2} + \frac{r^{p/(p-1)}}{p/(p-1)}\right)|f_r(s)|\\
        \le{}& 2^{\frac{N(p-1)}{p^2}} \left(\frac{2N(p-1)}{p^2} + \frac{r^{p/(p-1)}}{p/(p-1)}\right)\exp\left(\frac 12 \frac{r^{p/(p-1)}}{p/(p-1)}\right),
    \end{align*}
    and so
    \begin{equation}
        |f_r(1)-f_r(t)|\le |1-t|\sup_{s\in[1,t]}|f_r'(s)| \lesssim |1-t|(1+r^{p/(p-1)})\exp\left(\frac 12 \frac{r^{p/(p-1)}}{p/(p-1)}\right).
    \end{equation}
    Hence, when $\epsilon \ll 1$, we have
    \begin{equation}\label{est 7.10}
        \begin{split}
            \left\|\frac{w}{\|w\|_{L^p}}-v\right\|_{L^p}^p \lesssim{}& |1-t|^p \int_0^\infty e^{-\frac{p-1}{2}r^{p/(p-1)}} r^{N-1} (1+r^{p/(p-1)})^p\,dr\\
            ={}& O(|1-t|^p).
        \end{split}
    \end{equation}
    Since
    \begin{equation*}
        \lim_{x\to 1}\frac{1-\frac 1p + \frac xp - x^{\frac 1p}}{(1-x)^2} = \lim_{x\to 1} \frac{\frac 1p\left(1 - x^{\frac 1p -1}\right)}{-2(1-x)} = \lim_{x\to 1}\frac{\frac 1p \left(1-\frac 1p\right) x^{\frac 1p-2}}{2} = \frac{1}{2p}\left(1-\frac 1p\right) >0,
    \end{equation*}
    \eqref{est 7.8} implies
    \begin{equation}\label{est 7.11}
        \epsilon \gtrsim 1-\frac{t^{1/p}}{1-1/p + t/p} = O(|1-t|^2) \implies |1-t|\lesssim \epsilon^{1/2}.
    \end{equation}
    Taking \eqref{est 7.10} and \eqref{est 7.11} into \eqref{est 7.5}, we get
    \begin{equation*}
        \begin{split}
            & |1-t| \gtrsim \left\|\frac{w}{\|w\|_{L^p}}-v\right\|_p \gtrsim \|u-\mu v\|_{L^p}-4\epsilon \\
            \implies{}& \|u-\mu v\|_{L^p} \lesssim 4\epsilon + |1-t| \lesssim \epsilon + \epsilon^{1/2} \lesssim \epsilon^{1/2},
        \end{split}
    \end{equation*}
    and we complete our proof.
\end{proof}

\begin{proof}[Proof of Theroem \ref{plus version}: the case $p>2$]
    We first prove it for $(a,b) = (-\frac{1}{p-1},0)$.
    If $\tilde{\delta}(u) \ge p\delta_2$, then
    \begin{equation*}
        \inf_{v\in \mathcal{M}} \frac{ \|u-v\|_{H^p}^p + (p-1)\|u-v\|_{L^p_{-1/(p-1)}}^p }{\|u\|_{H^p}^p+(p-1)\|u\|_{L^p_{-1/(p-1)}}^p} \le 1 \lesssim \tilde{\delta}(u)^{1/p},
    \end{equation*}
    and we are done. Next, assume $\tilde{\delta}(u)<p\delta_2$, then $\delta(u) < \delta_2$, and $P_u\neq\varnothing$. Thus, we can pick $v\in P_u$, $\mu = \int |v|^{p-2}vu\ge 0$. Since $\|v\|_{L^p}=1$, we may assume $v=v(1,\lambda)$. By the maximality of $v$,
    \begin{align*}
        0 ={}& \frac{d}{d\lambda} \int |v(1,\lambda)|^{p-2}v(1,\lambda)u\\
        ={}& (p-1) \int |v|^{p-2}u\partial_\lambda v \\
        ={}& \frac{N(p-1)^2}{p^2\lambda}\int |v|^{p-2}uv - \frac{(p-1)^2}{p}\int |x|^{\frac{p}{p-1}} |v|^{p-2} uv.
    \end{align*}
    Thus,
    \begin{equation}\label{a norm}
        \int |x|^{\frac{p}{p-1}}|v|^{p-2}uv = \frac{p}{(p-1)^2}\cdot \frac{N(p-1)^2}{p^2\lambda}\mu = \frac{N\mu}{p\lambda} = \frac{S\mu}{\lambda}.
    \end{equation}
    By \eqref{eq}, for any $v\in \mathcal{M}_{p,a,b}$ and $u\in H_b^p\cap L_a^p$, we have
    \begin{equation}\label{weak equation of minimizer}
        \int |x|^{-pb}|\nabla v|^{p-2}\nabla v\cdot \nabla u + (p-1)\lambda^p\int |x|^{-pa}|v|^{p-2}uv - Sp\lambda^{p-1}\int |x|^{-pc}|v|^{p-2}uv = 0.
    \end{equation}
    By \eqref{weak equation of minimizer},
    \begin{equation}\label{b norm}
        \begin{split}
            \int |\nabla v|^{p-2}\nabla v \cdot \nabla u ={}& Sp\lambda^{p-1}\int |v|^{p-2}uv - (p-1)\lambda^p\int|x|^{\frac{p}{p-1}}|v|^{p-2}uv \\
            ={}& Sp\lambda^{p-1}\mu - (p-1)\lambda^p \frac{S\mu}{\lambda} = S\lambda^{p-1}\mu.
        \end{split}
    \end{equation}
    Note that
    \begin{equation*}
        \|v\|_{L^p_{-1/(p-1)}} = (S/\lambda)^{1/p}, \quad \|\nabla v\|_{L^p} = \lambda(S/\lambda)^{1/p}.
    \end{equation*}
    Write $\bar v = \mu v$, then the following orthogonality conditions hold:
    \begin{equation}\label{orth-111}
        \begin{aligned}
            & \int |\bar v|^{p-2}\bar v(u-\bar v) = \mu^{p-1}\int |v|^{p-2}vu - \mu^p \|v\|_{L^p}^p = 0, \\
            & \int |x|^{\frac{p}{p-1}}|\bar v|^{p-2}\bar{v}(u-\bar v) = \mu^{p-1}\int |x|^{\frac{p}{p-1}}|v|^{p-2}vu - \mu^p \|v\|_{L^p_{-1/(p-1)}}^p = 0,\\
            & \int |\nabla \bar v|^{p-2}\nabla\bar v\cdot(\nabla u-\nabla \bar v) = \mu^{p-1}\int |\nabla v|^{p-2}\nabla v\cdot\nabla u - \mu^p \|\nabla v\|_{L^p}^p = 0.
        \end{aligned}
    \end{equation}
    Applying \eqref{primary} with $(y,z)=(\nabla \bar v,\nabla u-\nabla \bar v)$ and integrate over $\R^N$, we find that
    \begin{equation}\label{111}
        \begin{aligned}
            \|\nabla u\|_{L^p}^p \ge{}& \|\nabla \bar v\|_{L^p}^p + p \int |\nabla \bar v|^{p-2}\nabla \bar v\cdot(\nabla u-\nabla \bar v) \\
            & + \frac{1-\kappa}{2}\bigg(\int p|\nabla \bar v|^{p-2}|\nabla u-\nabla \bar v|^2 \\
            & \qquad + p(p-2) \int |w(\nabla\bar v,\nabla u)|^{p-2}(|\nabla\bar v|-|\nabla u|)^2 \bigg) \\
            & + c_0 \|\nabla u-\nabla\bar v\|_{L^p}^p \\
            \ge{}& \|\nabla\bar v\|_{L^p}^p + c_0 \|\nabla u-\nabla\bar v\|_{L^p}^p.
        \end{aligned}
    \end{equation}
    Similarly, taking $(y,z)=(|x|^{\frac{1}{p-1}}\bar v, |x|^{\frac{1}{p-1}}(u-\bar v))$ and integrating over $\R^N$, we obtain
    \begin{equation}\label{222}
        \|u\|_{L^p_{-1/(p-1)}}^p \ge \|\bar v\|_{L^p_{-1/(p-1)}}^p + c_0 \|u-\bar v\|_{L^p_{-1/(p-1)}}^p.
    \end{equation}
    Lastly, taking $(a,b)=(\bar v,u-\bar v)$ and integrating over $\R^N$, then use H\"{o}lder's inequality, it holds
    \begin{equation}\label{333}
        \begin{aligned}
            \|u\|_{L^p}^p \le{}& \|\bar v\|_{L^p}^p + p \int |\bar v|^{p-2}\bar v(u-\bar v) \\
            & + \left(\frac{p(p-1)}{2} + \kappa \right)\int |\bar v|^{p-2}|u-\bar v|^2 + c_1\|u-\bar v\|_{L^p}^p \\
            \le{}& \|\bar v\|_{L^p}^p + \left(\frac{p(p-1)}{2} + \kappa \right)\|\bar v\|_{L^p}^{p-2}\|u-\bar v\|_{L^p}^2+c_1\|u-\bar v\|_{L^p}^p.
        \end{aligned}
    \end{equation}
  Now  \eqref{111}, \eqref{222} and \eqref{333}  imply  that
    \begin{align*}
        & \|\nabla u\|_{L^p}^p + (p-1)\|u\|_{L^p_{-1/(p-1)}}^p -Sp\|u\|_{L^p}^p \\
        \ge{}& \|\nabla \bar v\|_{L^p}^p + (p-1)\|\bar v\|_{L^p_{-1/(p-1)}}^p - Sp\|\bar v\|_{L^p}^p \\
        & + c_0 \Big( \|\nabla u-\nabla \bar v\|_{L^p}^p +(p-1)\|u-\bar v\|_{L^p_{-1/(p-1)}}^p \Big) \\
        & - \left(\frac{p(p-1)}{2} + \kappa \right)\|\bar v\|_{L^p}^{p-2}\|u-\bar v\|_{L^p}^2 - c_1\|u-\bar v\|_{L^p}^p.
    \end{align*}
    Since
    \[ \|\bar v\|_{L^p}^p = \int |\bar v|^{p-2}\bar v u \le \|\bar v\|_{L^p}^{p-1}\|u\|_{L^p} \implies \|\bar v\|_{L^p} \le \|u\|_{L^p}, \]
    and by Proposition \ref{pro},
    \[ \|u-\bar v\|_{L^p} \lesssim \delta(u)^{1/(2p)}\|u\|_{L^p},\]
    we finally obtain
    \begin{align*}
        \frac{\|\nabla u-\nabla \bar v\|_{L^p}^p +(p-1)\|u-\bar v\|_{L^p_{-1/(p-1)}}^p}{\|\nabla u\|_{L^p}^p + (p-1) \|u\|_{L^p_{-1/(p-1)}}^p} \le{}& \frac{\|\nabla u-\nabla \bar v\|_{L^p}^p +(p-1)\|u-\bar v\|_{L^p_{-1/(p-1)}}^p}{Sp\|u\|_{L^p}^p}
        \\
        \lesssim{}& \tilde{\delta}(u) - \tilde{\delta}(\bar v) + \delta(u)^{1/p} + \delta(u)^{1/2}\\
        \le{}& \tilde{\delta}(u) + \delta(u)^{1/p} + \delta(u)^{1/2}
        \\
        \lesssim{}& \tilde{\delta}(u)^{1/p},
    \end{align*}
    and we get \eqref{sum 1} with $(a,b,c) = (-\frac{1}{p-1},0,0)$.

    By our former discussions, for general $(a,b)$ it holds
    \[ \tilde{\delta}_{p,a,b}(u) \gtrsim \tilde{\delta}_{p,-1/(p-1),0}(u_1),\]and \eqref{sum 1} can be deduced directly from the above result.
\end{proof}

Finally, similar to the proof of Theorem \ref{main p=2}, we can deduce Theorem \ref{main} directly from Theorem \ref{plus version}:
\begin{proof}[Proof of Theorem \ref{main}]
    Set $\lambda = \frac{\|u\|_{H^p_b}}{\|u\|_{L^p_a}}$ and let $\Tilde{u}(x) = u(\lambda^{-\frac{1}{b+1-a}} x)$, then $\|\Tilde{u}\|_{H^p_b} = \|\tilde{u}\|_{L^p_a}$, and $\tilde{\delta}_{p,a,b}(\tilde{u}) = p\delta_{p,a,b}(u)$. For any $v\in \mathcal{M}_{p,a,b}$,
    \begin{align*}
        \frac{\|\tilde{u}-v\|_{H_b^p}^p + (p-1)\|\tilde{u}-v\|_{L^p_a}^p}{\|\tilde{u}\|_{H^p_b}^p + (p-1)\|\tilde{u}\|_{L^p_a}^p} ={}& \frac{\lambda^{-p+1}\|u-\tilde{v}\|_{H^p_b}^p + (p-1)\lambda \|u-\tilde{v}\|_{L^p_a}^p}{p\|\tilde{u}\|_{H^p_b}\|\tilde{u}\|_{L^p_a}^{p-1}} \\
        \ge{}& \frac{\|u-\tilde{v}\|_{H^p_b}\|u-\tilde{v}\|_{L^p_a}^{p-1}}{\|u\|_{H_b^p}\|u\|_{L^p_a}^{p-1}},
    \end{align*}
    where $\tilde{v}(x) = v(\lambda^{-\frac{1}{b+1-a}}x)$. By the symmetry of $\mathcal{M}_{p,a,b}$, we know $\tilde{v}\in\mathcal{M}_{p,a,b}$ and hence, applying \eqref{sum 1} to $\tilde{u}$ we obtain
    \[ \inf_{v\in\mathcal{M}_{p,a,b}} \frac{\|u-v\|_{H^p_b}\|u-v\|_{L^p_a}^{p-1}}{\|u\|_{H^p_b}\|u\|_{L^p_a}^{p-1}} \lesssim \tilde{\delta}_{p,a,b}(\tilde{u})^{1/p} \lesssim \delta_{p,a,b}(u)^{1/p}. \qedhere\]
\end{proof}

\section{Stabilities for the second-order CKN inequalities}
To prove the weak stability result, we first state a weighted $L^p$-Poincar\'{e} inequality, which was used in Do \cite{Do2023}:
\begin{lemma}[{\cite[Corollary 4.1]{Do2023}}]\label{Lemma Poincare}
    For some $m>0$, $N-p>\mu \ge 0$, $\gamma \ge \frac{N-p-\mu}{N-p}$ and $\tilde{\lambda}>0$, we have for $v\in C_0^\infty(\R^N\backslash \{0\})$ that
    \[ \tilde{\lambda}^{\frac{N\mu}{N-p}-p-\mu} \int_{\R^N}\frac{|\nabla v(y)|^p}{|y|^\mu}e^{-m|\tilde{\lambda} y|^\gamma}\,dy \ge C(N,p,\gamma,m,\mu)\inf_c \int_{\R^N} \frac{|v(y)-c|^p}{|y|^{\frac{N\mu}{N-p}}}e^{-m|\tilde{\lambda} y|^\gamma}\,dy.\]
\end{lemma}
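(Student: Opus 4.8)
The plan is to reduce the inequality, through two changes of variable, to a Poincar\'e inequality on $\R^N$ for the measure $e^{-m|z|^\beta}\,dz$ with some $\beta\ge1$, and then to prove that reduced inequality by a splitting argument together with a one–dimensional weighted Hardy inequality; the role of the hypothesis $\gamma\ge\frac{N-p-\mu}{N-p}$ is precisely to force $\beta\ge1$.

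First, by scaling one may take $\tilde\lambda=1$: replacing $v(y)$ by $v(\tilde\lambda^{-1}y)$ is a bijection of $C_0^\infty(\R^N\setminus\{0\})$, and a direct computation shows both sides of the claimed inequality acquire the \emph{same} factor $\tilde\lambda^{\frac{N\mu}{N-p}-N}$. Next, set $l=\frac{N-p-\mu}{N-p}$, which lies in $(0,1]$ because $N-p>\mu\ge0$, and write $v(r\theta)=w(r^l\theta)$ with $r=|y|$, $\theta=y/|y|$; this is again a bijection of $C_0^\infty(\R^N\setminus\{0\})$. Since $l\le1$ one has the pointwise bound $|\nabla v|^p\ge l^p r^{p(l-1)}|\nabla w|^p$ (only the angular part is discarded, and it has the favourable sign), while the right-hand side involves no derivative; performing the change of variables $z=|y|^{l-1}y$ (so $|z|=|y|^l$) one checks that all powers of $|z|$ cancel exactly, leaving
\[ \int_{\R^N}\frac{|\nabla v|^p}{|y|^\mu}e^{-m|y|^\gamma}\,dy\ \ge\ l^{\,p-1}\!\int_{\R^N}|\nabla w|^pe^{-m|z|^\beta}\,dz,\qquad \int_{\R^N}\frac{|v-c|^p}{|y|^{\frac{N\mu}{N-p}}}e^{-m|y|^\gamma}\,dy\ =\ \frac1l\!\int_{\R^N}|w-c|^pe^{-m|z|^\beta}\,dz, \]
where $\beta:=\gamma/l\ge1$ by the hypothesis on $\gamma$. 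Thus it suffices to find $C_0=C_0(N,p,\beta,m)>0$ with $\int_{\R^N}|\nabla w|^pe^{-m|z|^\beta}\,dz\ge C_0\inf_{c\in\R}\int_{\R^N}|w-c|^pe^{-m|z|^\beta}\,dz$, after which the lemma follows with $C=l^pC_0$.

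To prove the reduced inequality, write $B_R=\{|z|<R\}$, take $c$ to be the average of $w$ over $B_{3/2}$, and split $\R^N=B_{3/2}\cup(\R^N\setminus B_1)$. On $B_{3/2}$ the weight is comparable to a positive constant, so the classical $L^p$ Poincar\'e (Poincar\'e–Wirtinger) inequality on the ball gives $\int_{B_{3/2}}|w-c|^pe^{-m|z|^\beta}\lesssim\int_{B_{3/2}}|\nabla w|^p\lesssim\int_{\R^N}|\nabla w|^pe^{-m|z|^\beta}$. On $\{|z|\ge1\}$ one splits $w(z)-c=\big(w(z)-w(z/|z|)\big)+\big(w(z/|z|)-c\big)$: integrating the second term in the radial direction against $\rho^{N-1}e^{-m\rho^\beta}$ reduces it to bounding $\int_{\mathbb{S}^{N-1}}|w(\theta)-c|^p\,d\theta$, which is done by comparing $w(\theta)$ with $w(r\theta)$ for $r\in(1,\tfrac32)$ via the fundamental theorem of calculus and, again, the ball Poincar\'e inequality; the first term is controlled, for a.e.\ ray $\theta$ and then integrated in $\theta$, by the one–dimensional weighted Hardy inequality
\[ \int_1^\infty\Big|\int_1^\rho\partial_rw(r\theta)\,dr\Big|^p\rho^{N-1}e^{-m\rho^\beta}\,d\rho\ \le\ C\int_1^\infty|\partial_rw(r\theta)|^pr^{N-1}e^{-mr^\beta}\,dr\ \le\ C\int_1^\infty|\nabla w(r\theta)|^pr^{N-1}e^{-mr^\beta}\,dr. \]
Adding the pieces gives the reduced inequality with a constant depending only on $N,p,\beta,m$.

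The main obstacle is the last display. Its validity is exactly Muckenhoupt's criterion
\[ \sup_{\rho>1}\Big(\int_\rho^\infty r^{N-1}e^{-mr^\beta}\,dr\Big)^{1/p}\Big(\int_1^\rho\big(r^{N-1}e^{-mr^\beta}\big)^{-1/(p-1)}\,dr\Big)^{(p-1)/p}<\infty, \]
and a short integration-by-parts asymptotic analysis shows the first factor is $\asymp\rho^{N-\beta}e^{-m\rho^\beta}$, the second $\asymp\rho^{\,1-\beta-\frac{N-1}{p-1}}e^{\frac{m}{p-1}\rho^\beta}$, so that the exponentials cancel and the product behaves like a constant times $\rho^{\,1-\beta}$, which stays bounded precisely because $\beta\ge1$, i.e.\ precisely when $\gamma\ge\frac{N-p-\mu}{N-p}$. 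This is where the structural hypothesis on $\gamma$ enters. The remaining points — the exact cancellation of the $|z|$-powers in the change of variables of Step 2, and stitching the exterior estimate so that one single constant $c$ works — are routine bookkeeping.
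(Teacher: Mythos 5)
Your proposal is correct, but note that the paper itself does not prove this lemma at all: it is quoted verbatim from \cite[Corollary 4.1]{Do2023}, so there is no in-paper argument to compare against, and your write-up amounts to a self-contained alternative to the reference. Your route is sound: the scaling step to $\tilde\lambda=1$ is exact (both sides pick up the common factor $\tilde\lambda^{N-\frac{N\mu}{N-p}}$); the substitution $v(r\theta)=w(r^l\theta)$ with $l=\frac{N-p-\mu}{N-p}$ is the same Horiuchi/Lam--Lu transform the authors themselves use in Section 3 for the first-order inequality, and the power cancellations you call routine do check out exactly ($p(l-1)+N-\mu=Nl$ and $N-\frac{N\mu}{N-p}=Nl$, giving the clean measure $\rho^{N-1}e^{-m\rho^{\gamma/l}}\,d\rho$ with $\beta=\gamma/l\ge1$ precisely under the hypothesis on $\gamma$, and the constants $l^{p-1}$, $1/l$, hence $C=l^pC_0$); the pointwise bound $|\nabla v|^p\ge l^pr^{p(l-1)}|\nabla w|^p$ uses $l\le1$ correctly. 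The reduced Poincar\'e inequality for $e^{-m|z|^\beta}\,dz$, $\beta\ge1$, via the interior Poincar\'e--Wirtinger estimate, the sphere-comparison for the constant, and the radial two-weight Hardy inequality is standard and your Muckenhoupt computation is right: the first factor is $\asymp\rho^{N-\beta}e^{-m\rho^\beta}$, the second $\asymp\rho^{1-\beta-\frac{N-1}{p-1}}e^{\frac{m}{p-1}\rho^\beta}$, the exponentials cancel and the product is $\asymp\rho^{1-\beta}$, bounded exactly when $\beta\ge1$. In \cite{Do2023} the corollary is instead derived from their own weighted Poincar\'e machinery; what your argument buys is an elementary, essentially one-dimensional proof whose only inputs are the classical Poincar\'e--Wirtinger inequality on a ball and the Muckenhoupt criterion, at the cost of non-explicit constants, which is all the present paper needs.
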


\begin{proof}[Proof of Theorem \ref{main I - 2nd order}]
    For $p \ge 2$, there exists $m_p\in(0,1)$ such that
    \[ \mathcal{R}_p(s,t) \ge m_p|s-t|^p.\]
    From \eqref{identity-I of 2nd order}, when $pb+(p-1)N<0$ (i.e., $\epsilon=-1$), it holds that
    \begin{equation*}
        \begin{aligned}
            & \|\Delta u\|_{L_b^p}\|u\|_{H_a^p}^{p-1} - K_{p,a,b}\|u\|_{H_c^p}^p \\
           &   \ge{} \frac{m_p}{p}\int \frac{1}{|x|^{pb}}\left|\lambda^{1/p}|x|^{b-a-1}x\cdot\nabla u + \lambda^{-\frac{p-1}{p}\Delta u}\right|^p\,dx \\
             & ={} \frac{m_p}{p}\int \frac{\lambda^{1-p}}{|x|^{pb}} e^{-\frac{p\lambda|x|^{b-a+1}}{b-a+1}}\left|\nabla \cdot\left(e^{\frac{\lambda|x|^{b-a+1}}{b-a+1}}\nabla u\right)\right|^p\,dx.
        \end{aligned}
    \end{equation*}
    When $u$ is radial, set $w(r) = r^{-1}e^{\frac{\lambda r^{b-a+1}}{b-a+1}}u'(r)$, the divergence term becomes
    \[ \nabla\cdot(w(r)x) = rw'(r) + Nw(r) = r^{1-N}(r^Nw(r))'. \]
    Thus, we obtain
    \begin{equation*}
        \begin{aligned}
            &\|\Delta u\|_{L_b^p}\|u\|_{H_a^p}^{p-1} - K_{p,a,b}\|u\|_{H_c^p}^p \\
             & \ge{} \frac{m_p}{p}\int\frac{\lambda^{1-p}}{|x|^{pb+p(N-1)}}e^{-\frac{p\lambda|x|^{b-a+1}}{b-a+1}}\Big|\nabla(|x|^Nw(|x|))\Big|^p\,dx.
        \end{aligned}
    \end{equation*}
    Hence, by setting $\mu = pb+p(N-1)$, $\gamma = b-a+1$, $m = \frac{p}{b-a+1}$ and $\tilde{\lambda} = \lambda^{\frac{1}{b-a+1}}$ in Lemma \ref{Lemma Poincare}, it follows that
    \begin{equation*}
        \begin{aligned}
            &\|\Delta u\|_{L_b^p}\|u\|_{H_a^p}^{p-1} - K_{p,a,b}\|u\|_{H_c^p}^p \\
            \ge{}& C(N,p,a,b)\inf_c \int \frac{1}{|x|^{\frac{N\mu}{N-p}}}e^{-\frac{p\lambda|x|^{b-a+1}}{b-a+1}}\Big||x|^Nw(|x|)-c\Big|^p\,dx \\
            ={}& C(N,p,a,b)\inf_c \int \frac{1}{|x|^{\frac{N\mu}{N-p}+(N-1)p}}\Big|\nabla u(x)- c|x|^{1-N}e^{-\frac{\lambda|x|^{b-a+1}}{b-a+1}}\frac{x}{|x|}\Big|^p\,dx \\
            ={}& C(N,p,a,b)\inf_c \int \frac{1}{|x|^{\frac{N\mu}{N-p}+(N-1)p}}|\nabla u - \nabla v(c,\lambda)|^p\,dx.
        \end{aligned}
    \end{equation*}
    By our conditons for $(p,a,b)$, we have $\frac{N\mu}{N-p}+(N-1)p = pc$, and the proof is done.
\end{proof}
\begin{remark}
    The conditions for $(p,a,b)$ in Theorem \ref{main I - 2nd order} are set such that $(\mu,\gamma,m,\tilde{\lambda})$ meet the requirement of Lemma \ref{Lemma Poincare}.
\end{remark}

The process from weak stability to strong is similar to Section 3. Let $Q_u$ be the set of $v\in\mathcal{M}_{p,a,b}^2$ attaining
\[ \sup_{\stackrel{w\in \mathcal{M}^2_{p,a,b}}{\|w\|_{H_c^p}=1}} \int |\nabla w|^{p-2}\nabla w\cdot \nabla u. \]
Since
\[ \nabla v(k,\lambda) = -C_3k\lambda^\beta |x|^{1-N}e^{-\frac{\lambda|x|^{b-a+1}}{b-a+1}}\frac{x}{|x|}\]
and $\beta=\frac{K_{p,a,b}}{b-a+1}>0$, using the methods in Proposition \ref{min} and \ref{inf and max prop}, we can show
\begin{proposition}
    There exists $\sigma_0>0$ such that for any $u\in H_{b,a}^{p,p}$ with $\sigma_{p,a,b}(u)<\sigma_0$, the following  infimum and supremum are attainable:
    \begin{equation}
        \inf_{w\in \mathcal{M}_{p,a,b}^2} \|u-w\|_{H^p_c}, \quad \sup_{ \stackrel{w\in \mathcal{M}_{p,a,b}^2}{\|w\|_{H^p_c}=1} } \int |\nabla w|^{p-2}\nabla u\cdot \nabla w.
    \end{equation}
\end{proposition}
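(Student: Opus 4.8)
The plan is to transcribe, almost verbatim, the proofs of Propositions~\ref{min} and \ref{inf and max prop} to the second-order setting. The crucial observation is that $\|u-w\|_{H_c^p}$, the constraint $\|w\|_{H_c^p}=1$, and the supremum functional all depend on $u$ and $w$ only through $\nabla u$ and $\nabla w$; and for radial $u$ one has $\nabla u=u'(|x|)\tfrac{x}{|x|}$, a radial scalar profile times $\tfrac{x}{|x|}$ --- exactly the shape of
\[
\nabla v(k,\lambda)=-C_3\,k\,\lambda^{\beta}\,|x|^{1-N}\exp\!\Big(-\tfrac{\lambda|x|^{b-a+1}}{b-a+1}\Big)\frac{x}{|x|},\qquad \beta=\tfrac{K_{p,a,b}}{b-a+1}>0 .
\]
Consequently $\bigl(\|u-w\|_{H_c^p},\ \|v(k,\lambda)\|_{H_c^p}=|k|\bigr)$ plays here the role of $\bigl(\|u-w\|_{L_c^p},\ \|v(k,\lambda)\|_{L_c^p}=|k|\bigr)$ in Section~\ref{section 2}, and $|\nabla v(k,\lambda)|=C_3|k|\lambda^{\beta}|x|^{1-N}e^{-\lambda|x|^{b-a+1}/(b-a+1)}$ obeys bounds of the same type as those used there: for $\lambda$ small it is $\lesssim|k|\lambda^{\beta}|x|^{1-N}$, and for $\lambda$ large, picking an integer $m>\beta$, it is $\lesssim|k|\lambda^{\beta-m}|x|^{1-N-m(b-a+1)}$; on any fixed annulus $\{\epsilon\le|x|\le A\}$ it is, in both regimes, dominated by a fixed constant and tends to $0$ pointwise. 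The situation mirrors case~(2) of Proposition~\ref{min}, since $|\nabla v(k,\lambda)|$ carries the singular prefactor $|x|^{1-N}$ at the origin.

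For the infimum I would pick a minimizing sequence $v_i=v(k_i,\lambda_i)$. Theorem~\ref{main I - 2nd order} gives $\lim_i\|u-v_i\|_{H_c^p}=\inf_{w\in\mathcal{M}_{p,a,b}^2}\|u-w\|_{H_c^p}\lesssim\sigma_{p,a,b}(u)^{1/p}\|u\|_{H_c^p}$, so $\limsup_i|k_i|=\limsup_i\|v_i\|_{H_c^p}\le\|u\|_{H_c^p}+\inf_w\|u-w\|_{H_c^p}$ is bounded and $k_i\to k_0$ along a subsequence. One then rules out $\lambda_i\to0$ and $\lambda_i\to\infty$ exactly as in Proposition~\ref{min}: in either regime the bounds above give $|x|^{-c}|\nabla v_i(x)|\to0$ for a.e.\ $x$ and uniform domination on each annulus, so dominated convergence forces $\liminf_i\|u-v_i\|_{H_c^p}^p\ge\|u\|_{H_c^p(\{\epsilon\le|x|\le A\})}^p$, and choosing the annulus to carry more than half of $\|u\|_{H_c^p}^p$ contradicts $\sigma_0\ll1$. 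Hence $\lambda_i\to\lambda_0\in(0,\infty)$; with $0<t\le\lambda_i\le T$ the family $\{|x|^{-c}|\nabla v_i|\}$ has a global $L^p$ dominating function (super-exponential decay at infinity, and the power $|x|^{-c+1-N}$ near $0$), so $\|u-v_i\|_{H_c^p}\to\|u-v(k_0,\lambda_0)\|_{H_c^p}$ and the infimum is attained.

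For the supremum, set $\mu^{\ast}=\sup_{\|w\|_{H_c^p}=1}\int|x|^{-pc}|\nabla w|^{p-2}\nabla w\cdot\nabla u$. If $\mu^{\ast}=0$ it is attained by $v(\pm1,\lambda)$ with the right sign. Assuming $\mu^{\ast}>0$, take a maximizing sequence $v_i=v(k_i,\lambda_i)$ with $k_i\in\{1,-1\}$; since $\|v_i\|_{H_c^p}=1$, the fields $|x|^{-c(p-1)}|\nabla v_i|^{p-2}\nabla v_i$ are bounded in $L^{p'}(\R^N;\R^N)$, hence converge weakly (along a subsequence) to some $f$, and testing against $|x|^{-c}\nabla u\in L^p$ --- legitimate because $u\in H_{b,a}^{p,p}\subset H_c^p$ by the second-order CKN inequality --- gives $\int f\cdot(|x|^{-c}\nabla u)=\mu^{\ast}>0$. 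If $\lambda_i\to0$ or $\infty$ along a subsequence, then $\nabla v_i\to0$ a.e.\ with local domination, so testing $f$ against $\varphi\in C_c^\infty(\R^N\setminus\{0\};\R^N)$ forces $f\equiv0$, contradicting $\mu^{\ast}>0$; thus $\lambda_i\to\lambda_0\in(0,\infty)$ and $k_i$ is eventually constant. Then $\nabla v_i\to\nabla v(k_0,\lambda_0)$ pointwise, which together with the uniform $L^{p'}$ bound identifies $f=|x|^{-c(p-1)}|\nabla v(k_0,\lambda_0)|^{p-2}\nabla v(k_0,\lambda_0)$, whence $\int|x|^{-pc}|\nabla v(k_0,\lambda_0)|^{p-2}\nabla v(k_0,\lambda_0)\cdot\nabla u=\mu^{\ast}$ and the supremum is attained.

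The only input beyond a transcription of Section~\ref{section 2} is the integrability that powers the dominated-convergence steps, namely $|x|^{-c}\cdot|x|^{1-N}\in L^p(\{|x|\le1\})$; a direct computation (using $pc=(p-1)a+b+1$ and the relation $b+(p-1)a+1=\tfrac{Npb}{N-p}+\tfrac{p^2(N-1)}{N-p}$) shows this holds precisely when $b<\tfrac{N(1-p)}{p}$, which is part of the hypothesis of Theorem~\ref{main I - 2nd order}. One must also keep the whole argument within the class of radial functions, so that $\nabla u$ and every $\nabla v(k,\lambda)$ share the form $(\text{radial scalar})\cdot\tfrac{x}{|x|}$ --- this is exactly what makes the reduction to the first-order proofs exact. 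I expect the main obstacle to be purely organizational: running the two regimes $\lambda_i\to0$ and $\lambda_i\to\infty$ with the correct dominating functions near $0$ and near $\infty$ in parallel, as in Proposition~\ref{min}.
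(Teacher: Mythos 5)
Your proposal is correct and follows exactly the route the paper intends: the paper only sketches this proposition by noting $\nabla v(k,\lambda)=-C_3k\lambda^{\beta}|x|^{1-N}e^{-\lambda|x|^{b-a+1}/(b-a+1)}\frac{x}{|x|}$ with $\beta>0$ and invoking the methods of Propositions \ref{min} and \ref{inf and max prop}, which is precisely the transcription you carry out. Your verification that the hypothesis $b<\frac{N(1-p)}{p}$ is exactly what makes $|x|^{-c+1-N}$ $p$-integrable near the origin supplies the one computation the paper leaves implicit, so nothing is missing.
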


Next, we can check each step in Proposition \ref{pro} and obtain
\begin{proposition}
    There exist $\sigma_1\le \sigma_0$ and $C_4>0$ depending only on $N,p,a,b$ s.t. for any $u\in H^{p,p}_{b,a}$ with $\sigma_{p,a,b}(u) < \sigma_1$, we have
    \begin{equation}
        \sup_{v\in Q_u}\frac{\|u-\mu_v v\|_{H^p_c}}{\|u\|_{H^p_c}}\le C_4 \left(\inf_{w\in\mathcal{M}_{p,a,b}^2}\frac{\|u-w\|_{H^p_c}}{\|u\|_{H^p_c}}\right)^{1/2} \lesssim \sigma_{p,a,b}(u)^{1/(2p)},
    \end{equation}
    where for $v\in Q_u$, $\mu_v = \int |\nabla v|^{p-2}\nabla v\cdot\nabla u \ge 0$.
\end{proposition}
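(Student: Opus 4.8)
The plan is to transcribe the proof of Proposition \ref{pro} almost verbatim, with $\|\cdot\|_{H^p_c}$ in place of the $L^p$-norm, gradients in place of scalar values, and $\mathcal{M}^2_{p,a,b}$ in place of $\mathcal{M}$; the one genuinely new ingredient is the closed form of the pairings between gradients of functions in $\mathcal{M}^2_{p,a,b}$. By the scaling invariance of both sides of the claimed inequality and of $\sigma_{p,a,b}$ under $u\mapsto\tau u$, I may normalize $\|u\|_{H^p_c}=1$. Fix a (radial, as throughout this section) $u$ with $\sigma_{p,a,b}(u)<\sigma_1$, where $\sigma_1\le\sigma_0$ will be shrunk finitely many times. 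By the attainability result just stated there is $v\in Q_u$, with $\mu_v=\int|x|^{-pc}|\nabla v|^{p-2}\nabla v\cdot\nabla u\in[0,1]$, and there is $w\in\mathcal{M}^2_{p,a,b}$ attaining $\inf_{z\in\mathcal{M}^2_{p,a,b}}\|u-z\|_{H^p_c}$; the weak stability Theorem \ref{main I - 2nd order} together with \eqref{ckn002} gives $\epsilon\coloneqq\|u-w\|_{H^p_c}\lesssim\sigma_{p,a,b}(u)^{1/p}\ll1$.

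Next I would reproduce the soft estimates \eqref{est 7.2}--\eqref{est 7.6}. Since $\|u\|_{H^p_c}=1$ one has $\bigl|\|w\|_{H^p_c}-1\bigr|\le\epsilon$; testing the maximality of $v$ against $w/\|w\|_{H^p_c}$ and applying H\"older in the weighted space gives $\|w\|_{H^p_c}-\epsilon\le\mu_v\le1$, hence $0\le1-\mu_v\le2\epsilon$; a triangle-inequality chain then yields
\[ \left\| \frac{w}{\|w\|_{H^p_c}}-v \right\|_{H^p_c}\ \ge\ \|u-\mu_v v\|_{H^p_c}-4\epsilon, \]
and, for $\epsilon$ small, $\int|x|^{-pc}|\nabla v|^{p-2}\nabla v\cdot\nabla\bigl(w/\|w\|_{H^p_c}\bigr)\ge1-4\epsilon>0$, so $v$ and $w$ carry the same sign. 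Hence I may write $v=v(1,\lambda)$ and $w=v(\|w\|_{H^p_c},\rho)$, so that $w/\|w\|_{H^p_c}=v(1,\rho)$; set $t=\rho/\lambda$.

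The crux is the closed-form evaluation
\[ \int_{\R^N}|x|^{-pc}\,|\nabla v(1,\lambda)|^{p-2}\,\nabla v(1,\lambda)\cdot\nabla v(1,\rho)\,dx\ =\ \Bigl(\frac{t^{1/p}}{\,1-\tfrac1p+\tfrac tp\,}\Bigr)^{q},\qquad q=\frac{p\,K_{p,a,b}}{b-a+1}. \]
Using $\nabla v(k,\lambda)=-C_3k\lambda^\beta|x|^{1-N}\exp\!\bigl(-\tfrac{\lambda|x|^{b-a+1}}{b-a+1}\bigr)\tfrac{x}{|x|}$, the integrand equals $C_3^p\lambda^{\beta(p-1)}\rho^{\beta}\,|x|^{(1-N)p-pc}\exp\!\bigl(-\tfrac{((p-1)\lambda+\rho)|x|^{b-a+1}}{b-a+1}\bigr)$; passing to polar coordinates and substituting $s=\tfrac{((p-1)\lambda+\rho)r^{b-a+1}}{b-a+1}$ produces $\lambda^{\beta(p-1)}\rho^{\beta}\bigl((p-1)\lambda+\rho\bigr)^{-(A+1)/(b-a+1)}$ times a $\Gamma$-factor, with $A=(1-N)p-pc+N-1$. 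The identity $A+1=pK_{p,a,b}$, which follows at once from $pc=b+(p-1)a+1$ and amounts to $\tfrac{A+1}{b-a+1}=p\beta$ (as $\beta=\tfrac{K_{p,a,b}}{b-a+1}$), collapses the three exponents into the stated form, the constant $C_3$ being exactly the one making the value $1$ at $t=1$; moreover $q>0$ because the standing hypotheses force $K_{p,a,b}=\tfrac{N(N(1-p)-pb)}{p(N-p)}>0$ (here $b<\tfrac{N(1-p)}{p}$ is used) and $b-a+1>0$. I expect this evaluation, and keeping the exponent bookkeeping consistent with the conventions of Section \ref{section 2}, to be the only real obstacle; everything around it is mechanical.

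Granting the identity, the rest copies \eqref{est 7.7}--\eqref{est 7.11}: from $0\le1-\bigl(\tfrac{t^{1/p}}{1-1/p+t/p}\bigr)^q\le4\epsilon$ one gets $|t-1|\le\tfrac12$ and then $1-\tfrac{t^{1/p}}{1-1/p+t/p}\lesssim\epsilon$; since $x\mapsto\tfrac{x^{1/p}}{1-1/p+x/p}$ has a second-order zero at $x=1$ with positive leading coefficient $\tfrac1{2p}\bigl(1-\tfrac1p\bigr)$, this improves to $|1-t|\lesssim\epsilon^{1/2}$. On the other hand, the mean value theorem applied to $f_r(s)=s^\beta\exp\!\bigl((1-s)\tfrac{r^{b-a+1}}{b-a+1}\bigr)$ on $|s-1|\le\tfrac12$ gives $\|v(1,\rho)-v(1,\lambda)\|_{H^p_c}^p\lesssim|1-t|^p$. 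Combining this with the triangle lower bound from the second paragraph, $\|u-\mu_v v\|_{H^p_c}\lesssim\epsilon+|1-t|\lesssim\epsilon^{1/2}\lesssim\sigma_{p,a,b}(u)^{1/(2p)}$. Taking the supremum over $v\in Q_u$, recalling $\epsilon=\inf_{z\in\mathcal{M}^2_{p,a,b}}\|u-z\|_{H^p_c}$, and undoing the normalization $\|u\|_{H^p_c}=1$ produces both displayed inequalities with $C_4=C_4(N,p,a,b)$, which completes the plan.
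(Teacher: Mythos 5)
Your proposal is correct and follows exactly the route the paper intends: the paper's "proof" of this proposition is literally the remark that one can check each step of Proposition \ref{pro} with gradients, the weight $|x|^{-pc}$, and $\mathcal{M}^2_{p,a,b}$ in place of the first-order objects, which is what you do. The one genuinely new ingredient you supply — the closed-form pairing $\int |x|^{-pc}|\nabla v(1,\lambda)|^{p-2}\nabla v(1,\lambda)\cdot\nabla v(1,\rho)\,dx=\bigl(\tfrac{t^{1/p}}{1-1/p+t/p}\bigr)^{p\beta}$ with $p\beta=\tfrac{pK_{p,a,b}}{b-a+1}>0$ (using $A+1=pK_{p,a,b}$ and $b<\tfrac{N(1-p)}{p}$) — checks out and is precisely what makes the transcription of \eqref{est 7.7}--\eqref{est 7.11} go through.
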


By \eqref{eq2} and the maximality property of $Q_u$, for $v\in Q_u$, $\bar v \coloneqq \mu_v v$ satisfies the following orthogonality conditions:
\begin{equation}\label{2nd orth-111}
    \begin{aligned}
        & \int \frac{1}{|x|^{pc}}|\nabla \bar v|^{p-2}\nabla \bar v\cdot (\nabla u-\nabla \bar v) = 0, \\
        & \int \frac{1}{|x|^{pa}} |\nabla \bar v|^{p-2}\nabla \bar{v}\cdot (\nabla u-\nabla \bar v) = 0,\\
        & \int \frac{1}{|x|^{pb}}|\Delta \bar v|^{p-2}\Delta\bar v(\Delta u-\Delta \bar v) = 0.
    \end{aligned}
\end{equation}
\begin{proof}[Proof of Theorem \ref{main II - 2nd order}]
    When $p=2$, pick $v\in \mathcal{M}_{2,a,b}^2$ that attains $\inf_{w\in \mathcal{M}_{2,a,b}^2}\|u-w\|_{H_c^p}$, we can show $v$ satisfy \eqref{2nd orth-111} and the proof is similar to that of \eqref{result}.

    When $p>2$, pick $v\in Q_u$ and let $\mu \coloneqq \mu_v = \int |\nabla v|^{p-2}\nabla v\cdot \nabla u$. By Lemma \ref{primary},
    \begin{align*}
        \|\Delta u\|_{L_b^p}^p \ge{}& \|\Delta \bar v \|_{L_b^p}^p + c_0\|\Delta u-\Delta \bar v\|_{L_b^p}^p, \\
        \|u\|_{H_a^p}^p \ge{}& \|\bar v\|_{H_a^p}^p + c_0\|u-\bar v\|_{H_a^p}^p,\\
        \|u\|_{H_c^p}^p \le{}& \|\bar v\|_{H_c^p}^p + \left(\frac{p(p-1)}{2}+ \kappa\right)\int |\nabla \bar v|^{p-2}|\nabla \bar v-\nabla u|^2 + c_1\|u-\bar v\|_{H_c^p}^p.
    \end{align*}
    As a result,
    \begin{equation}\label{4.4}
        \begin{aligned}
            & \| \Delta u\|_{L_b^p}^p + (p-1)\|u\|_{H_a^p}^p
        -pK_{p,a,b}\|u\|_{H_c^p}^p\\
            \ge{}& \|\Delta \bar v\|_{L_b^p}^p + (p-1)\|\bar v\|_{H_a^p}^p-pK_{p,a,b}\|\bar v\|_{H_c^p}^p\\
            & + c_0\Big( \|\Delta u-\Delta \bar v\|_{L_b^p}^p + (p-1)\|u-\bar v\|_{H_a^p}^p \Big)\\
            & -\left(\frac{p(p-1)}{2}+\kappa\right)\|\bar v\|_{H_c^p}^{p-2}\|u-\bar v\|_{H_c^p}^2 - c_1\|u-\bar v\|_{H_c^p}^p.
        \end{aligned}
    \end{equation}
    Define
    \[ \tilde{\sigma}(u)\coloneqq \frac{\|\Delta u\|_{L_b^p}^p + (p-1)\|u\|_{H_a^p}^p}{\|u\|_{H_c^p}^p} - pK_{p,a,b},\]
    then $\tilde{\sigma}(u)\ge p\sigma_{p,a,b}(u)\ge 0$, and \eqref{4.4} indicates
    \begin{equation*}
        \frac{\|\Delta u-\Delta \bar v\|_{L_b^p}^p + (p-1)\|u-\bar v\|_{H_a^p}^p}{\|u\|_{H_c^p}^p} \lesssim \tilde{\sigma}(u) + \sigma_{p,a,b}(u)^{1/2}+\sigma_{p,a,b}(u)^{1/p}\lesssim \tilde{\sigma}(u)^{1/p}.
    \end{equation*}
    Hence, the proof is done by using the transformation as in the proof of Theorem \ref{main}.
\end{proof}
\begin{remark}
    When $u$ is not radial, it is hard to get the weak stability as in Theorem \ref{main I - 2nd order}, but once it is done, we can deduce a strong-form result by using our methods.
\end{remark}



\end{document}